\documentclass[a4paper,10pt]{amsart}

\usepackage{graphicx}

\usepackage{mathrsfs}
\usepackage{amssymb,amsmath, amsfonts, amsthm}
\usepackage{latexsym}
\usepackage{color} 
\usepackage[dvips]{epsfig}
\usepackage{graphicx}

\oddsidemargin=0.2in

 \evensidemargin=0.1in
\baselineskip=16pt

  \addtolength{\textwidth}{1cm}
  \addtolength{\textheight}{2.2cm}
  \addtolength{\topmargin}{-0.7cm}
  \setlength{\textwidth}{150mm}
 \setlength{\textheight}{220mm}

\newtheorem{defi}{Definition}[section]
\newtheorem{theorem}[defi]{Theorem}
\newtheorem{lemma}[defi]{Lemma}

\newtheorem{proposition}[defi]{Proposition}
\newtheorem{remark}[defi]{Remark}

\makeatletter\makeatother

\begin{document}

\author[Isabella Ianni]{Isabella Ianni}
\author[Giusi Vaira]{Giusi Vaira}
\date{}
\title[Non radial sign-changing solutions]{Non-radial sign-changing solutions for the Schr\"{o}dinger-Poisson problem in the semiclassical limit}
\address{Isabella Ianni\\  Dipartimento di Matematica e Fisica\\  Seconda Universit\`a degli Studi di Napoli\\ viale Lincoln 5, 81100 Caserta (Italy)}
\email{isabella.ianni@unina2.it}

\address{Giusi Vaira\\  Dipartimento di Scienze di Base e Applicate per l'Ingegneria, sezione di Matematica\\ Universit\`a La Sapienza di Roma\\ Via A. Scarpa 14, 00185 Roma (Italy)}
\email{giusi.vaira@sbai.uniroma1.it}

\keywords{Schr{\"o}dinger-Poisson problem, semiclassical limit, cluster solutions, sign-changing solutions, variational methods, Lyapunov-Schmidt reduction.}

\subjclass[2010]{35B40, 35J20, 35J61, 35Q40, 35Q55}

\maketitle

\begin{abstract}
We study the following system of equations known as Schr\"{o}dinger-Poisson problem
$$
\left\{
\begin{array}{lr}
-\epsilon^2\Delta v+v +\phi v=f(v)\qquad\mbox{ in }\mathbb R^N\\
-\Delta\phi =a_N v^2    \,\qquad\qquad\qquad\ \mbox{ in } \mathbb R^N\\
\phi\rightarrow 0 \ \mbox{ as }\ |x|\rightarrow +\infty
\end{array}
\right.
$$
where $\epsilon>0$ is a small parameter, $f:\mathbb R\rightarrow\mathbb R$ is given, $N\geq 3,$ $a_N$ is the surface measure of the unit sphere in $\mathbb R^N$ and the unknowns are $v, \phi:\mathbb R^N\rightarrow\mathbb R.$

We construct non-radial sign-changing multi-peak solutions in the semiclassical limit. The peaks are displaced in suitable symmetric configurations and  collapse to the same point as $\epsilon\rightarrow 0.$ The proof is based on the Lyapunov-Schmidt reduction.
\end{abstract}

\section{Introduction}
In this paper we are concerned with the existence of sign-changing solutions to the following nonlinear Schr\"{o}dinger-Poisson problem
$$
(\mathcal{SP})\qquad\qquad\qquad\qquad \left\{
\begin{array}{lr}
-\epsilon^2\Delta v+v +\phi v=f(v)\qquad\mbox{ in }\mathbb R^N\\
-\Delta\phi =a_N v^2    \qquad\qquad\qquad\,\,\,\mbox{ in } \mathbb R^N\\
\phi(x)\rightarrow 0 \ \mbox{ as }\ |x|\rightarrow +\infty
\end{array}
\right.
$$

where $\epsilon$ is a small and positive parameter,  $f:\mathbb R\rightarrow\mathbb R$ is given, $N\geq 3,$ $a_N$ is the surface measure of the unit sphere in $\mathbb R^N$ and the unknown is $(v, \phi):\mathbb R^N\times\mathbb R^N\rightarrow\mathbb R.$ \\\\
Systems like $(\mathcal{ SP})$ have been object of many investigations in the last years because of their strong physical meaning. Indeed they appear in quantum mechanics models (see e.g. \cite{5, 8, 14}) and also  in semiconductor theory \cite{BF1, BF2, 15, 16}. In \cite{BF1, BF2}, for instance, they have been introduced as  models describing solitary waves  for nonlinear stationary equations of Schr\"{o}dinger type  interacting with an electrostatic
field, and are usually known as Schr\"odinger-Poisson systems. In this context the nonlinear term $f$  simulates, as usual,  the interaction
between many particles, while the solution $\phi$ of the Poisson equation plays the role of a potential  determined by the charge of the wave function itself. From another point of view, the
interest on this problem stems also from the Slater approximation of
the exchange term in the Hartree-Fock model, see \cite{Slater}. In
this framework $f(u)=u^p$ with  $p=5/3$, however, other nonlinearities have been used
in different approximations.\\\\
In the following we look for bound states to $(\mathcal{SP})$ in the semiclassical case, namely as $\epsilon \rightarrow 0$. \\While there are many results about existence, multiplicity and behavior of positive solutions to $(\mathcal{ SP})$ (see \cite{A, AzzolliniPomponio,  DaprileMugnai1, DaprileMugnai2, DW, DaprileWei2, K,  Ruiz, R} and references therein),  little is known about the existence of solutions $(v,\phi)$ such that $v$ is sign-changing.
\\In \cite{I} the existence of solutions with $v$ nodal is established in the case $\epsilon=1,$ the solutions found are radial and $v$ has any fixed number of nodal domains. As far as we know, nothing is known about  the existence of non-radial sign-changing $v$.\\

In this paper we give an improvement in this direction. Indeed we construct, for $\epsilon$ small, non-radial solutions to $(\mathcal{SP})$ such that $v$ is nodal, moreover $v$ is multi-peak shaped and its peaks collapse all at a certain point (which we may assume to be $0$ by the invariance by translation) as $\epsilon\rightarrow 0$ (cluster nodal solutions).

We recall that D'Aprile and Wei in \cite{DW} proved  the existence of positive cluster
solutions to $(\mathcal{SP})$ as $\epsilon$ goes to zero, hence this paper completes the picture about the
existence of cluster solutions to $(\mathcal{SP}).$
\\
\\
Before stating the main results we fix the assumptions on $f$ that we will use in the sequel and we  recall some known facts.
\\
\\
(f1) $f\in C^{1+\sigma}_{loc}(\mathbb R) $ with $\sigma \in (0,1),$ $f(0)=f'(0)=0$ and $f(t)=-f(-t).$
\\
\\
(f2) the problem
\begin{equation}\label{problema limite}\left\{
\begin{array}{lr}
\Delta w -w+f(w)=0\qquad \mbox{ in }\mathbb R^N\\
w>0\,\qquad\qquad\qquad\qquad \mbox{ in }\mathbb R^N\\
\lim_{|x|\rightarrow +\infty}w(x)=0\\
w(0)=\max_{\mathbb R^N}w(x)
\end{array}\right.
\end{equation}
has a unique solution $w$ which is non degenerate, i.e. denoting by $\mathcal{L}:H^2(\mathbb R^N)\rightarrow L^2(\mathbb R^N)$ the linearized operator in $w,$
$$\mathcal{L}[u]:=\Delta u-u +f'(w)u,$$
then
$$Kernel(\mathcal{L})={\rm{span}}\left\{\frac{\partial w}{\partial x_1},\ldots,\frac{\partial w}{\partial x_N}\right\}.$$
We recall that $w$ is a critical point of the following energy functional
 $$I[w]:=\frac{1}{2}\int_{\mathbb R^N}(|\nabla w|^2+w^2)dx-\int_{\mathbb R^N} F(w)dx$$
 where $F(t)=\int_{0}^tf(s)ds.$

By the well-know result of Gidas, Ni and Nirenberg (\cite{GNN}), $w$ is radially symmetric and strictly decreasing in $r=|x|.$ Moreover, by classical regularity results, the following asymptotic behaviors hold:
\begin{equation}\label{dec}
w(r), w''(r)=A_N r^{-\frac{N-1}{2}}e^{-r}\left(1+O\left(\frac{1}{r}\right)\right),
\end{equation}
\begin{equation}
w'(r)=-A_N r^{-\frac{N-1}{2}}e^{-r}\left(1+O\left(\frac{1}{r}\right)\right),
\end{equation}
where $A_N>0$ is a suitable positive constant.

The class of nonlinearities $f$ satisfying (f1)-(f2) includes, and it's not restricted to, the homogeneous nonlinearity $f(v)=|v|^{p-1}v $ with $p\in (1, \frac{N+2}{N-2}).$
\\
In this paper the dimension $N$ is chosen in the interval $[3,6].$ Under this assumption it is well known that the system $(\mathcal{SP})$ can be reduced into a single equation. Indeed a simple application of the Lax-Milgram theorem ensures the existence of a unique solution of the second equation of $(\mathcal{SP})$, namely the following result holds:

\begin{lemma}\label{esistenzaphi} Let $N\in [3,6].$
For every $f\in L^{\frac{2N}{N+2}}(\mathbb R^N)$ there exists a unique solution $\phi[f]$ in $D^{1, 2}(\mathbb R^N)$ of the equation $-\Delta\phi=a_N f$. Moreover the following representation formula holds:
$$\phi[f](x):=\int_{\mathbb R^N} \frac{f}{|x-y|^{N-2}}dy.$$
Furthermore the functional $G:H^1(\mathbb R^N)\rightarrow \mathbb R$
$$G(u):=\int_{\mathbb R^N} \phi[u^2]u^2 dx$$ is $C^1$ and $G'(u)[v]=4\int_{\mathbb R^N}\phi[u^2]uv dx.$
\end{lemma}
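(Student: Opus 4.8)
The plan is to treat the three assertions in turn, the common thread being the Sobolev embedding $D^{1,2}(\mathbb R^N)\hookrightarrow L^{2^*}(\mathbb R^N)$ with $2^*=\frac{2N}{N-2}$ and its conjugate exponent $(2^*)'=\frac{2N}{N+2}$, which is precisely the integrability class in the hypothesis. For the existence and uniqueness of $\phi[f]$, I would apply the Lax--Milgram theorem on the Hilbert space $D^{1,2}(\mathbb R^N)$ equipped with $\langle\phi,\psi\rangle=\int_{\mathbb R^N}\nabla\phi\cdot\nabla\psi$. The bilinear form is obviously continuous and coercive, so the only point is that $\psi\mapsto a_N\int_{\mathbb R^N} f\psi$ is a bounded linear functional on $D^{1,2}$; this follows from H\"older's inequality and the Sobolev embedding, since $|\int f\psi|\le\|f\|_{(2^*)'}\|\psi\|_{2^*}\le C\|f\|_{(2^*)'}\|\nabla\psi\|_2$. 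Lax--Milgram then gives a unique $\phi[f]\in D^{1,2}$ with $\int\nabla\phi[f]\cdot\nabla\psi=a_N\int f\psi$ for all $\psi\in D^{1,2}$, i.e.\ $-\Delta\phi[f]=a_N f$ in the weak sense; uniqueness within $D^{1,2}$ is immediate because the difference of two solutions is harmonic with finite Dirichlet energy, hence identically zero.

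For the representation formula, I would verify that the right-hand side is (a suitable constant multiple of) the Newtonian potential $\Gamma*f$, with $\Gamma(x)=c_N|x|^{2-N}$ the fundamental solution of $-\Delta$ on $\mathbb R^N$, $N\ge3$, and that it belongs to $D^{1,2}$ and solves the equation, whence by the uniqueness just established it must coincide with $\phi[f]$. Membership in $D^{1,2}$ is a consequence of the Hardy--Littlewood--Sobolev inequality: the Riesz potential $f\mapsto|x|^{2-N}*f$ maps $L^{(2^*)'}\to L^{2^*}$, while differentiating under the integral sign shows $\nabla(\Gamma*f)$ is dominated by $|x|^{1-N}*|f|$, which maps $L^{(2^*)'}\to L^2$; in both cases a short computation with the exponents gives exactly the required target space. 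That $\Gamma*f$ solves $-\Delta\phi=a_N f$ distributionally is classical, and the normalizing constant is fixed by $-\Delta\Gamma=\delta_0$ with $a_N$ the surface measure of the unit sphere.

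Turning to $G$, the crucial point is that for $N\in[3,6]$ one has $2\le\frac{4N}{N+2}\le 2^*$ (the two endpoints being checked directly), so by interpolation $H^1(\mathbb R^N)\hookrightarrow L^{\frac{4N}{N+2}}(\mathbb R^N)$, and therefore $u^2\in L^{(2^*)'}(\mathbb R^N)$ whenever $u\in H^1(\mathbb R^N)$; this is exactly where the dimensional restriction enters, and it makes $\phi[u^2]\in D^{1,2}$ well defined with $\|\phi[u^2]\|_{D^{1,2}}\lesssim\|u\|_{H^1}^2$. Consequently $|G(u)|\le\|\phi[u^2]\|_{2^*}\|u^2\|_{(2^*)'}\lesssim\|u\|_{H^1}^4$, so $G$ is well defined and bounded on bounded sets. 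For the $C^1$ statement I would compute the G\^ateaux derivative by expanding $G(u+tv)$: using that $f\mapsto\phi[f]$ is linear and that the associated double integral is symmetric in its two slots, the first-order terms combine to $4\int_{\mathbb R^N}\phi[u^2]uv\,dx$, the passage to the limit in the difference quotient being justified by dominated convergence together with the above estimates. Continuity of $u\mapsto G'(u)$ in the dual norm then follows from the continuity of $u\mapsto u^2$ from $H^1$ into $L^{(2^*)'}$, via the H\"older bound $\|u^2-u_0^2\|_{(2^*)'}\le\|u-u_0\|_{\frac{4N}{N+2}}\|u+u_0\|_{\frac{4N}{N+2}}$, composed with the bounded linear map $f\mapsto\phi[f]$; this upgrades the G\^ateaux derivative to a Fr\'echet one and yields $G\in C^1$.

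The work here is essentially bookkeeping rather than conceptual: the one step that must be carried out with care is the tracking of exponents through the Hardy--Littlewood--Sobolev and Sobolev embeddings, which simultaneously underlies the representation formula, the well-posedness of $\phi[u^2]$, and the differentiability of $G$, and which is also what pins down the range $N\in[3,6]$. No genuine obstacle is expected beyond this routine verification.
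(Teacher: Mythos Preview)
Your proposal is correct and follows exactly the approach the paper indicates: the paper does not give a detailed proof of this lemma but merely states, just before the statement, that it is ``a simple application of the Lax--Milgram theorem,'' which is precisely your first step. Your treatment of the representation formula via the Newtonian potential and Hardy--Littlewood--Sobolev, and of the $C^1$ regularity of $G$ via the embedding $H^1\hookrightarrow L^{4N/(N+2)}$ (which is where the restriction $N\in[3,6]$ enters), supplies the routine details the paper omits.
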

By Lemma \ref{esistenzaphi} we reduce to study the following nonlinear scalar equation in $H^1(\mathbb R^N)$
\begin{equation}\label{eqScalare}-\epsilon^2\Delta v+v +v \phi[v^2] =f(v)\qquad\qquad\mbox{ in }\mathbb R^N.\end{equation}

We also recall that the solutions of \eqref{eqScalare} are critical points of the $C^2$-functional $J_{\epsilon}:H^1(\mathbb R^N)\rightarrow \mathbb R$ defined as
\[J_{\epsilon}[v]=\frac{1}{2}\int_{\mathbb R^N} \left( \epsilon^2|\nabla v|^2+v^2\right)dx-\int_{\mathbb R^N} F(v)dx+\frac{1}{4}\int_{\mathbb R^N}\phi[v^2](x)v^2(x)dx
.\]

We can now state our results.
Our first theorem is about the existence of  nodal solutions whose form consists of one positive peak centered in $0$ surrounded by $k$ negative peaks located near the vertices of a regular polygon, with the number $k$ sufficiently large (see figure \ref{fig1}).

\

\begin{figure}[htbp]\label{fig1}
\centering

\includegraphics[scale=9]{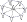}
\caption{A configuration with 1 positive peaks at the origin surrounded by 7 negative peaks.}
\end{figure}

\

\begin{theorem}\label{theorem poligono piano}
Let (f1)and (f2) hold  and let $N\in [3,6].$\\   Fix $k\geq 7$
and let $Q_1,\ldots,Q_k\in\mathbb R^2$ be the vertices of a two-dimensional convex regular polygon centered at $0.$ Then there exists $\epsilon_0>0$ such that for any $\epsilon\in (0,\epsilon_0)$, there is $r_{\epsilon}>0$ and a sign-changing solution $v_{\epsilon}\in H^1(\mathbb R^N)$ to \eqref{eqScalare} of the form

$$v_{\epsilon}(x)=w\left(\frac{x}{\epsilon}\right)-\sum_{i=1}^kw\left(\frac{x- {P_i}_{\epsilon}}{\epsilon}\right)+h.o.t.,\ \mbox{ as $\epsilon\rightarrow 0$}$$
uniformly for $x\in \mathbb R^N$.

Here ${P_i}_{\epsilon}:=(r_{\epsilon}Q_i,{\bf 0})\in\mathbb R^N,$ $i=1,\ldots,k$ and $r_{\epsilon}\rightarrow 0$ as $\epsilon\rightarrow 0.$ Moreover

$$\lim_{\epsilon\rightarrow 0}\frac{r_{\epsilon}}{\epsilon\log{\frac{1}{\epsilon^2}}}=C$$
for some $C>0.$
\end{theorem}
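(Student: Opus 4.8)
The plan is to use the Lyapunov--Schmidt reduction, following the general scheme of D'Aprile--Wei for positive cluster solutions, but adapted to the sign-changing setting. First I would rescale: setting $x = \epsilon z$, equation \eqref{eqScalare} becomes $-\Delta u + u + \epsilon^2 \phi[u^2] u = f(u)$ after adjusting the nonlocal term appropriately, and the configuration points become $\xi_i = P_{i,\epsilon}/\epsilon = (r_\epsilon Q_i / \epsilon, \mathbf{0})$. Writing $d_\epsilon := r_\epsilon / \epsilon$, the ansatz is the sum $W = w(\cdot) - \sum_{i=1}^k w(\cdot - \xi_i)$, and I look for a solution $u = W + \varphi$ with $\varphi$ a small remainder orthogonal (in the $H^1$ sense) to the approximate kernel, which is spanned by the translates $\partial_{x_j} w(\cdot)$ and $\partial_{x_j} w(\cdot - \xi_i)$. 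The parameter $d_\epsilon$ (equivalently the side of the polygon) is left free and will be fixed at the end by solving a reduced finite-dimensional problem.

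The steps are as follows. (1) \emph{Estimate of the error.} Compute $S[W] := -\Delta W + W + \epsilon^2 \phi[W^2] W - f(W)$ in a suitable weighted norm; because $f$ is odd and the limiting $w$ solves $\Delta w - w + f(w)=0$, the leading contribution comes from the nonlinear interaction between peaks, of order $e^{-|\xi_i - \xi_j|} = e^{-d_\epsilon |Q_i - Q_j|}$, plus the Poisson contribution of order $\epsilon^2$ (here the restriction $N \le 6$ and the choice $d_\epsilon \sim \log(1/\epsilon^2)$ ensure both terms are comparable and small). (2) \emph{Invertibility of the linearized operator.} Using the nondegeneracy hypothesis (f2), show that the linearization of $S$ at $W$, restricted to the subspace orthogonal to the approximate kernel, is uniformly invertible with a bound that deteriorates only polynomially (or logarithmically) in $\epsilon$; this is where (f2) and the exponential decay \eqref{dec} are essential, together with the fact that the points $\xi_i$ are far apart. (3) \emph{Fixed point.} A contraction mapping argument then produces, for each admissible $d_\epsilon$ in a bounded interval, a unique small $\varphi = \varphi_{\epsilon, d}$ solving the projected equation, depending smoothly on $d$. (4) \emph{Reduced problem.} Plug $u = W + \varphi$ into $J_\epsilon$ (or directly project onto the kernel directions): by symmetry — choosing $W$ and the configuration invariant under the dihedral group acting on the polygon and under rotations/reflections in the last $N-2$ coordinates — the reduced functional depends on the single scalar $d$, and one computes
\begin{equation*}
J_\epsilon[W + \varphi] = (k+1) I[w] + c_1 \sum_{i \neq j} e^{-|\xi_i - \xi_j|} - c_2 \sum_{i} e^{-|\xi_i|} + c_3\, \epsilon^2 \,(\text{Poisson terms}) + \text{h.o.t.},
\end{equation*}
with $c_1, c_2, c_3 > 0$; the key point is that the \emph{negative} peak at the origin interacts attractively (sign $-c_2$) with the outer negative... wait, the outer peaks being negative and the center positive gives an interaction of favorable sign that, balanced against the mutual repulsion of the outer peaks ($+c_1$), produces a nondegenerate critical point $d_* = d_*(\epsilon)$ of the reduced one-variable function, located at $d_* \sim \log(1/\epsilon^2)$.

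The main obstacle I expect is twofold. First, \textbf{getting the sign and the precise asymptotics of the interaction terms right}: one must expand $\int f(W)W$, $\int F(W)$ and the Poisson energy $\int \phi[W^2]W^2$ to sufficient accuracy, using \eqref{dec} and lemmas on $\int w(\cdot) w(\cdot - \xi)$-type integrals, and track how the sign pattern (one $+$, $k$ of $-$) enters each term; a wrong balance would either prevent existence or force a different scaling. Second, and more delicate, \textbf{the invertibility estimate in the sign-changing cluster regime}: unlike the positive case, the approximate solution $W$ changes sign and is not close to a single bump, so one needs a careful localization argument (splitting $\mathbb{R}^N$ into regions near each $\xi_i$ and a far region) to show the linearized operator does not develop small eigenvalues beyond those coming from the known kernel directions — the nondegeneracy (f2) controls each local piece, but patching them requires that the cross-terms, again of exponential order $e^{-cd_\epsilon}$, be dominated by the spectral gap of $\mathcal{L}$. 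Finally, verifying that the constraint $\varphi \perp$ (approximate kernel) can be removed — i.e. that a critical point of the reduced functional gives a genuine solution of \eqref{eqScalare} — is standard once the Lagrange multipliers are shown to vanish, which follows from the symmetry and the nondegeneracy of $d_*$.
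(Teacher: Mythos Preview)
Your overall scheme (Lyapunov--Schmidt reduction in a symmetric subspace, weighted-norm error estimate, invertibility of the linearized operator via (f2), contraction for $\varphi$, one-variable reduced functional) matches the paper's. The gap is in step (4), the reduced problem: you have the balance mechanism wrong, and this is precisely where the hypothesis $k\ge 7$ enters.

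You write that the critical point of the reduced functional comes from balancing the \emph{attractive} center--outer interaction (opposite signs) against the \emph{repulsive} outer--outer interaction (same signs). But for $k\ge 7$ the side of the polygon satisfies $\beta_k:=2\sin(\pi/k)<1$, i.e.\ the side is strictly shorter than the radius, so by the exponential decay \eqref{dec} one has $w(r/\epsilon)=o\bigl(w(r\beta_k/\epsilon)\bigr)$ and the center--outer interaction is \emph{negligible} compared to the outer--outer one. A balance between two exponential terms $e^{-d}$ and $e^{-\beta_k d}$ with $\beta_k\neq 1$ cannot produce an interior critical point; and in any case neither term carries the parameter $\epsilon$, so your claimed scaling $d_*\sim\log(1/\epsilon^2)$ could not arise from it. The actual mechanism in the paper is: after discarding the center--outer term, the reduced functional is (up to constants)
\[
M_\epsilon[r]\;=\;(2k+o(1))\Bigl[-\gamma_0\,w\Bigl(\tfrac{r}{\epsilon}\beta_k\Bigr)+C_k\,\epsilon^2\Bigl(\tfrac{r}{\epsilon}\beta_k\Bigr)^{-(N-2)}\Bigr]+O(\epsilon^{2\tau}),
\]
and the interior maximum comes from balancing the \emph{same-sign repulsion} $-\gamma_0 w(\rho)$ against the \emph{Poisson energy} $+C_k\epsilon^2\rho^{-(N-2)}$; the maximizer $\rho_\epsilon$ of $\alpha_{\epsilon,C_k}(\rho):=-\gamma_0 w(\rho)+C_k\epsilon^2\rho^{-(N-2)}$ satisfies $\rho_\epsilon=\log(1/\epsilon^2)+\tfrac{N-1}{2}\log\log(1/\epsilon^2)+o(\log\log(1/\epsilon^2))$, which gives the stated asymptotics for $r_\epsilon$. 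So the Poisson term is not a perturbation here but the essential counterweight, and the role of $k\ge 7$ is exactly to kill the opposite-sign interaction so that this two-term balance is clean. Your expansion and your verbal description of the mechanism need to be corrected accordingly; once that is done, the rest of your outline goes through as in the paper.
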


\

More in general we can prove the following result:
\begin{theorem}\label{theorem politopo}
Let $Q_1,\ldots,Q_k\in\mathbb R^h$ $(2\leq h\leq N)$ be the vertices of a convex regular polytope in $\mathbb R^h$ centered at $0$ and having radius $1$ and side $s.$

Assume $s\leq 1,$ $h>2$ or $s<1,$ $h=2.$
Then there exists $\epsilon_0>0$ such that for any $\epsilon\in (0,\epsilon_0)$, there is a sign-changing solution $v_{\epsilon}$ to \eqref{eqScalare} with one positive peak centered in $0$ and $k$ negative peaks centered at points ${P_i}_{\epsilon}:=(r_{\epsilon}Q_i,{\bf 0})\in\mathbb R^N,$ $i=1,\ldots,k,$ such that $r_{\epsilon}\rightarrow 0$ as $\epsilon\rightarrow  0.$ Moreover $$\lim_{\epsilon\rightarrow 0}\frac{r_{\epsilon}}{\epsilon\log{\frac{1}{\epsilon^2}}}=C$$
for some $C>0.$
\end{theorem}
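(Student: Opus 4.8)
The plan is to perform a Lyapunov--Schmidt reduction in a space of symmetric functions, in the spirit of the construction of positive clusters by D'Aprile and Wei in \cite{DW}. First I would rescale, setting $v(x)=u(x/\epsilon)$; since $\phi[v^2](x)=\epsilon^2\,\phi[u^2](x/\epsilon)$, equation \eqref{eqScalare} becomes
\begin{equation*}
-\Delta u+u+\epsilon^2\,\phi[u^2]\,u=f(u)\qquad\text{in }\mathbb R^N,
\end{equation*}
whose solutions are the critical points of $\widetilde J_\epsilon[u]=\frac12\int_{\mathbb R^N}(|\nabla u|^2+u^2)-\int_{\mathbb R^N}F(u)+\frac{\epsilon^2}{4}\int_{\mathbb R^N}\phi[u^2]u^2$; in these variables the required peaks sit at $\xi_0=0$ and $\xi_i=(\rho Q_i,{\bf 0})\in\mathbb R^N$, $i=1,\dots,k$, the single parameter $\rho:=r_\epsilon/\epsilon$ still to be chosen. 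I would take as approximate solution $W_\rho:=w-\sum_{i=1}^k w(\cdot-\xi_i)$ and work in the subspace $H^1_G(\mathbb R^N)$ of functions invariant under the finite group $G$ generated by the symmetry group of the polytope (acting on the first $h$ coordinates) together with the reflections $x_j\mapsto-x_j$, $j=h+1,\dots,N$. This group leaves $W_\rho$ and $\widetilde J_\epsilon$ invariant, it has the origin as its only fixed point (so the positive peak is automatically pinned at $0$), and the only $G$-invariant element of the approximate kernel ${\rm span}\{\partial_\alpha w(\cdot-\xi_i):0\le i\le k,\ 1\le\alpha\le N\}$ is the dilation field $Z_\rho=\partial_\rho W_\rho=\sum_{i=1}^k\nabla w(\cdot-\xi_i)\cdot\widehat{\xi_i}$; hence the reduced problem is genuinely one-dimensional. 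By Palais' principle of symmetric criticality it then suffices to produce a critical point of $\widetilde J_\epsilon$ restricted to $H^1_G$.

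The finite-dimensional reduction is by now routine. Writing $u=W_\rho+\psi$ with $\psi\in H^1_G$ orthogonal to $Z_\rho$, one estimates the error $S(W_\rho):=-\Delta W_\rho+W_\rho+\epsilon^2\phi[W_\rho^2]W_\rho-f(W_\rho)$: its local part $\sum_i\sigma_i f(w_i)-f(W_\rho)$ (with $\sigma_0=1$, $\sigma_i=-1$) is the standard multi-bump interaction error, governed by the asymptotics \eqref{dec} and hence of size $e^{-s\rho}$ up to polynomial factors (the nearest-neighbour outer distance $s\rho\le\rho$ being the smallest one present), while $\epsilon^2\phi[W_\rho^2]W_\rho$ is $O(\epsilon^2)$ because, by Lemma \ref{esistenzaphi}, $\phi[W_\rho^2]$ is bounded and decays like $|x|^{2-N}$. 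Using the non-degeneracy hypothesis (f2), the mutual distance of the $\xi_i$, and the fact that the nonlocal term perturbs the linearized operator at $W_\rho$ only by $O(\epsilon^2)$, one shows that this operator is invertible with uniformly bounded inverse on $Z_\rho^\perp\cap H^1_G$; a contraction argument then yields, for every admissible $\rho$, a unique $\psi=\psi_\rho$, smooth in $\rho$, with $\|\psi_\rho\|_{H^1}\le C(e^{-s\rho}+\epsilon^2)$ up to polynomial factors. The critical points of the reduced functional $\mathcal J_\epsilon(\rho):=\widetilde J_\epsilon[W_\rho+\psi_\rho]$ then correspond to genuine solutions.

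The core of the argument is the expansion of $\mathcal J_\epsilon$. Setting $\Upsilon(d):=\int_{\mathbb R^N}f(w)\,w(\cdot+d\,e)\,dx\sim c_N d^{-(N-1)/2}e^{-d}$ (unit vector $e$) for the bump interaction and using the monopole behaviour $\phi[w^2](x)\sim|x|^{2-N}\int_{\mathbb R^N}w^2$, one obtains
\begin{equation*}
\mathcal J_\epsilon(\rho)=(k+1)\,I[w]+\frac{k+1}{4}\,\epsilon^2\int_{\mathbb R^N}\phi[w^2]w^2+\Big(k\,\Upsilon(\rho)-\sum_{1\le i<j\le k}\Upsilon\big(\rho|Q_i-Q_j|\big)\Big)+c\,\epsilon^2\rho^{2-N}+\mathrm{l.o.t.},
\end{equation*}
with $c>0$: the $+$ in front of the centre--outer pairs and the $-$ in front of the outer--outer pairs record that unlike-sign bumps repel while like-sign bumps attract, and $\mathrm{l.o.t.}$ absorbs the sub-dominant interactions together with the $O(\|\psi_\rho\|^2)$ remainder. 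Since $s\le1$ (and $s<1$ if $h=2$), the $\rho$-dependent part is dominated by the attractive nearest-neighbour outer term, of order $\Upsilon(s\rho)$, and the repulsive Coulomb term $c\,\epsilon^2\rho^{2-N}$; here one uses the combinatorics of regular polytopes to check that the coefficient of the leading exponential is strictly negative — when $s=1$ it equals $k$ (the number of centre--outer pairs) minus the number of edges, which is strictly negative in the relevant cases and vanishes only for the hexagon (whence the restriction $s<1$ when $h=2$). Consequently $\mathcal J_\epsilon'>0$ at $\rho$ with $s\rho=\log\frac1{\epsilon^2}-K\log\log\frac1\epsilon$ and $\mathcal J_\epsilon'<0$ at $\rho$ with $s\rho=\log\frac1{\epsilon^2}+K\log\log\frac1\epsilon$ for a suitable large fixed $K$ (there the leading terms dominate all the errors), so by the intermediate value theorem $\mathcal J_\epsilon$ has a critical point $\rho_\epsilon$ with $s\rho_\epsilon=\log\frac1{\epsilon^2}+O(\log\log\frac1\epsilon)$. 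Undoing the scaling gives $v_\epsilon$ with $r_\epsilon=\epsilon\rho_\epsilon$, whence $r_\epsilon/(\epsilon\log\frac1{\epsilon^2})\to 1/s=:C>0$. I expect the main obstacle to be exactly this last part: carrying out the energy expansion with errors that are uniformly negligible in $\rho$ over a window of logarithmic width — in particular controlling the nonlocal contributions and the off-diagonal bump interactions — and verifying that the hypothesis on the side length $s$ is precisely what forces the attractive and repulsive forces to balance.
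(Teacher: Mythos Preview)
Your plan is correct and matches the paper's approach closely: symmetric Lyapunov--Schmidt reduction to a one-parameter problem in the polytope radius $\rho$, expansion of the reduced energy as a competition between the dominant nearest-neighbour outer--outer term $-Cw(s\rho)$ and the Poisson term $+C'\epsilon^2\rho^{2-N}$ (your edge-counting check when $s=1$ is exactly the paper's observation that the vertex degree $q$ satisfies $q\ge h>2$), and location of the critical scale at $s\rho\sim\log(1/\epsilon^2)$. The one methodological difference worth flagging is the final step: the paper finds an \emph{interior maximum} of the reduced functional on a closed interval by direct energy comparison (as in Proposition~\ref{proofMax}), which needs only $C^0$ control of the remainder, whereas your sign-of-$\mathcal J_\epsilon'$ argument via the intermediate value theorem would additionally require that the $\rho$-derivatives of the error terms be subdominant --- true here, but an extra estimate you would have to supply.
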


The proof of our results is based on the well known Lyapunov-Schmidt reduction procedure (see \cite{AM}). In particular, in order to deal with nodal clustered solutions, we perform the reduction in suitable symmetric settings in the spirit of \cite{DP}. \\

We outline here the main ideas.\\
First our approximate solutions are constructed as the sum (with sign) of suitably rescaled $w$ centered at distinct points $P_i\in\mathbb R^N$ such that $P_i\rightarrow 0$ as $\epsilon\rightarrow 0.$

This choice is the most natural. In fact if $v$ is a solution of \eqref{eqScalare} and $P\in\mathbb R^N$ then
$v_{\epsilon}(x):=v(\epsilon x+P)$ solves the equation $-\Delta u+u+\epsilon^2\phi[u^2]u=f(u)$ which, since $\epsilon\rightarrow 0,$ can be approximated by problem \eqref{problema limite}. Hence it's quite natural to take $v\sim \pm w(\frac{x-P}{\epsilon})$  as a solution of \eqref{eqScalare} for $\epsilon$ small.

Moreover, if we take several different fixed points $P_i\in \mathbb R^N,$ then  $v\sim\sum_i \pm w(\frac{x-P_i}{\epsilon})$ is still a good approximation of a solution $v$ of \eqref{eqScalare} for $\epsilon$ small enough, in spite of the presence of nonlinear terms in the equation. The reason is that, thanks to the exponential decay of $w,$ the interactions among peaks centered at different fixed points becomes negligible when $\epsilon\rightarrow 0$.

In our case however we are looking for clustered solutions, namely the points $P_i\rightarrow 0$ as $\epsilon\rightarrow 0$. This means that the interactions among peaks play a role.

Anyway by locating the peaks in suitable symmetric configurations, still we will be able to find a solution of the desired form.

Indeed we recall that the Lyapunov-Schmidt reduction method reduces the problem to find a critical point for a functional defined on a finite-dimensional space (reduced functional). In our case the reduced functional, up to a positive constant, has the form
\begin{eqnarray*}
M_{\epsilon}[(P_1,..., P_{\ell})]&=& \epsilon^{2}\sum_{i\neq j}\frac{1}{|\frac{P_i-P_j}{\epsilon}|^{N-2}}- \sum_{i\neq j, \lambda_i=\lambda_j}w\left(\frac{P_i-P_j}{\epsilon}\right)\\
 &&+ \sum_{i\neq j, \lambda_i=-\lambda_j}w\left(\frac{P_i-P_j}{\epsilon}\right)
+\ \  h.o.t. ,
\end{eqnarray*}
where $\lambda_i=\pm 1$ according to the sign of each peak and the unknowns $P_i$ determine the location of the peaks. We notice that it consists of three main terms: the first term depends on the Poisson potential effect, the second term is due to the interplay between the peaks of the same sign and has a repulsive effect, the third term is due to the interaction between peaks of opposite sign and has an attractive effect.
\\
Observe that the first term $\epsilon^{2}\sum_{i\neq j}\frac{1}{|\frac{P_i-P_j}{\epsilon}|^{N-2}}$ increases when the points $P_i$ are close to zero, while using the exponential decay of $w$, the interaction term  $ - \sum_{i\neq j, \lambda_i=\lambda_j}w\left(\frac{P_i-P_j}{\epsilon}\right)$ increases when the mutual distance between the points $P_i$ is big.
 Hence, if we restrict the functional to suitable symmetric configurations in which the peaks having opposite sign are kept away from each other, then the mutual interaction between opposite peaks, i.e. the third term $+\sum_{i\neq j, \lambda_i=-\lambda_j}w\left(\frac{P_i-P_j}{\epsilon}\right),$ becomes negligible and so we can easily conclude that the equilibrium is achieved for a suitable (symmetric) configuration of the points $P_i,$ which is a local maximum for the functional $M_{\epsilon},$ namely we have produced a sign-changing cluster solution for the problem $(\mathcal{SP}).$\\\\
We remark that one can find solutions with analogous symmetric configurations as in all the previous results, also in the case of the scalar Schr\"{o}dinger equation $$-\epsilon^2\Delta v+ V(|x|)v=|v|^{p-1}v\qquad x\in \mathbb R^N$$ in presence of a radially symmetric potential $V$ with a local maximum in $0$ (see also \cite{DP}).

\

\begin{center}
{\bf Notations}
\end{center}
Before going on we establish some notations.\\
Let us denote by $H^1(\mathbb R^N)$ the usual Sobolev space endowed with scalar product and norm $$(u, v)_{\epsilon}=\int_{\mathbb R^N}\left(\epsilon^2\nabla u\nabla v+uv\right)\, dx; \qquad \|u\|^2:=\int_{\mathbb R^N}\left(\epsilon^2|\nabla u|^2+u^2\right)\, dx$$ and by $D^{1, 2}(\mathbb R^N)$ the completition of the space $C^{\infty}_c(\mathbb R^N)$ with respect to the norm $$\|u\|_{D^{1, 2}}:=\left(\int_{\mathbb R^N}\epsilon^2|\nabla u|^2\, dx \right)^{1/2}.$$ Moreover let $L^p(\mathbb R^N)$ the usual Lebesgue space endowed with the norm $$|u|_p:=\left(\int_{\mathbb R^N}|u|^p\, dx\right)^{\frac{1}{p}}\,\, p\in [1, \infty)\qquad \|u\|_{\infty}=\sup_{x\in \mathbb R^N}|u(x)|.$$ In particular, let us denote by $\langle \cdot , \cdot \rangle$ the usual scalar product in $L^2(\mathbb R^N)$, namely $$\langle u, v \rangle := \int_{\mathbb R^N}uv\, dx.$$

\

\section{General setting}
The Lyapunov-Schmidt reduction will be made around an appropriate set of approximating solutions. Precisely, for any $\ell\in \mathbb N$ we define 
\begin{equation}\label{gammeepsilon}\Gamma_{\epsilon}:=\left\{{\bf P}=(P_1, \ldots, P_{\ell})\in \mathbb R^{N{\ell}}\,\, :\,\, \beta^2\epsilon\log\frac{1}{\epsilon^2}<|P_i-P_j|<\epsilon\left(\log\frac{1}{\epsilon^2}\right)^2\,\, \mbox{for}\,\, i\neq j\right\}\end{equation} where $\beta\in (\sigma, 1)$ is sufficiently close to $1$. Let ${\bf P}\in \bar{\Gamma}_{\epsilon}$ and set $w_{P_{i}}(x)=w(\frac{x-P_{i}}{\epsilon})$, $i=1, \ldots, \ell$. We look for solutions of \eqref{eqScalare} of the form \begin{equation}\label{generalansatz}v_{\epsilon}(x):=w_{\bf P}(x)+\psi_{\epsilon}(x)\end{equation} where $\psi_{\epsilon}$ will be a remainder term belonging into a suitable space and the approximating solution $w_{\bf P}$ is of the form $$w_{\bf P}(x)=\sum_{i=1}^{\ell}\lambda_{i}w_{P_{i}}(x)$$ with $\lambda_{i}=\pm 1$ according to the sign of each peak.\\\\

In particular we will reduce ourselves to symmetric configurations, finding solutions $v_{\epsilon}$ with some symmetric properties. Here we show that $\phi[v_{\epsilon}^2]$ preserves the same symmetry property. Indeed, let $G$ be a group of symmetries of $\mathbb R^N$ and let $g\in G$. For $u:\mathbb R^N\rightarrow \mathbb R$ we set
 \begin{equation}\label{action}
 (T_gu)(x)=u(gx).
 \end{equation}
 Let $$X:=\left\{u\in H^1(\mathbb R^N)\,\, :\,\, T_g u=u,\,\, g\in G \right\}$$ and $$Y:=\left\{\phi\in D^{1, 2}(\mathbb R^N)\,\,:\,\, T_g\phi=\phi, \,\, g\in G \right\}.$$ We remark that $X, Y$ are the subspace of $H^1$ and $D^{1, 2}$ respectively invariant under the action (\ref{action}).
 \begin{lemma}\label{invact}
 If $u\in X$ then $\phi[u^2]\in Y$.
 \end{lemma}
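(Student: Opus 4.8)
The plan is to exploit the explicit representation formula for $\phi[u^2]$ from Lemma \ref{esistenzaphi} together with a change of variables, using that the elements of $G$ are linear isometries of $\mathbb{R}^N$ (so in particular have Jacobian determinant of modulus $1$). First I would fix $g\in G$ and recall that, since $u\in X$, we have $u(gx)=u(x)$ for all $x$, hence $u^2(gx)=u^2(x)$ as well; thus $u^2$ is itself $G$-invariant and belongs to the appropriate Lebesgue space $L^{\frac{2N}{N+2}}(\mathbb{R}^N)$ (this is the integrability needed to invoke Lemma \ref{esistenzaphi}, and it follows from the Sobolev embedding $H^1\hookrightarrow L^{\frac{2N}{N-2}}$ applied to $u$). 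Then, writing
$$
\phi[u^2](gx)=\int_{\mathbb{R}^N}\frac{u^2(y)}{|gx-y|^{N-2}}\,dy,
$$
I would perform the substitution $y=gz$, so that $dy=dz$ (modulus of the Jacobian is $1$) and $|gx-y|=|gx-gz|=|g(x-z)|=|x-z|$ because $g$ is an isometry. This gives
$$
\phi[u^2](gx)=\int_{\mathbb{R}^N}\frac{u^2(gz)}{|x-z|^{N-2}}\,dz=\int_{\mathbb{R}^N}\frac{u^2(z)}{|x-z|^{N-2}}\,dz=\phi[u^2](x),
$$
where the middle equality uses the $G$-invariance of $u^2$ established above. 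Hence $T_g\phi[u^2]=\phi[u^2]$ for every $g\in G$.

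It remains only to check that $\phi[u^2]$ lies in $D^{1,2}(\mathbb{R}^N)$, but this is exactly the conclusion of Lemma \ref{esistenzaphi} (uniqueness and existence of $\phi[f]\in D^{1,2}$ for $f=u^2\in L^{\frac{2N}{N+2}}$), so nothing further is needed. Combining the $D^{1,2}$-membership with the invariance $T_g\phi[u^2]=\phi[u^2]$ for all $g\in G$ yields $\phi[u^2]\in Y$, which is the assertion.

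I do not expect a genuine obstacle here: the only points requiring minimal care are (i) justifying that $u^2$ has the integrability $L^{\frac{2N}{N+2}}$ required to apply the representation formula — handled by the Sobolev embedding, valid since $N\in[3,6]$ as assumed throughout — and (ii) making sure the group $G$ acts by isometries so that both the Euclidean distance and the Lebesgue measure are preserved under the change of variables; this is implicit in $G$ being "a group of symmetries of $\mathbb{R}^N$." One could alternatively argue more abstractly: $\phi[u^2]$ is the unique minimizer in $D^{1,2}$ of the strictly convex functional $\phi\mapsto \tfrac12\int|\nabla\phi|^2 - a_N\int u^2\phi$, and since this functional is invariant under $\phi\mapsto T_g\phi$ whenever $u^2$ is $G$-invariant, its unique minimizer must be fixed by each $T_g$; but the direct computation via the Newtonian potential is shorter and entirely elementary, so that is the route I would write up.
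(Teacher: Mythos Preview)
Your argument is correct but follows a different route from the paper. You work directly with the Newtonian-potential representation $\phi[u^2](x)=\int_{\mathbb{R}^N}\frac{u^2(y)}{|x-y|^{N-2}}\,dy$ and perform the change of variables $y=gz$, using that $g$ is a linear isometry. The paper instead argues at the PDE level: it observes that $-\Delta$ commutes with $T_g$, so $-\Delta(T_g\phi[u^2])=T_g(a_N u^2)=a_N u^2=-\Delta\phi[u^2]$, and then invokes the uniqueness part of Lemma~\ref{esistenzaphi} to conclude $T_g\phi[u^2]=\phi[u^2]$. Your alternative suggestion via the strictly convex energy is essentially this same uniqueness argument in variational dress. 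The paper's proof is slightly shorter and avoids checking the integrability of $u^2$ or the Jacobian computation explicitly (these are absorbed into the well-posedness statement of Lemma~\ref{esistenzaphi}); your computation, on the other hand, is entirely self-contained and makes the role of the isometry property of $g$ fully transparent.
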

 \begin{proof}
 Let $u\in X$. To prove that $\phi[u^2]\in Y$ we have to show that $\phi[u^2]$ is invariant under the action (\ref{action}). To this aim let us evaluate $$-\Delta(T_g \phi[u^2])=T_g(-\Delta \phi[u^2])=T_g(a_N u^2)=a_N u^2(gx)=a_N u^2(x)=-\Delta \phi[u^2]$$ and then, by the uniqueness of the solution, it follows that $T_g\phi[u^2]=\phi[u^2]$.
 \end{proof}
 Since we look for a solution near $w_{\bf P}$, a key step is to evaluate $\mathcal{S}(w_{{\bf P}})$
where $$\mathcal{S}(v):=\epsilon^2\Delta v -v+f(v)-\phi[v^2]v.$$ What we can prove is the following result (for the proof see for instance \cite{DW}):

\begin{lemma} \label{LemmaStimaErroreS}
Let $\beta\in (\sigma, 1).$ There exists a constant $C>0$ such that for every $\epsilon>0$ and ${\bf P}=(P_1,\ldots,P_l)\in \mathbb R^{N\ell}$ with $|P_i-P_j|\geq 2\beta^2\epsilon \log{\frac{1}{\epsilon}},$ for $i\neq j:$
$$|S_{\epsilon}[w_{{\bf P}}]|\leq  C \epsilon ^{\beta^2(\beta^2+\sigma)}\sum_{i=1}^lw_{P_i}^{1-\beta^2}.$$
\end{lemma}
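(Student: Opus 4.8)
The plan is to estimate $S_\epsilon[w_{\bf P}]$ by decomposing the error into the purely ''Schrödinger'' part, coming from the fact that a sum of translated copies of $w$ fails to solve the limiting equation \eqref{problema limite}, and the ''Poisson'' part $-\phi[w_{\bf P}^2]w_{\bf P}$, which is a genuinely new term not present in the single nonlinear Schrödinger equation. First I would rescale: writing $z=x/\epsilon$ and $\xi_i=P_i/\epsilon$, one has $w_{P_i}(x)=w(z-\xi_i)$, and since each $w(\cdot-\xi_i)$ solves $\Delta w(\cdot-\xi_i)-w(\cdot-\xi_i)+f(w(\cdot-\xi_i))=0$, the Schrödinger part of $S_\epsilon[w_{\bf P}]$ becomes purely a nonlinear interaction term,
\[
f\!\Big(\sum_{i=1}^\ell \lambda_i w(z-\xi_i)\Big)-\sum_{i=1}^\ell \lambda_i f\big(w(z-\xi_i)\big),
\]
plus the Poisson term $-\phi[w_{\bf P}^2]w_{\bf P}$ evaluated after rescaling. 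The key geometric input is the separation hypothesis $|\xi_i-\xi_j|\ge 2\beta^2\log\frac1\epsilon$, which forces $e^{-|\xi_i-\xi_j|}\le \epsilon^{2\beta^2}$, so that near any one peak $\xi_i$ the other peaks are exponentially small.

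Next I would carry out the pointwise estimate of the nonlinear interaction term, splitting $\mathbb R^N$ into the regions $B_i$ where $w(z-\xi_i)$ is the dominant summand (say $|z-\xi_i|\le \tfrac12\min_{j\ne i}|\xi_i-\xi_j|$) and the complementary region. In $B_i$, using the $C^{1+\sigma}_{loc}$ regularity of $f$, a Taylor-type expansion gives $|f(w(z-\xi_i)+R_i)-f(w(z-\xi_i))|\lesssim |f'(w(z-\xi_i))|\,|R_i|+|R_i|^{1+\sigma}$, where $R_i=\sum_{j\ne i}\lambda_j w(z-\xi_j)$ is the sum of the ''tails'' of the other peaks; one then uses $f'(0)=0$ together with the exponential decay \eqref{dec} to bound each tail $w(z-\xi_j)$ by $C\,e^{-|z-\xi_j|}\le C\,e^{-|\xi_i-\xi_j|}\,w(z-\xi_i)^{-?}$-type comparisons. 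The crucial elementary inequality here is the ''linear'' decay estimate
\[
\sum_{j\ne i} w(z-\xi_j)\ \le\ C\,\epsilon^{\beta^2(\beta^2+\sigma)}\,w(z-\xi_i)^{1-\beta^2}\qquad\text{on }B_i,
\]
which follows by comparing exponentials: $e^{-|z-\xi_j|}\le e^{-\theta|\xi_i-\xi_j|}e^{-(1-\theta)|z-\xi_i|}$ for suitable $\theta$ plus the triangle inequality, optimizing $\theta$ against the separation lower bound, and using that $w^{1-\beta^2}\sim e^{-(1-\beta^2)|z-\xi_i|}$ up to polynomial factors absorbed into $\epsilon^{\beta^2\sigma}$. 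Outside all the $B_i$, every $w(z-\xi_i)$ is comparably small and a similar but easier bound applies.

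Then I would estimate the Poisson contribution $\phi[w_{\bf P}^2]w_{\bf P}$. By Lemma \ref{esistenzaphi}, $\phi[w_{\bf P}^2](x)=\int_{\mathbb R^N}|x-y|^{-(N-2)}w_{\bf P}^2(y)\,dy$, and after rescaling $\phi[w_{\bf P}^2](\epsilon z)=\epsilon^2\int|z-\zeta|^{-(N-2)}w_{\bf P}^2(\epsilon\zeta)\,d\zeta$ with $w_{\bf P}(\epsilon\zeta)=\sum_i\lambda_i w(\zeta-\xi_i)$; since $N\le 6$, $w^2\in L^{2N/(N+2)}$ and the convolution with the Newtonian kernel is controlled, giving $\|\phi[w_{\bf P}^2]\|_\infty\le C\epsilon^2$ (the $\epsilon^2$ being the genuine small factor). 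Multiplying by $w_{\bf P}$ and again using the exponential decay to replace $w_{\bf P}$ on $B_i$ by a bounded multiple of $w(z-\xi_i)^{1-\beta^2}$ (trivially, since $\beta<1$ and $w\le w^{1-\beta^2}$ near the peak up to constants) absorbs this term into the stated bound, because $\epsilon^2\le C\epsilon^{\beta^2(\beta^2+\sigma)}$ once $\beta$ is close enough to $1$ (note $\beta^2(\beta^2+\sigma)<2$). Combining the two contributions over all $i$ yields $|S_\epsilon[w_{\bf P}]|\le C\epsilon^{\beta^2(\beta^2+\sigma)}\sum_{i=1}^\ell w_{P_i}^{1-\beta^2}$.

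The main obstacle is the bookkeeping in the nonlinear interaction estimate: one must choose the exponent $1-\beta^2$ for the comparison function and the splitting parameter $\theta$ consistently so that (i) the resulting power of $\epsilon$ is exactly $\beta^2(\beta^2+\sigma)$, matching the separation hypothesis, and (ii) the ''cross'' contributions from the $|R_i|^{1+\sigma}$ term (where the $\sigma$-Hölder continuity of $f'$ enters) produce the extra $\epsilon^{\beta^2\sigma}$ rather than something weaker. Since this is the computation carried out in \cite{DW}, I would refer to that reference for the detailed algebra once the structure above is in place, as the excerpt itself does.
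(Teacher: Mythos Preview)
The paper does not give its own proof of this lemma; it simply states the result and refers to \cite{DW}. Your proposal correctly identifies the two sources of error (the nonlinear interaction $f(\sum_i\lambda_i w_{P_i})-\sum_i\lambda_i f(w_{P_i})$ and the Poisson term $\phi[w_{\bf P}^2]w_{\bf P}$), uses the separation hypothesis and the $C^{1+\sigma}$ regularity of $f$ in the expected way, and invokes exactly the bound $\|\phi[w_{\bf P}^2]\|_\infty\le C\epsilon^2$ that the paper later records as Lemma~\ref{PoissonSolutionBounded}. This is precisely the argument of \cite{DW}, to which both you and the paper defer; one small remark is that $\epsilon^2\le \epsilon^{\beta^2(\beta^2+\sigma)}$ holds for every $\beta\in(\sigma,1)$ since $\beta^2(\beta^2+\sigma)<2$, so no further closeness of $\beta$ to $1$ is needed at that step.
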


\

\section{Energy estimates}\label{sectionReduced}
Let us fix $\ell\in\mathbb N$ and  ${\bf P}=(P_1,\ldots, P_{\ell})\in  \mathbb R^{N\ell}$. In this section we derive the following key result about the interaction among $\ell$ signed bumps displaced in $\bf P$.

\begin{proposition}\label{4.1} The following energy estimates hold as $\displaystyle{\left|\frac{P_i-P_j}{\epsilon}\right|}\rightarrow +\infty$:
\begin{equation}\label{stimaEnergiaSchrodinger}
\int_{\mathbb R^N} \left(\frac{1}{2}\left( \epsilon^2|\nabla w_{{\bf P}}|^2+w_{{\bf P}}^2\right)- F(w_{{\bf P}})\right)dx = \epsilon^N \ell I[w]-\epsilon^N (\gamma_0+o(1))\sum_{i\neq j}\lambda_i\lambda_jw\left(\frac{P_i-P_j}{\epsilon}\right),
\end{equation}
\begin{equation}\label{stimaEnergiaPoisson}
\frac{1}{4}\int_{\mathbb R^N}\!\!\int_{\mathbb R^N}\frac{w_{{\bf P}}^2(x)w_{{\bf P}}^2(y)}{|x-y|^{N-2}}dxdy=\epsilon^{N+2}C_1+\epsilon^{N+2}(C_2+o(1))\sum_{i\neq j}\frac{1}{|\frac{P_i-P_j}{\epsilon}|^{N-2}}
\end{equation}
where $\gamma_0:=\frac{1}{2}\int_{\mathbb R^N}f(w)e^{x_1}dx$
and $C_1, C_2$ are positive constants.
\end{proposition}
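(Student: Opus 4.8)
The plan is to treat the two estimates \eqref{stimaEnergiaSchrodinger} and \eqref{stimaEnergiaPoisson} separately, expanding each integral by exploiting that $w_{\bf P}=\sum_i\lambda_i w_{P_i}$ is a superposition of well-separated rescaled copies of the ground state $w$, and then using the exponential decay \eqref{dec} to control all cross terms.

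For \eqref{stimaEnergiaSchrodinger}, I would first change variables $x\mapsto \epsilon x$, which produces the global prefactor $\epsilon^N$ and reduces the left-hand side to $\int_{\mathbb R^N}\big(\tfrac12(|\nabla \tilde w|^2+\tilde w^2)-F(\tilde w)\big)\,dx$ with $\tilde w(x)=\sum_i\lambda_i w(x-\tfrac{P_i}{\epsilon})$. Expanding the quadratic part, the diagonal terms give $\ell$ copies of $\tfrac12\int(|\nabla w|^2+w^2)$ (since $\lambda_i^2=1$), and the off-diagonal terms give $\sum_{i\neq j}\lambda_i\lambda_j\int(\nabla w_i\cdot\nabla w_j+w_iw_j)$; using the equation $-\Delta w+w=f(w)$ this equals $\sum_{i\neq j}\lambda_i\lambda_j\int f(w(\cdot-\tfrac{P_i}{\epsilon}))\,w(\cdot-\tfrac{P_j}{\epsilon})$. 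The nonlinear part $\int F(\tilde w)$ is handled by a Taylor expansion of $F$ around each single bump: on a neighborhood of $P_i/\epsilon$ one writes $F(\tilde w)=F(\lambda_i w_i)+f(\lambda_i w_i)(\tilde w-\lambda_i w_i)+O(|\tilde w-\lambda_i w_i|^2)$, noting $F(\lambda_i w_i)=F(w_i)$ and $f(\lambda_i w_i)=\lambda_i f(w_i)$ by the oddness in (f1), so the leading contribution is $-\ell\int F(w)$, the first-order term cancels against part of the off-diagonal quadratic term, and one is left with a combination of interaction integrals of the type $\int f(w(\cdot-\tfrac{P_i}{\epsilon}))\,w(\cdot-\tfrac{P_j}{\epsilon})$. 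The classical asymptotic lemma on such interaction integrals (as in \cite{DW}, using \eqref{dec}) gives that each of these is $(\gamma_0+o(1))\,w\big(\tfrac{P_i-P_j}{\epsilon}\big)$ with $\gamma_0=\tfrac12\int f(w)e^{x_1}dx$, and the quadratic remainder terms are of higher order in the small quantities $w(\tfrac{P_i-P_j}{\epsilon})$; collecting everything yields \eqref{stimaEnergiaSchrodinger}.

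For \eqref{stimaEnergiaPoisson}, again rescale $x,y\mapsto\epsilon x,\epsilon y$: the kernel $|x-y|^{-(N-2)}$ scales as $\epsilon^{-(N-2)}$ and $dx\,dy$ as $\epsilon^{2N}$, producing the overall $\epsilon^{N+2}$. Then expand $w_{\bf P}^2=\sum_{i,j}\lambda_i\lambda_j w_i w_j$ (after rescaling). The ``diagonal'' contribution $\sum_i\iint \tfrac{w_i^2(x)w_i^2(y)}{|x-y|^{N-2}}$ is $\ell$ times a fixed positive constant, giving $\epsilon^{N+2}C_1$ (with $C_1=\tfrac{\ell}{4}\iint\tfrac{w^2w^2}{|x-y|^{N-2}}$ up to the sign conventions; note the signs $\lambda$ disappear because they appear squared). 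The genuinely off-diagonal contributions — terms involving $w_i w_j$ with $i\neq j$ — must be shown to be $(C_2+o(1))\sum_{i\neq j}|\tfrac{P_i-P_j}{\epsilon}|^{-(N-2)}$; the heuristic is that since $w$ is exponentially concentrated, $w^2$ behaves like a bump of unit mass near $P_i/\epsilon$, so $\iint \tfrac{w_i^2(x)w_j^2(y)}{|x-y|^{N-2}}\approx \tfrac{1}{|P_i/\epsilon-P_j/\epsilon|^{N-2}}\big(\int w^2\big)^2$ as the separation tends to infinity, with the mixed terms like $\iint\tfrac{w_i(x)w_j(x)w_k(x)w_\ell(x)}{|x-y|^{N-2}}$ being negligible. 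Making this precise requires splitting $\mathbb R^N$ into the region near $P_i/\epsilon$ and its complement and estimating each piece using \eqref{dec}.

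The main obstacle is the careful asymptotic analysis of the Coulomb-type double integrals in \eqref{stimaEnergiaPoisson}: unlike the Schr\"odinger interaction integrals, which are governed purely by the exponential tail of $w$ and are standard, the $|x-y|^{-(N-2)}$ kernel is long-range, so one must be attentive that the dominant contribution to $\iint \tfrac{w_i^2 w_j^2}{|x-y|^{N-2}}$ comes from $x$ near $P_i/\epsilon$ and $y$ near $P_j/\epsilon$ (where the kernel is essentially constant $\approx|\tfrac{P_i-P_j}{\epsilon}|^{-(N-2)}$), and that the self-interaction-type tails do not spoil the expansion — this is where the dimensional restriction $N\in[3,6]$ and the separation hypothesis $|\tfrac{P_i-P_j}{\epsilon}|\to\infty$ are used. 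The remaining steps (the rescalings, the Taylor expansion of $F$, the bookkeeping of which cross terms cancel) are routine, and the needed interaction estimates can be quoted from \cite{DW}.
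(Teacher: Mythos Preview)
Your proposal is correct and follows essentially the same route as the paper: expand the quadratic and $F$ parts separately, convert the gradient cross terms via $-\Delta w+w=f(w)$ into $\int f(w_{P_i})w_{P_j}$, control the Taylor remainder of $F$ (the paper packages this as Lemma~\ref{Lemma H(P)}, quoted from \cite{DP}), and then invoke the interaction asymptotics (Lemma~\ref{Lemma 3.1}) to obtain the $\gamma_0\,w(\tfrac{P_i-P_j}{\epsilon})$ terms. For the Poisson part the paper likewise expands $w_{\bf P}^2$ into diagonal and off-diagonal pieces; the only difference from your outline is that instead of a near/far splitting, the paper extracts the leading $|\tfrac{P_i-P_j}{\epsilon}|^{-(N-2)}$ behavior by applying the ready-made asymptotic $\Psi_\beta[g](x)=C|x|^{-\beta}+O(|x|^{-\beta-1})$ (Lemma~\ref{Lemma 3.2}) twice, and disposes of the mixed terms via the uniform bound $\phi[w_{P_h}w_{P_k}]\le C\epsilon^2$ of Lemma~\ref{PoissonSolutionBounded} --- this is cleaner than your proposed splitting but amounts to the same thing. (One small correction: the restriction $N\in[3,6]$ is not used in these estimates; it enters only through the variational framework of Lemma~\ref{esistenzaphi}.)
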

In order to prove Proposition \ref{4.1} we will need some useful lemmas that we briefly recall here.
From \cite[Lemma 3.1]{DW} one has
\begin{lemma}\label{Lemma 3.1}
For $i\neq j$
$$\int_{\mathbb R^N}f(w_{P_i})w_{P_j}=\epsilon^N w\left(\frac{P_i-P_j}{\epsilon}\right) (2\gamma_0+o(1))$$
where $\gamma_0$ is the same constant defined in Proposition \ref{4.1}.
\end{lemma}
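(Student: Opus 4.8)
The final statement to prove is Lemma~\ref{Lemma 3.1}, which asserts that for $i\neq j$,
$$\int_{\mathbb R^N}f(w_{P_i})w_{P_j}\,dx=\epsilon^N w\!\left(\frac{P_i-P_j}{\epsilon}\right)(2\gamma_0+o(1)),\qquad \gamma_0=\tfrac12\int_{\mathbb R^N}f(w)e^{x_1}\,dx,$$
as $|(P_i-P_j)/\epsilon|\to+\infty$. Here is the plan.

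\smallskip
\emph{Step 1: rescaling.} First I would change variables $x=\epsilon y+P_i$ to remove the $\epsilon$-scale. Writing $\xi_{ij}:=(P_j-P_i)/\epsilon$, which has $|\xi_{ij}|\to\infty$ by hypothesis, we get $\int_{\mathbb R^N}f(w_{P_i})w_{P_j}\,dx=\epsilon^N\int_{\mathbb R^N}f(w(y))\,w(y-\xi_{ij})\,dy$. So the lemma reduces to showing that $\int_{\mathbb R^N}f(w(y))\,w(y-\xi)\,dy=w(\xi)(2\gamma_0+o(1))$ as $|\xi|\to\infty$, with $\gamma_0$ as stated. By rotational invariance of $w$ we may rotate coordinates so that $\xi=|\xi|e_1$, hence it suffices to understand $\int_{\mathbb R^N}f(w(y))\,w(y-|\xi|e_1)\,dy$ and compare it with $w(|\xi|e_1)$.

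\smallskip
\emph{Step 2: extracting the leading asymptotics of the convolution integral.} The key point is that $f(w(y))$ decays like $e^{-2|y|}$ (since $f(t)\sim c t^p$ or more generally $f(0)=f'(0)=0$ forces at least quadratic decay, and $w$ decays like $e^{-|y|}$), so it is concentrated near the origin, whereas the factor $w(y-|\xi|e_1)$ is slowly varying on that scale. Using the asymptotic expansion \eqref{dec}, namely $w(r)=A_Nr^{-(N-1)/2}e^{-r}(1+O(1/r))$, we expand $w(|y-|\xi|e_1|)$ for $y$ in a bounded region and $|\xi|$ large: writing $r=|y-|\xi|e_1|=|\xi|-y_1+O(|y|^2/|\xi|)$, we obtain $w(|y-|\xi|e_1|)=w(|\xi|e_1)\,e^{y_1}\,(1+o(1))$ uniformly for $y$ in any fixed compact set, as $|\xi|\to\infty$. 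Substituting,
$$\int_{\mathbb R^N}f(w(y))\,w(y-|\xi|e_1)\,dy = w(|\xi|e_1)\left(\int_{\mathbb R^N}f(w(y))e^{y_1}\,dy+o(1)\right),$$
provided the tail contributions (where $y$ is not in a compact set) are negligible; combined with Step~1 and the definition $\gamma_0=\frac12\int f(w)e^{y_1}\,dy$ this gives the claimed identity, since $\int f(w)e^{y_1}=2\gamma_0$.

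\smallskip
\emph{Step 3 (the main obstacle): controlling the tails.} The delicate part is justifying that the region $\{|y|\ge R\}$ contributes only $o(1)\cdot w(|\xi|e_1)$, uniformly, so that the compact-set expansion of Step~2 is legitimate. On $\{|y|\ge R\}$ with $|y|\le |\xi|/2$, one bounds $f(w(y))\le Ce^{-2|y|}$ and $w(y-|\xi|e_1)\le Ce^{-(|\xi|-|y|)}$, so the integrand is $\le Ce^{-|\xi|}e^{-|y|}$, and integrating over $|y|\ge R$ gives something $\le \eta(R)\,e^{-|\xi|}$ with $\eta(R)\to0$; comparing with $w(|\xi|e_1)\sim A_N|\xi|^{-(N-1)/2}e^{-|\xi|}$ shows this is $O(\eta(R)|\xi|^{(N-1)/2})e^{-|\xi|}\cdot$—one must be slightly careful here and instead split at, say, $|y|\le|\xi|^{1/2}$, on which the polynomial factors are controlled, and on the far region $|y|\ge |\xi|^{1/2}$ where the exponential decay $e^{-2|y|}$ of $f(w)$ together with $w\le\|w\|_\infty$ makes the contribution super-polynomially small compared to $e^{-|\xi|}$. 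The region $|y|\ge|\xi|/2$ (where $y-|\xi|e_1$ could be small) is handled symmetrically using that there $f(w(y))\le Ce^{-2|y|}\le Ce^{-|y|}e^{-|\xi|/2}$, which again beats $w(|\xi|e_1)$. Assembling these estimates and letting first $|\xi|\to\infty$ and then $R\to\infty$ yields the result; this is precisely the computation carried out in \cite[Lemma 3.1]{DW}, which we invoke.
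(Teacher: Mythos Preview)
Your proposal is correct and follows the standard route: rescale, use the asymptotic expansion \eqref{dec} to replace $w(y-|\xi|e_1)$ by $w(|\xi|)e^{y_1}$ on compact sets, and control the tails using the (at least) quadratic decay of $f(w)$. The paper itself does not prove this lemma at all; it simply cites \cite[Lemma 3.1]{DW}, which is exactly the reference you invoke at the end, so your sketch is in fact more detailed than what the paper provides.

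One minor remark on Step~3: your first tail bound $Ce^{-|\xi|}e^{-|y|}$ is, as you noticed, off by the polynomial factor $|\xi|^{(N-1)/2}$ when compared with $w(|\xi|)$. Your proposed fix---splitting at $|y|\le|\xi|^{1/2}$---works, but an even cleaner way is to note that on $\{|y|\le |\xi|/2\}$ the full asymptotic $w(y-|\xi|e_1)\le C|\xi|^{-(N-1)/2}e^{-|\xi|}e^{|y|}$ holds (not just the crude exponential bound), so the ratio to $w(|\xi|)$ is bounded by $Ce^{|y|}$, and then $\int_{|y|\ge R}f(w(y))e^{|y|}\,dy\to 0$ as $R\to\infty$ since $|f(w(y))|\le Cw(y)^2\le Ce^{-2|y|}$ (here the assumption $f\in C^2$ with $f(0)=f'(0)=0$ is used). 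This avoids the intermediate splitting altogether.
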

Moreover in  \cite[pg. 23]{DP} it has been proved that
\begin{lemma}\label{Lemma H(P)}
Let
\[H({\bf P}):=   \int_{\mathbb R^N} [F(\sum_{i=1}^{\ell}\lambda_i w_{P_i})-\sum_{i=1}^{\ell}F(w_{P_i})-\sum_{i\neq j}\lambda_i\lambda_j f(w_{P_i})w_{P_j}]\, dx,\]
then
\[|H({\bf P})|=\epsilon^No(1)\sum_{i\neq j}w\left(\frac{P_i-P_j}{\epsilon}\right).\]
\end{lemma}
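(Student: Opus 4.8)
\textbf{Plan for the proof of Proposition \ref{4.1}.}
I would prove the two estimates separately, starting with \eqref{stimaEnergiaSchrodinger}. Expand the Schrödinger energy
$E_\epsilon[v]:=\int_{\mathbb R^N}\bigl(\tfrac12(\epsilon^2|\nabla v|^2+v^2)-F(v)\bigr)\,dx$
at $v=w_{\bf P}=\sum_i\lambda_i w_{P_i}$. The quadratic part is exactly bilinear: $\tfrac12\int(\epsilon^2|\nabla w_{\bf P}|^2+w_{\bf P}^2)=\sum_i\tfrac12\int(\epsilon^2|\nabla w_{P_i}|^2+w_{P_i}^2)+\sum_{i\neq j}\lambda_i\lambda_j\int(\epsilon^2\nabla w_{P_i}\nabla w_{P_j}+w_{P_i}w_{P_j})$. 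After rescaling $x\mapsto\epsilon x$, each diagonal term equals $\epsilon^N\cdot\tfrac12\int(|\nabla w|^2+w^2)$, and using $\Delta w-w+f(w)=0$ one rewrites the cross term as $\epsilon^N\lambda_i\lambda_j\int f(w)\,w(\cdot-\tfrac{P_i-P_j}{\epsilon})$, which by Lemma \ref{Lemma 3.1} equals $\epsilon^N\lambda_i\lambda_j\,w(\tfrac{P_i-P_j}{\epsilon})(2\gamma_0+o(1))$. For the potential part, write $F(w_{\bf P})=\sum_i F(w_{P_i})+\bigl[F(w_{\bf P})-\sum_i F(w_{P_i})\bigr]$; a Taylor expansion of the bracket gives the leading interaction $\sum_{i\neq j}\lambda_i\lambda_j f(w_{P_i})w_{P_j}$ plus the remainder $H({\bf P})$. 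Lemma \ref{Lemma H(P)} controls the latter, and the diagonal terms combine with the quadratic diagonal terms to give $\epsilon^N\ell I[w]$. Collecting: the cross terms contribute $-\epsilon^N(2\gamma_0+o(1))\sum\lambda_i\lambda_j w$ from $F$'s expansion, against $+2\gamma_0$ from… — one must be careful with signs and factors of $2$ (the $\sum_{i\neq j}$ counts ordered pairs). A clean bookkeeping shows the net coefficient is $-(\gamma_0+o(1))$, yielding \eqref{stimaEnergiaSchrodinger}.

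\textbf{The Poisson term \eqref{stimaEnergiaPoisson}.}
Here I expand $\tfrac14\int\!\!\int\frac{w_{\bf P}^2(x)w_{\bf P}^2(y)}{|x-y|^{N-2}}\,dx\,dy$. Since $\lambda_i^2=1$, we have $w_{\bf P}^2=\sum_i w_{P_i}^2+\sum_{i\neq j}\lambda_i\lambda_j w_{P_i}w_{P_j}$. Inserting this into the double integral and rescaling $x,y\mapsto\epsilon x,\epsilon y$ produces the overall factor $\epsilon^{N+2}$ (an $\epsilon^{2N}$ from the two volume elements and an $\epsilon^{-(N-2)}$ from the kernel). The "pure diagonal'' contribution $\sum_{i}\int\!\!\int\frac{w_{P_i}^2 w_{P_i}^2}{|x-y|^{N-2}}$ is $\ell$ times a fixed positive constant, giving $\epsilon^{N+2}C_1$ with $C_1=\tfrac{\ell}{4}\int\!\!\int\frac{w^2(x)w^2(y)}{|x-y|^{N-2}}$ (or absorb $\ell$ into $C_1$). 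The dominant off-diagonal piece comes from the "diagonal $\times$ cross'' and "cross $\times$ cross'' terms; the leading behavior as $|\tfrac{P_i-P_j}{\epsilon}|\to\infty$ is governed by terms of the shape $\int\!\!\int\frac{w^2(x)\,w^2(y-\tfrac{P_i-P_j}{\epsilon})}{|x-y|^{N-2}}\,dx\,dy$, which — since $w^2$ is integrable and concentrated near the origin — is asymptotic to $\bigl(\int w^2\bigr)^2\cdot\frac{1}{|\tfrac{P_i-P_j}{\epsilon}|^{N-2}}$ (convolving two localized mass distributions against the Newtonian kernel at large separation). All other off-diagonal combinations, as well as the correction to this leading term, are of strictly lower order (they involve either faster-decaying exponential factors from products $w_{P_i}w_{P_j}$ with $i\neq j$, or lower powers of the separation). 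This yields \eqref{stimaEnergiaPoisson} with $C_2=\tfrac14\bigl(\int_{\mathbb R^N}w^2\bigr)^2$.

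\textbf{Main obstacle.}
The routine-looking but genuinely delicate point is the asymptotic evaluation of the Newtonian-potential interaction $\int\!\!\int\frac{w^2(x)w^2(y)}{|\epsilon^{-1}(P_i-P_j)+x-y|^{N-2}}\,dx\,dy$: one must justify that, with $|\tfrac{P_i-P_j}{\epsilon}|\to\infty$ under the admissible range $\beta^2\epsilon\log\tfrac1{\epsilon^2}<|P_i-P_j|<\epsilon(\log\tfrac1{\epsilon^2})^2$, this is $(\int w^2)^2\,|\tfrac{P_i-P_j}{\epsilon}|^{-(N-2)}(1+o(1))$ uniformly, and that the cross-cross terms (four distinct bumps, or two bumps paired differently) are negligible by comparison. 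This requires splitting the integration region according to whether $x-y$ is comparable to the separation or not, and using the exponential decay \eqref{dec} of $w$ together with the fact that the separation is much larger than $1$ but only polylogarithmically so — the latter is why the constants $C_1,C_2$ appear cleanly and why the error is $o(1)$ relative to the stated main terms. On the Schrödinger side the analogous, easier obstacle is simply not mishandling the combinatorial factor of $2$ and the sign when passing between $F$-expansion cross terms and the quadratic cross terms; both are absorbed into the single constant $\gamma_0$ as stated.
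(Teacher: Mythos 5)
Your proposal does not prove the statement at hand. The statement to be proved is Lemma \ref{Lemma H(P)}, i.e.\ the bound $|H({\bf P})|=\epsilon^N o(1)\sum_{i\neq j}w(\frac{P_i-P_j}{\epsilon})$ for the Taylor remainder of the potential energy. What you have written is a plan for the proof of Proposition \ref{4.1}, and in the course of it you explicitly \emph{invoke} Lemma \ref{Lemma H(P)} as a known ingredient (``Lemma \ref{Lemma H(P)} controls the latter''). So the entire content of the lemma is assumed rather than established; as a proof of the lemma, nothing has been supplied. (For the record, the paper itself does not reprove the lemma either: it quotes it from \cite[pg.~23]{DP}.)

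To actually prove the lemma you would need an argument along the following lines, none of which appears in your text. Partition $\mathbb R^N$ into the regions $\Omega_{i_0}$ where $|x-P_{i_0}|=\min_i|x-P_i|$. On $\Omega_{i_0}$, Taylor-expand $F$ around $\lambda_{i_0}w_{P_{i_0}}$, using $F$ even, $f(0)=f'(0)=0$ and $f\in C^{1,\sigma}$: the zeroth- and first-order terms reproduce $F(w_{P_{i_0}})+\sum_{j\neq i_0}\lambda_{i_0}\lambda_j f(w_{P_{i_0}})w_{P_j}$ up to errors controlled by $w_{P_{i_0}}^{\sigma}\bigl(\sum_{j\neq i_0}w_{P_j}\bigr)^{1+\sigma}$ (and one must also show that the terms $F(w_{P_j})$ and $f(w_{P_i})w_{P_j}$ with $i,j\neq i_0$, which appear in the definition of $H$ but not in the expansion around $P_{i_0}$, are themselves negligible on $\Omega_{i_0}$). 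The conclusion then rests on the standard interaction estimate
\[
\int_{\mathbb R^N}w_{P_i}^{1+\sigma}\,w_{P_j}\,dx=\epsilon^N\,o(1)\,w\Bigl(\frac{P_i-P_j}{\epsilon}\Bigr),
\]
valid because $w^{1+\sigma}$ decays like $e^{-(1+\sigma)|y|}$, strictly faster than $w$ itself; this is where the hypothesis $\sigma>0$ and the decay \eqref{dec} enter, and it is the mechanism that converts the remainder into $o(1)$ times the leading interaction. Your sketch of Proposition \ref{4.1} may well be serviceable for that proposition, but it cannot be accepted as a proof of Lemma \ref{Lemma H(P)}.
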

The following result can be found in  \cite{DW}:
\begin{lemma}\label{Lemma 3.2}
For every $\beta\in\{1,\ldots, N-1\}$ and $g:\mathbb R^N\rightarrow \mathbb R$ such that $(1+|y|^{\beta+1})g\in L^1\cap L^{\infty}$ set
\[\Psi_{\beta}[g](x):=\int_{\mathbb R^N}\frac{1}{|x-y|^{\beta}}g(y)dy.\]
Then there exist two positive constants $C(\beta, g), C'(\beta, g)$ such that
\[\left |\Psi_{\beta}[g] (x)-\frac{C(\beta,g)}{|x|^{\beta}}\right|\leq \frac{C'(\beta, g)}{|x|^{\beta +1}}.\]
\end{lemma}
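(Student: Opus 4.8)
The plan is to identify the constant as $C(\beta,g):=\int_{\mathbb R^N}g(y)\,dy$ and then estimate
\[
\Psi_\beta[g](x)-\frac{C(\beta,g)}{|x|^\beta}=\int_{\mathbb R^N}\Bigl(\frac{1}{|x-y|^\beta}-\frac{1}{|x|^\beta}\Bigr)g(y)\,dy
\]
by splitting $\mathbb R^N$ into the region $A_x:=\{|y|\le |x|/2\}$ and its complement, treating $|x|$ large (say $|x|\ge 2$) and $|x|$ bounded separately. For $|x|$ in a bounded set the inequality is trivial: since $\beta\le N-1<N$ the kernel $|\cdot|^{-\beta}$ is locally integrable, so $\Psi_\beta[g]$ is bounded on $\mathbb R^N$ (convolution of an $L^1_{loc}$ function with $g\in L^1\cap L^\infty$); hence $\bigl||x|^{\beta+1}\Psi_\beta[g](x)-C(\beta,g)|x|\bigr|$ is bounded on every ball, and the estimate holds on $\{|x|\le 2\}$ after enlarging $C'(\beta,g)$.

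So assume $|x|\ge 2$. On $A_x$ every point $\xi$ of the segment joining $x$ to $x-y$ satisfies $|\xi|\ge |x|-|y|\ge |x|/2$, so the mean value theorem applied to $z\mapsto|z|^{-\beta}$ gives $\bigl||x-y|^{-\beta}-|x|^{-\beta}\bigr|\le \beta|y|(|x|/2)^{-\beta-1}$; integrating this against $|g|$ and using $|y|\le 1+|y|^{\beta+1}$ (valid since $\beta\ge1$) bounds the $A_x$ part of the integral by $C\|(1+|y|^{\beta+1})g\|_{L^1}\,|x|^{-\beta-1}$. On the complement $\{|y|>|x|/2\}$ one has $1\le (2|y|/|x|)^{\beta+1}$, hence $\int_{|y|>|x|/2}|g|\,dy\le (2/|x|)^{\beta+1}\int|y|^{\beta+1}|g|\,dy$; this immediately controls the term $|x|^{-\beta}\int_{|y|>|x|/2}|g|\,dy$ by $C\|(1+|y|^{\beta+1})g\|_{L^1}\,|x|^{-\beta-1}$.

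It remains to bound $\int_{|y|>|x|/2}|x-y|^{-\beta}|g(y)|\,dy$, which is the only place where the local singularity of the kernel enters and is the main (though still elementary) point. Split it according to $|x-y|>1$ and $|x-y|\le1$. On $\{|x-y|>1\}$ the kernel is $\le1$, so this piece is $\le\int_{|y|>|x|/2}|g|\,dy\le C\|(1+|y|^{\beta+1})g\|_{L^1}|x|^{-\beta-1}$ as above. On $\{|x-y|\le1\}$ we have $|y|\ge|x|-1\ge|x|/2$ (here $|x|\ge2$ is used), hence $|g(y)|\le\|(1+|y|^{\beta+1})g\|_{L^\infty}/|y|^{\beta+1}\le 2^{\beta+1}\|(1+|y|^{\beta+1})g\|_{L^\infty}|x|^{-\beta-1}$, so this last piece is $\le C\|(1+|y|^{\beta+1})g\|_{L^\infty}\,|x|^{-\beta-1}\int_{|z|\le1}|z|^{-\beta}\,dz$, the integral converging precisely because $\beta\le N-1<N$. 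Summing the four contributions yields the claimed bound with $C'(\beta,g)$ a fixed multiple of $\|(1+|y|^{\beta+1})g\|_{L^1}+\|(1+|y|^{\beta+1})g\|_{L^\infty}$, and $C(\beta,g)=\int g\,dy$ is positive whenever $g\ge0$ is nontrivial, which covers the applications of the lemma. The only genuine obstacle is organising the decomposition so that both the decay of $g$ and the borderline integrability $\beta<N$ of the kernel are used exactly once; there are no analytic difficulties beyond that.
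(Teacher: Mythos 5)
Your proof is correct, and the paper itself gives no argument for this lemma (it simply cites [DW]); the decomposition you use --- $C(\beta,g)=\int g$, mean value theorem on $\{|y|\le|x|/2\}$, and the decay of $g$ plus local integrability of $|z|^{-\beta}$ ($\beta\le N-1<N$) on the complement --- is exactly the standard argument behind the cited result. Your remark that $C(\beta,g)$ is positive only when, e.g., $g\ge0$ is nontrivial (as in the application $g=w^2$) correctly identifies the implicit hypothesis under which the lemma is stated.
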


Finally in order to estimate the energy term coming from the Poisson equation, we will also need the following:
\begin{lemma}\label{PoissonSolutionBounded}There exists a constant $C>0$ such that for every $P_i,P_j\in \mathbb R^N$ and every $x\in\mathbb R^N$
\begin{equation}\label{laPrima}\phi[w_{P_i}w_{P_j}](x)\leq \epsilon^2 C.\end{equation}
\begin{equation}\label{laSeconda}\phi[w_{P_i}](x)\leq \epsilon^2 C.\end{equation}
\end{lemma}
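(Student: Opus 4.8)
The idea is to reduce both inequalities to a single \emph{uniform} bound on the convolution $\Psi_{N-2}[w]$ of Lemma \ref{Lemma 3.2}, exploiting the scaling built into $w_{P_i}(x)=w(\tfrac{x-P_i}{\epsilon})$. First I would write, using the representation formula of Lemma \ref{esistenzaphi} and the change of variables $z=\tfrac{y-P_i}{\epsilon}$ (so $dy=\epsilon^N dz$ and $|x-y|^{N-2}=\epsilon^{N-2}\,\big|\tfrac{x-P_i}{\epsilon}-z\big|^{N-2}$),
$$\phi[w_{P_i}](x)=\int_{\mathbb R^N}\frac{w(\tfrac{y-P_i}{\epsilon})}{|x-y|^{N-2}}\,dy=\epsilon^{2}\int_{\mathbb R^N}\frac{w(z)}{\big|\tfrac{x-P_i}{\epsilon}-z\big|^{N-2}}\,dz=\epsilon^{2}\,\Psi_{N-2}[w]\!\left(\tfrac{x-P_i}{\epsilon}\right).$$
The point is that the factor $\epsilon^N$ from $dy$ against the $\epsilon^{N-2}$ from the kernel leaves exactly the prefactor $\epsilon^2$, with no residual dependence on $P_i$.

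Next I would establish $\sup_{\xi\in\mathbb R^N}\Psi_{N-2}[w](\xi)<\infty$. Since $N\ge 3$ we have $\beta:=N-2\in\{1,\dots,N-1\}$, and by the exponential decay \eqref{dec} of $w$ the weight condition $(1+|y|^{\beta+1})w\in L^1\cap L^\infty$ holds, so Lemma \ref{Lemma 3.2} gives $\big|\Psi_{N-2}[w](\xi)-C(N-2,w)|\xi|^{-(N-2)}\big|\le C'(N-2,w)|\xi|^{-(N-1)}$, which bounds $\Psi_{N-2}[w]$ for $|\xi|\ge 1$. For $|\xi|\le 1$ I would split $\Psi_{N-2}[w](\xi)=\int_{|\xi-z|\le 1}+\int_{|\xi-z|>1}$; the first piece is $\le \|w\|_\infty\int_{B_1}|u|^{-(N-2)}\,du<\infty$ because $N-2<N$, and the second is $\le\int_{\mathbb R^N}w=\|w\|_1<\infty$. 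Hence $\Psi_{N-2}[w]\le C$ on all of $\mathbb R^N$ for a constant depending only on $w$ and $N$, and \eqref{laSeconda} follows.

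Finally, for \eqref{laPrima} I would use $0<w_{P_j}\le\|w\|_\infty$ pointwise, so $w_{P_i}w_{P_j}\le\|w\|_\infty\,w_{P_i}$; by positivity of the Riesz kernel this yields $\phi[w_{P_i}w_{P_j}](x)\le\|w\|_\infty\,\phi[w_{P_i}](x)\le\epsilon^2 C\|w\|_\infty$, and enlarging the constant gives both inequalities with a common $C$. I do not expect a genuine obstacle here: the only things to be careful about are tracking the scaling exponent that produces the $\epsilon^2$ and invoking the local integrability of $|u|^{-(N-2)}$ near its singularity (valid precisely because $N\ge 3$), together with the observation that the resulting bound is a supremum of $\Psi_{N-2}[w]$ over all of $\mathbb R^N$, hence automatically uniform in $P_i,P_j$.
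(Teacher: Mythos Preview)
Your argument is correct, and in fact more streamlined than the paper's. The paper proves \eqref{laPrima} first, distinguishing the cases $i=j$ and $i\neq j$: for $i=j$ it scales to $\epsilon^2\phi[w^2](\tfrac{x-P_i}{\epsilon})$ and invokes Strauss' lemma (since $\phi[w^2]\in C^2\cap D^{1,2}_r$) for boundedness; for $i\neq j$ it splits $\mathbb R^N$ into the two half-spaces $\{|y-P_j|\ge\tfrac12|P_i-P_j|\}$ and $\{|y-P_i|\ge\tfrac12|P_i-P_j|\}$, extracts a factor $w\bigl(\tfrac12\tfrac{|P_i-P_j|}{\epsilon}\bigr)$ on each, and then bounds the remaining integrals by $\epsilon^2\phi[w]$, again bounded via Strauss. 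You instead prove \eqref{laSeconda} first and then obtain \eqref{laPrima} in one line from the pointwise domination $w_{P_i}w_{P_j}\le\|w\|_\infty\,w_{P_i}$, which avoids the case split entirely. Your treatment of the boundedness of $\Psi_{N-2}[w]$ via Lemma~\ref{Lemma 3.2} at infinity plus local integrability of $|u|^{-(N-2)}$ near the origin is self-contained and does not require Strauss. The only thing the paper's splitting buys is the extra small factor $w\bigl(\tfrac12\tfrac{|P_i-P_j|}{\epsilon}\bigr)$ when $i\neq j$, but that sharpening is not part of the statement of the lemma and is not used elsewhere, so your simpler route loses nothing.
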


\begin{proof}
We prove \eqref{laPrima}, the proof of \eqref{laSeconda} is similar. If $i=j$ one has
\[\phi[w_{P_i}^2](x)=\int_{\mathbb R^N} \frac{w_{P_i}^2(y)}{|x-y|^{N-2}}dy=\epsilon^2\int_{\mathbb R^N} \frac{w^2(y)}{|\frac{x-P_i}{\epsilon}-y|^{N-2}}dy=\epsilon^2\phi[w^2]\left(\frac{x-P_i}{\epsilon}\right)\]
where $x\mapsto \phi[w^2](x)$ is bounded by Strauss Lemma since it belongs to $C^2\cap D^{1,2}_r$.\\
When $i\neq j$ then
\begin{eqnarray*}
\phi[w_{P_i}w_{P_j}](x)&=&\int_{\mathbb R^N} \frac{w_{P_i}(y)w_{P_j}(y)}{|x-y|^{N-2}}dy\\
&&\hskip-2.0cm \leq w\left(\frac{1}{2}\frac{|P_i-P_j|}{\epsilon}\right)\left(\int_{\{|y-P_j|\geq\frac{1}{2}|P_i-P_j|\}}\frac{w_{P_i}(y)}{|x-y|^{N-2}}dy+\int_{\{|y-P_i|\geq\frac{1}{2}|P_i-P_j|\}}\frac{w_{P_j}(y)}{|x-y|^{N-2}}dy\right)\\
&&\hskip-2.0cm \leq \epsilon^2 w\left(\frac{1}{2}\frac{|P_i-P_j|}{\epsilon}\right)
\left(\int\frac{w(y)}{|\frac{x-P_i}{\epsilon}-y|^{N-2}}dy+\int\frac{w(y)}{|\frac{x-P_j}{\epsilon}-y|^{N-2}}dy\right)\\
&&\hskip-2.0cm = \epsilon^2 w\left(\frac{1}{2}\frac{|P_i-P_j|}{\epsilon}\right)\left(\phi[w]\left(\frac{x-P_i}{\epsilon}\right)+\phi[w]\left(\frac{x-P_j}{\epsilon}\right)\right),
\end{eqnarray*}
and $x\mapsto w(x), \phi[w](x)$ are bounded ($\phi[w]\in C^2\cap D^{1,2}_r$)).
\end{proof}

\begin{proof}[Proof of Proposition \ref{4.1}]
We first prove \eqref{stimaEnergiaSchrodinger}.

Easy computations show that
\begin{eqnarray*}
&& \hskip-0.7cm \int_{\mathbb R^N} |\nabla w_{{\bf P}}|^2\,dx+\int_{\mathbb R^N} w_{{\bf P}}^2\, dx = \int_{\mathbb R^N} |\sum_{i=1}^{\ell}\lambda_i\nabla w_{P_i}|^2\,dx+\int_{\mathbb R^N} (\sum_{i=1}^{\ell}\lambda_i w_{P_i})^2\, dx\\
&&=  \epsilon^{N-2}{\ell}\int_{\mathbb R^N} |\nabla w|^2\, dx+ \sum_{i\neq j}\lambda_i\lambda_j\int \nabla w_{P_i}\nabla w_{P_j}\,dx,+\epsilon^N{\ell}\int_{\mathbb R^N}  w^2\, dx+ \sum_{i\neq j}\lambda_i\lambda_j\int_{\mathbb R^N} w_{P_i} w_{P_j}\, dx,
\end{eqnarray*}
\begin{equation*}
\int_{\mathbb R^N} F(w_{{\bf P}})\, dx =  \int_{\mathbb R^N} F(\sum_{i=1}^{\ell}\lambda_i w_{P_i})\, dx
= \epsilon^N {\ell}\int_{\mathbb R^N} F(w)\, dx  +\sum_{i\neq j}\lambda_i\lambda_j \int_{\mathbb R^N} f(w_{P_i})w_{P_j}\, dx +H({\bf P}).
\end{equation*}
Hence, combining the previous estimates and using Lemma \ref{Lemma H(P)} we obtain \eqref{stimaEnergiaSchrodinger},
and the conclusion follows applying  Lemma \ref{Lemma 3.1}.
\\\\
Next we prove \eqref{stimaEnergiaPoisson}. An easy computation shows that
\begin{eqnarray*}
\int_{\mathbb R^N}\!\!\int_{\mathbb R^N}\frac{w_{{\bf P}}^2(x)w_{{\bf P}}^2(y)}{|x-y|^{N-2}}\,dx\,dy &=& \sum_{i=1}^{\ell}\int_{\mathbb R^N}\!\!\int_{\mathbb R^N}\frac{ w_{P_i}(x)^2w_{P_i}(y)^2}{|x-y|^{N-2}}\,dx\,dy\\
&& + \sum_{i\neq j}^{\ell}\int_{\mathbb R^N}\!\!\int_{\mathbb R^N}\frac{ w_{P_i}(x)^2w_{P_j}(y)^2}{|x-y|^{N-2}}\,dx\,dy\\
&& + \,2\sum_{i\neq j, h}^{\ell}\lambda_i\lambda_j\int_{\mathbb R^N}\!\!\int_{\mathbb R^N}\frac{ w_{P_i}(x)w_{P_j}(x)w_{P_h}(y)^2}{|x-y|^{N-2}}\,dx\,dy\\
&& + \sum_{i\neq j, h\neq k}^{\ell}\lambda_i\lambda_j\lambda_h\lambda_k\int_{\mathbb R^N}\!\!\int_{\mathbb R^N}\frac{ w_{P_i}(x)w_{P_j}(x)w_{P_h}(y)w_{P_k}(y)}{|x-y|^{N-2}}\,dx\,dy.
\end{eqnarray*}
We evaluate each term in the RHS. Indeed
\begin{eqnarray*}\int_{\mathbb R^N}\!\!\int_{\mathbb R^N}\frac{ w_{P_i}(x)^2w_{P_i}(y)^2}{|x-y|^{N-2}}\,dx\,dy=
\epsilon^{N+2}\int_{\mathbb R^N}\!\!\int_{\mathbb R^N}\frac{ w(x)^2w(y)^2}{|x-y|^{N-2}}\,dx\,dy.
\end{eqnarray*}
For $i\neq j,$ by using Lemma \ref{Lemma 3.2} twice we estimate
\begin{eqnarray*}
&& \hskip-1.3cm \int_{\mathbb R^N}\!\!\int_{\mathbb R^N}\frac{ w_{P_i}(x)^2w_{P_j}(y)^2}{|x-y|^{N-2}}\,dx\,dy  
=\epsilon^2\int_{\mathbb R^N} w_{P_i}(x)^2\Psi_{N-2}[w^2]\left(\frac{x-P_j}{\epsilon}\right)\,dx\\
&& = \epsilon^2C \int_{\mathbb R^N} w_{P_i}(x)^2\frac{1}{\left|\frac{x-P_j}{\epsilon}\right|^{N-2}}\,dx+\epsilon^2O(1)\int_{\mathbb R^N} w_{P_i}(x)^2\frac{1}{\left|\frac{x-P_j}{\epsilon}\right|^{N-1}}\,dx\\
%
&&=\epsilon^{N+2}C\Psi_{N-2}[w^2]\left( \frac{P_j-P_i}{\epsilon}\right)+\epsilon^{N+2}O(1)\Psi_{N-1}[w^2]\left( \frac{P_j-P_i}{\epsilon}\right)\\
%
&&= \epsilon^{N+2} C^2(1+o(1))\frac{1}{|\frac{P_j-P_i}{\epsilon}|^{N-2}}
\end{eqnarray*}
where $C=C(N-2, w^2)$ is the positive constant in Lemma \ref{Lemma 3.2}.\\
Finally for $i\neq j$ and $h,k\in\{1,\ldots ,{\ell}\},$ by using \eqref{laPrima} in Lemma \ref{PoissonSolutionBounded} and the exponential decay of $w$ we have
\begin{eqnarray*}&&\int_{\mathbb R^N}\!\!\int_{\mathbb R^N}\frac{ w_{P_i}(x)w_{P_j}(x)w_{P_h}(y)w_{P_k}(y)}{|x-y|^{N-2}}\,dx\,dy
=\int_{\mathbb R^N} w_{P_i}(x)w_{P_j}(x) \phi[w_{P_h}w_{P_k}](x)\,dx\\
&&\leq \epsilon^2 C \int _{\mathbb R^N}w_{P_i}(x)w_{P_j}(x)\,dx\\
&&\\
&&\leq \epsilon^2 Cw\left(\frac{1}{2}\frac{|P_i-P_j|}{\epsilon}\right)\left(\int_{\{|x-P_j|\geq\frac{1}{2}|P_i-P_j|\}} w_{P_i}(x)\, dx+\int_{\{|x-P_i|\geq\frac{1}{2}|P_i-P_j|\}} w_{P_j}(x)\, dx\right)
\\
&& \leq \epsilon^{N+2} Cw\left(\frac{1}{2}\frac{|P_i-P_j|}{\epsilon}\right)
\int_{\mathbb R^N} w(x)\,dx
\leq \epsilon^{N+2} C w\left(\frac{1}{2}\frac{|P_i-P_j|}{\epsilon}\right)
 = \epsilon^{N+2} o(1)\frac{1}{|\frac{P_i-P_j}{\epsilon}|^{N-2}}
\end{eqnarray*}
Estimate \eqref{stimaEnergiaPoisson} is obtained as a  combination of all the previous estimates.
\end{proof}

\

\section{The linearized problem}\label{section linearized}

First we need the following result based on PDE estimates.
\begin{lemma}\label{PDEest}
Let $\epsilon>0$, ${\bf P}\in \Gamma_{\epsilon}$ and $v\in C^2(\mathbb R^N)$.  There exists $\mu_0$ (independent on $\epsilon, {\bf P}$ and $v$) such that if $\mu \in (0, \mu_0)$ and
\begin{equation}\label{hip}
\begin{array}{lr}
\left|\epsilon^2\Delta v - (1+ \phi[{w_{\bf P}}^2])v\right|\leq c_0 e^{-\mu \min_{i=1, \ldots, \ell}\frac{|x-P_{i}|}{\epsilon}},\qquad \forall\,\, x\in \mathbb R^N, \\ \\
\,\,\, v(x)\rightarrow 0\,\ \mbox{as}\,\,|x|\rightarrow +\infty
\end{array}
\end{equation}
for some $c_0>0$, then
$$|v(x)|\leq 2c_0(\ell-1+e^2)e^{-\mu\min_{i=1, \ldots, \ell}\frac{|x-P_{i}|}{\epsilon}}, \qquad \forall\,\, x\in \mathbb R^N.$$
\end{lemma}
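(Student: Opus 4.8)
The plan is to prove the pointwise decay estimate via a comparison (barrier) argument, exploiting the fact that the differential operator $\epsilon^2\Delta - (1+\epsilon^2\phi[w_{\bf P}^2])$ has a non-negative zeroth order coefficient (since $\phi[w_{\bf P}^2]\geq 0$) and hence satisfies a maximum principle. First I would rescale: set $y = x/\epsilon$ and $\tilde v(y) = v(\epsilon y)$, $\tilde P_i = P_i/\epsilon$, so that the hypothesis becomes $|\Delta\tilde v - (1 + \epsilon^2\phi[w_{\bf P}^2](\epsilon y))\tilde v| \leq c_0 e^{-\mu\,\mathrm{dist}(y,\{\tilde P_i\})}$ with the coefficient $1+\epsilon^2\phi[\cdots]\geq 1$. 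The key analytic input is that, since $\mu_0$ is to be chosen small, for $\mu\in(0,\mu_0)$ the function $e^{-\mu|y-\tilde P_i|}$ is an approximate supersolution of $-\Delta + 1$ away from $\tilde P_i$: a direct computation gives $(-\Delta + 1)e^{-\mu|z|} = (1 - \mu^2 + \mu\frac{N-1}{|z|})e^{-\mu|z|} \geq (1-\mu^2)e^{-\mu|z|}$, so for $\mu_0^2 < 1/2$ this is $\geq \frac12 e^{-\mu|z|}$.

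Next I would build the barrier. Define $W(y) := 2c_0\sum_{i=1}^\ell e^{-\mu|y-\tilde P_i|} + 2c_0 e^2 e^{-\mu|y-\tilde P_1|}$ (or more symmetrically, distribute the $e^2$ factor; the precise combinatorial constant $\ell - 1 + e^2$ will come out of bookkeeping the sum). Away from the singular points $\tilde P_i$ one has $(-\Delta+1)W \geq \frac12 \cdot 2c_0\sum_i e^{-\mu|y-\tilde P_i|} \geq c_0 e^{-\mu\,\mathrm{dist}(y,\{\tilde P_j\})}$, which dominates the right-hand side of the (rescaled) hypothesis; and since the coefficient of the true operator is $\geq 1$, $W$ is also a supersolution of the true operator $-\Delta + (1+\epsilon^2\phi[w_{\bf P}^2](\epsilon y))$ away from the $\tilde P_i$. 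The issue at the points $\tilde P_i$ themselves is that $e^{-\mu|y-\tilde P_i|}$ has a conical singularity there — but it is a \emph{downward} kink, so $\Delta e^{-\mu|y-\tilde P_i|}$ has a negative distributional mass at $\tilde P_i$, which only helps: $W$ remains a distributional supersolution across those points. Then $W \pm \tilde v$ satisfies $(-\Delta + c(y))(W\pm\tilde v)\geq 0$ in the distributional sense with $c(y)\geq 1 > 0$, and $W\pm\tilde v \to 0$ at infinity (using $v(x)\to 0$); the maximum principle for $-\Delta + c$ with $c>0$ on $\mathbb R^N$ (no interior minimum can be negative, and the function vanishes at infinity) forces $W \pm \tilde v \geq 0$, i.e. $|\tilde v|\leq W$ everywhere. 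Undoing the rescaling and checking that near a given point $P_i$ one has $e^{-\mu|y-\tilde P_j|}\leq 1$ for $j\neq i$ while the $e^2$ term and the $e^{-\mu|y-\tilde P_i|}$ term are controlled, gives the stated bound $|v(x)|\leq 2c_0(\ell - 1 + e^2)e^{-\mu\min_i|x-P_i|/\epsilon}$.

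I would handle the maximum principle on the unbounded domain carefully: restrict to a large ball $B_R$, note $W\pm\tilde v \geq -\delta_R$ on $\partial B_R$ with $\delta_R\to 0$, apply the classical maximum principle (valid since the zeroth-order coefficient is positive and $W\pm\tilde v + \delta_R$ would be a supersolution) to conclude $W\pm\tilde v \geq -\delta_R$ in $B_R$, then let $R\to\infty$. The regularity $v\in C^2$ and the bound on $\epsilon^2\Delta v$ justify that $\tilde v$ is a strong/distributional subsolution as needed, and the decay $\phi[w_{\bf P}^2]$ is bounded (even without it, $\phi\geq 0$ suffices, which is all we use) so no quantitative control on $\epsilon^2\phi$ is required — only its sign.

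The main obstacle, and the only genuinely delicate point, is getting the explicit constant $\ell - 1 + e^2$ right rather than just "$C(\ell)$": this requires organizing the barrier so that, at any point $x$, the term in $W$ centered at the nearest $P_i$ is bounded by $2c_0 e^{-\mu\min_j|x-P_j|/\epsilon}$ times roughly $1$, the $\ell - 1$ other exponentials each contribute at most $2c_0$ times the same base quantity (since $|x-P_j|\geq |x-P_i|$ for the nearest $P_i$... actually one must be slightly more careful: $e^{-\mu|x-P_j|/\epsilon}\leq e^{-\mu\min_k|x-P_k|/\epsilon}$ trivially), and the extra $e^2$-weighted term accounts for the loss incurred in passing from a supersolution inequality to a genuine barrier (the factor $e^2$ is what is needed to absorb the "defect" where the approximate-supersolution inequality is only $\geq \frac12$ rather than $\geq 1$, combined with the possible clustering of the $\tilde P_i$ within distance $O(\log\frac1{\epsilon^2})$ dictated by ${\bf P}\in\Gamma_\epsilon$). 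Tracking these constants is routine but must be done with care; everything else is a standard barrier construction.
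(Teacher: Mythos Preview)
Your approach is correct in spirit and would yield a valid proof, but it differs from the paper's in the barrier construction, and your handling of two points is imprecise.

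The paper does \emph{not} use the raw sum $\sum_i e^{-\mu|x-P_i|/\epsilon}$; instead it smooths each exponential near its peak by setting
\[
\xi(x)=2c_0\sum_{i=1}^{\ell}\Bigl[e^{-\mu|x-P_i|/\epsilon}+(1-e^{-\mu|x-P_i|/\epsilon})\,\chi\bigl(\mu|x-P_i|/\epsilon\bigr)\Bigr],
\]
with $\chi$ a cutoff equal to $1$ on $[0,1]$ and $0$ on $[2,\infty)$. This makes $\xi\in C^2(\mathbb R^N)$, so the \emph{classical} pointwise maximum-principle argument (``at a negative interior minimum of $\xi\pm v$ the Laplacian is nonnegative, contradiction'') applies directly. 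The paper then splits into three cases according to whether $\mu|x-P_i|/\epsilon$ is $\le 1$, in $(1,2)$, or $\ge 2$ for the nearest $P_i$; the constant $\ell-1+e^2$ comes from bounding the smoothed barrier in the intermediate region, where $e^{\mu|x-P_i|/\epsilon}\le e^2$.

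Your unsmoothed barrier $W=2c_0\sum_i e^{-\mu|y-\tilde P_i|}$ has conical singularities at the $\tilde P_i$, so the classical argument does not apply there. You are right that the kink ``helps'' (in fact for $N\ge 3$ there is no Dirac mass; rather $-\Delta e^{-\mu|z|}=(\tfrac{(N-1)\mu}{|z|}-\mu^2)e^{-\mu|z|}\in L^1_{\mathrm{loc}}$ is large and positive near $0$), so $W$ is a distributional supersolution and the argument can be completed via a weak maximum principle or a short first-order analysis ruling out a minimum at a kink. But this step is not the one-liner you make it; the paper's smoothing avoids it entirely, at the price of a slightly worse constant.

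Finally, your discussion of the constant is muddled. With the plain sum $W=2c_0\sum_i e^{-\mu|y-\tilde P_i|}$ one has directly $W\le 2c_0\,\ell\,e^{-\mu\min_i|y-\tilde P_i|}$, which is \emph{better} than $2c_0(\ell-1+e^2)$; the extra asymmetric term $2c_0e^2 e^{-\mu|y-\tilde P_1|}$ you introduce is unnecessary, and your explanation for the factor $e^2$ (``absorbing the defect where the supersolution inequality is only $\ge\tfrac12$'') is not correct. The constant $\ell-1+e^2$ is not intrinsic; it is an artifact of the paper's specific cutoff with support in $\{\mu|x-P_i|/\epsilon\le 2\}$.
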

\begin{proof}
We use a comparison principle. Take a $\chi(t)$ a smooth cut-off function such that $$\chi(t)=1, \,\, \mbox{for}\,\,\ |t|\leq 1, \qquad \chi(t)=0\,\, \mbox{for}\,\, |t|\geq 2, \,\,\, 0\leq \chi \leq 1.$$ Now consider the following auxiliary function:
$$\xi(x)=2c_0 \sum_{i=1}^\ell \left[e^{-\mu\frac{|x-P_{i}|}{\epsilon}}+(1-e^{-\mu\frac{|x-P_{i}|}{\epsilon}})\chi\left(\mu\frac{|x-P_{i}|}{\epsilon}\right)\right].$$
We have that
\begin{eqnarray*}
\Delta \xi&=& 2c_0 \sum_{i}\left[\frac{\mu^2}{\epsilon^2} e^{-\mu\frac{|x-P_{i}|}{\epsilon}}+\frac{(N-1)\mu}{\epsilon^2}\frac{\epsilon}{|x-P_{i}|}e^{-\mu\frac{|x-P_{i}|}{\epsilon}}+\frac{\mu^2}{\epsilon^2}(1-e^{-\mu\frac{|x-P_{i}|}{\epsilon}})\chi''\left(\mu\frac{|x-P_{i}|}{\epsilon}\right)\right.\\
&&+\frac{(N-1)\mu}{\epsilon^2}\frac{\epsilon}{|x-P_{i}|}(1-e^{-\mu\frac{|x-P_{i}|}{\epsilon}})\chi'\left(\mu\frac{|x-P_{i}|}{\epsilon}\right)+\frac{2\mu^2}{\epsilon^2}e^{-\mu\frac{|x-P_{i}|}{\epsilon}}\chi'(\mu\frac{|x-P_{i}|}{\epsilon})\\
&&\left. +\frac{(N-1)\mu}{\epsilon^2}\frac{\epsilon}{|x-P_{i}|}e^{-\mu\frac{|x-P_{i}|}{\epsilon}}\chi\left(\mu\frac{|x-P_{i}|}{\epsilon}\right)-\frac{\mu^2}{\epsilon^2}e^{-\mu\frac{|x-P_{i}|}{\epsilon}}\chi\left(\mu\frac{|x-P_{i}|}{\epsilon}\right)\right]
\end{eqnarray*}
Hence
\begin{eqnarray*}
\epsilon^2\Delta \xi&=& 2c_0 \sum_{i}\left[\mu^2 e^{-\mu\frac{|x-P_{i}|}{\epsilon}}+(N-1)\mu\frac{\epsilon}{|x-P_{i}|}e^{-\mu\frac{|x-P_{i}|}{\epsilon}}+\mu^2(1-e^{-\mu\frac{|x-P_{i}|}{\epsilon}})\chi''\left(\mu\frac{|x-P_{i}|}{\epsilon}\right)\right.\\
&&+(N-1)\mu\frac{\epsilon}{|x-P_{i}|}(1-e^{-\mu\frac{|x-P_{i}|}{\epsilon}})\chi'\left(\mu\frac{|x-P_{i}|}{\epsilon}\right)+2\mu^2e^{-\mu\frac{|x-P_{i}|}{\epsilon}}\chi'(\mu\frac{|x-P_{i}|}{\epsilon})\\
&&\left. +(N-1)\mu\frac{\epsilon}{|x-P_{i}|}e^{-\mu\frac{|x-P_{i}|}{\epsilon}}\chi\left(\mu\frac{|x-P_{i}|}{\epsilon}\right)-\mu^2e^{-\mu\frac{|x-P_{i}|}{\epsilon}}\chi\left(\mu\frac{|x-P_{i}|}{\epsilon}\right)\right]
\end{eqnarray*}
Fixed $x\in \mathbb R^N$. We distinguish three cases:
\begin{itemize}
\item[1.] There exists $i\in \{1, \ldots, \ell\}$ such that $\mu\frac{|x-P_{i}|}{\epsilon}\leq 1$. Then
\begin{eqnarray*}
\xi(x)&=&2c_0+2c_0\sum_{j\neq i}e^{-\mu\frac{|x-P_{j}|}{\epsilon}}\\
&=& 2c_0 e^{-\mu\frac{|x-P_{i}|}{\epsilon}}e^{\mu\frac{|x-P_{i}|}{\epsilon}} +2c_0\sum_{j\neq i}e^{-\mu\frac{|x-P_{j}|}{\epsilon}}\\
&\leq & 2c_0 (e+\ell-1)e^{-\mu\frac{|x-P_{i}|}{\epsilon}}
\end{eqnarray*}
and, since,
\begin{equation}\label{deltae}
\epsilon^2\Delta e^{-\mu|x-P_{i}|}=\left(\mu^2-\frac{(N-1)\mu\epsilon}{|x-P_{i}|}\right)e^{-\mu\frac{|x-P_{i}|}{\epsilon}}
\end{equation}
 for $x\neq P_i$ we have
\begin{eqnarray*}
\epsilon^2\Delta\xi-(1+\phi[{w_{\bf P}}^2]\xi&\leq & 2c_0\sum_{j\neq i}(\mu^2-1)e^{-\mu\frac{|x-P_{j}|}{\epsilon}}-2c_0\\
&\leq & -c_0e^{-\mu\frac{|x-P_{i}|}{\epsilon}}
\end{eqnarray*}
provided $\mu$ is sufficiently small.
\item[2. ] For all $i=1, \ldots, \ell$, $\mu\frac{|x-P_{i}|}{\epsilon}\geq 2$. Then $$\xi(x)=2c_0\sum_{i}e^{-\mu\frac{|x-P_{i}|}{\epsilon}}\leq 2c_0 \ell e^{-\mu\min_{i=1, \ldots, \ell}\frac{|x-P_{i}|}{\epsilon}}$$ and by (\ref{deltae})
\begin{eqnarray*}
\epsilon^2\Delta\xi-(1+\phi[{w_{\bf P}}^2])\xi&\leq & 2c_0\sum_{i}(\mu^2-1)e^{-\mu\frac{|x-P_{i}|}{\epsilon}}\\
&\leq & -c_0\sum_{\theta}e^{-\mu\frac{|x-P_{i}|}{\epsilon}}\leq -c_0 e^{-\mu\min_{i=1, \ldots \ell}\frac{|x-P_{i}|}{\epsilon}}
\end{eqnarray*}
provided $\mu$ is sufficiently small.
\item[3. ] There exists $i\in \{1, \ldots, \ell\}$ such that $1<\mu\frac{|x-P_{i}|}{\epsilon}<2$. Then
\begin{eqnarray*}
\xi(x)&=& 2c_0\left[e^{-\mu\frac{|x-P_{i}|}{\epsilon}}+(1-e^{-\mu\frac{|x-P_{i}|}{\epsilon}})\chi\left(\mu\frac{|x-P_{i}|}{\epsilon}\right)+\sum_{j\neq i}e^{-\mu\frac{|x-P_{j}|}{\epsilon}}\right]\\
&\leq & 2c_0\left[1+(\ell-1)e^{-\mu\frac{|x-P_{i}|}{\epsilon}}\right]\\
&=& 2c_0(e^{\mu\frac{|x-P_{i}|}{\epsilon}}+\ell-1)e^{-\mu\frac{|x-P_{i}|}{\epsilon}}\\
&\leq & 2c_0(e^2+\ell-1)e^{-\mu\frac{|x-P_{i}|}{\epsilon}}
\end{eqnarray*}
and, since
\begin{eqnarray*}
&& \hskip-1.0cm \epsilon^2\Delta\left[(1-e^{-\mu\frac{|x-P_{i}|}{\epsilon}})\chi\left(\mu\frac{|x-P_{i}|}{\epsilon}\right)\right]=
\mu^2(1-e^{-\mu\frac{|x-P_{i}|}{\epsilon}})\chi''\left(\mu\frac{|x-P_{i}|}{\epsilon}\right)\\
&&+\frac{(N-1)\mu\epsilon}{|x-P_{i}|}(1-e^{-\mu\frac{|x-P_{i}|}{\epsilon}})\chi'\left(\mu\frac{|x-P_{i}|}{\epsilon}\right)+2\mu^2e^{-\mu\frac{|x-P_{i}|}{\epsilon}}\chi'\left(\mu\frac{|x-P_{i}|}{\epsilon}\right)\\
&& + \frac{(N-1)\mu\epsilon}{|x-P_i|}e^{-\mu\frac{|x-P_{i}|}{\epsilon}}\chi\left(\mu\frac{|x-P_{i}|}{\epsilon}\right)-\mu^2e^{-\mu\frac{|x-P_{i}|}{\epsilon}}\chi\left(\mu\frac{|x-P_{i}|}{\epsilon}\right)\\
&&= O(\mu^2)e^{-\mu\frac{|x-P_{i}|}{\epsilon}}
\end{eqnarray*}
we have, by (\ref{deltae}),
\begin{eqnarray*}
\epsilon^2\Delta\xi-(1+\phi[w_{{\bf P}}^2])\xi &\leq & O(\mu^2)e^{-\mu\frac{|x-P_{i}|}{\epsilon}}-\xi\\
&\leq & (O(\mu^2)-2c_0)e^{-\mu\frac{|x-P_{i}|}{\epsilon}}\\
&\leq & -c_0 e^{-\mu\frac{|x-P_{i}|}{\epsilon}}
\end{eqnarray*}
for $\mu$ sufficiently small.
\end{itemize}
Hence, in any case, we have
\begin{equation}\label{xi}
\xi(x)\leq 2c_0 (e^2+\ell-1)e^{-\mu\min_{i=1, \ldots, \ell}\frac{|x-P_{i}|}{\epsilon}}
\end{equation}
\begin{equation}\label{deltaxi}
\epsilon^2\Delta \xi -(1+\phi_{w_{\bf P}})\xi \leq -c_0 e^{-\mu\min_{i=1, \ldots, \ell}\frac{|x-P_{i}|}{\epsilon}}
\end{equation}
for any $x\in \mathbb R^N$.\\
By (\ref{hip})
\begin{equation}\label{star}
-\epsilon^2\Delta v + (1+\phi[{w_{\bf P}}^2])v \leq |\epsilon^2\Delta v - (1+\phi[{w_{\bf P}}^2])v|\leq c_0 e^{-\mu\min_{i=1, \ldots, \ell}\frac{|x-P_{i}|}{\epsilon}}
\end{equation}
for all $x\in \mathbb R^N$. Then, by (\ref{deltaxi}) and (\ref{star}) we get
$$\epsilon^2\Delta (\xi-v)-(1+\phi[w_{{\bf P}}^2])(\xi-v)\leq 0$$ for all $x\in \mathbb R^N$.\\ We claim that $\xi-v \geq 0$ in $\mathbb R^N$.\\ Indeed, if we suppose by contradiction that the minimum point $\bar{x}$ of $\xi-v$ is such that $(\xi-v)(\bar{x})<0$, since $\Delta (\xi-v)(\bar{x})\geq 0$ then $$\epsilon^2\Delta (\xi-v)(\bar{x})-(1+\phi[w_{\bf P}^2])(\xi-v)(\bar{x})>0.$$ Analogously we can prove that $v+\xi\geq 0$. Thus $|v|\leq \xi$ and, using (\ref{xi}) we can conclude.
\end{proof}Let ${\bf P}\in \Gamma_{\epsilon}$. Let us introduce the following functions $$Z_{P_{i}, j}=f'(w_{P_{i}})\frac{\partial w_{P_{i}}}{\partial x_j}, \qquad i\in \{1, \ldots, \ell\},\quad j\in \{1, \ldots, N\}.$$ Since $$Z_{P_{i}, j}=-\epsilon^2\Delta \frac{\partial w_{P_{i}}}{\partial x_j}+\frac{\partial w_{P_{i}}}{\partial x_j}$$ after an integration by parts it is immediate to prove that
\begin{equation}\label{H1L2}
\left(v, \frac{\partial w_{P_{i}}}{\partial x_j}\right)_{\epsilon}=\langle v, Z_{P_{i}, j}\rangle\qquad \forall\,\, v \in H^1(\mathbb R^N).
 \end{equation}
 Then orthogonality to the functions $\frac{\partial w_{P_{i}}}{\partial x_j}$ in $H^1(\mathbb R^N)$ is equivalent to orthogonality to $Z_{P_{i}, j}$ in $L^2(\mathbb R^N)$. Then we easily get
\begin{equation}\label{prodotto}
\langle Z_{P_{i}, j}, \frac{\partial w_{P_{m}}}{\partial x_n}\rangle =\left(\frac{\partial w_{P_{i}}}{\partial x_j}, \frac{\partial w_{P_{m}}}{\partial x_n}\right)_{\epsilon}=\left\{
\begin{array}{lr}
\epsilon^{N-2}\left\|\frac{\partial w}{\partial x_1}\right\|^2_{H^1(\mathbb R^N)}\qquad \mbox{for}\,\, (i, j)=(m, n)\\
o(\epsilon^{N-2}) \qquad \qquad \mbox{for}\,\, (i, j)\neq (m, n)
\end{array}
\right.
\end{equation}
as $\epsilon \rightarrow 0$.\\ Let $\mu>0$ a sufficiently small number. We introduce the following weighted norm: $$\|v\|_{*, {\bf P}}:=\sup_{x\in \mathbb R^N} e^{\mu \min_{i=1, \ldots, \ell}\frac{|x-P_{i}|}{\epsilon}}|v(x)|,$$ and the spaces $$C_{*, {\bf P}}=\{v\in C(\mathbb R^N)\,\, :\,\,\, \|v\|_{*, {\bf P}}<\infty \}, \qquad H^2_{*, {\bf P}}=H^2(\mathbb R^N)\cap C_{*, {\bf P}}.$$ We consider the following linear problem : \\
Taken ${\bf P}\in \Gamma_{\epsilon}$  and given $h\in C_{*, {\bf P}}$ find a function $v$ and constants $c_{i, j}$ satisfying
\begin{equation}\label{problin}
\left\{
\begin{array}{lr}
\mathcal{L}_{{\bf P}}[v]=h+\sum_{i, j} c_{i, j}Z_{P_{i}, j}\\\\
v\in H^2_{*, {\bf P}},\qquad \langle v, Z_{P_{i}, j}\rangle =0, \,\, i=1, \ldots, \ell,\,\, j=1,\ldots,N.
\end{array}
\right.
\end{equation}
where $$\mathcal{L}_{{\bf P}}[v]:=\epsilon^2\Delta v -v+ f'(w_{{\bf P}})v-\phi[{w_{{\bf P}}^2}]v-2\phi[w_{{\bf P}}v]w_{{\bf P}}.$$
\begin{lemma}\label{Estnorm}
There exists $C>0$ such that, provided $\epsilon$ is sufficiently small, if ${\bf P}\in \bar{\Gamma}_{\epsilon}$ and $(v, c_{i, j}, h)$ satisfies (\ref{problin}) the following holds $$\|v\|_{*, {\bf P}}\leq C \|h\|_{*, {\bf P}}.$$
\end{lemma}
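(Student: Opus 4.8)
The plan is to argue by contradiction, exploiting the uniform a priori estimate furnished by Lemma \ref{PDEest} and the non-degeneracy of $w$ in (f2). Suppose the claim fails: then there are sequences $\epsilon_n\to 0$, ${\bf P}^n\in\bar\Gamma_{\epsilon_n}$, and $(v_n,c^n_{i,j},h_n)$ solving \eqref{problin} with $\|v_n\|_{*,{\bf P}^n}=1$ but $\|h_n\|_{*,{\bf P}^n}\to 0$. The first step is to dispose of the Lagrange multipliers: pairing the equation $\mathcal L_{{\bf P}^n}[v_n]=h_n+\sum c^n_{i,j}Z_{P^n_i,j}$ with each $\frac{\partial w_{P^n_m}}{\partial x_n}$ and using the orthogonality $\langle v_n,Z_{P^n_i,j}\rangle=0$ together with \eqref{prodotto}, one gets a nearly diagonal linear system of size $N\ell$ for the $c^n_{i,j}$; estimating the left-hand side using $\|v_n\|_{*,{\bf P}^n}=1$, the exponential decay built into the weighted norm, Lemma \ref{PoissonSolutionBounded} for the Poisson terms, and $\|h_n\|_{*,{\bf P}^n}\to 0$, one concludes $\epsilon_n^{-(N-2)}c^n_{i,j}\to 0$, so the multiplier contribution is negligible in the $\|\cdot\|_{*,{\bf P}^n}$-scale.

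The second step is to apply Lemma \ref{PDEest} to $v_n$. Writing $\mathcal L_{{\bf P}^n}[v_n]=\epsilon_n^2\Delta v_n-(1+\phi[w_{{\bf P}^n}^2])v_n+f'(w_{{\bf P}^n})v_n-2\phi[w_{{\bf P}^n}v_n]$, we move everything except $\epsilon_n^2\Delta v_n-(1+\phi[w_{{\bf P}^n}^2])v_n$ to the right: the term $f'(w_{{\bf P}^n})v_n$ is controlled because $f'(w)$ decays exponentially and is concentrated near the $P^n_i$, the term $2\phi[w_{{\bf P}^n}v_n]$ is $O(\epsilon_n^2)$ by Lemma \ref{PoissonSolutionBounded} (after splitting $w_{{\bf P}^n}v_n$ into bumps), and the right-hand side $h_n+\sum c^n_{i,j}Z_{P^n_i,j}$ is small by Step 1. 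Hence the hypothesis \eqref{hip} of Lemma \ref{PDEest} holds with a constant $c_0=c_0(n)$ which, crucially, can be taken to be $\eta_n\to 0$ plus $\kappa$ times a local contribution near the peaks — so one obtains
$$|v_n(x)|\le \big(o(1)+C\!\!\sum_{i=1}^\ell f'(w)(\tfrac{x-P^n_i}{\epsilon_n})\big)e^{-\mu\min_i\frac{|x-P^n_i|}{\epsilon_n}},$$
which forces the $\|\cdot\|_{*,{\bf P}^n}$-mass of $v_n$ to concentrate in $\epsilon_n$-neighborhoods of the peaks: $\sup_{\mathrm{dist}(x,\{P^n_i\})\ge R\epsilon_n}e^{\mu\min_i\frac{|x-P^n_i|}{\epsilon_n}}|v_n(x)|\to 0$ as $R\to\infty$, uniformly in $n$.

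The third step is the blow-up/limit argument. Because $\|v_n\|_{*,{\bf P}^n}=1$, after passing to a subsequence the mass does not vanish, so there is an index $i_0$ and points staying within bounded $\epsilon_n$-distance of $P^n_{i_0}$ where $|v_n|\ge \delta>0$. Rescale: set $\tilde v_n(y):=v_n(P^n_{i_0}+\epsilon_n y)$. Using the mutual-distance lower bound $|P^n_i-P^n_j|\ge\beta^2\epsilon_n\log\frac1{\epsilon_n^2}\to\infty$ in units of $\epsilon_n$, all other bumps escape to infinity, and by elliptic estimates $\tilde v_n\to \tilde v$ in $C^1_{loc}$ where $\tilde v\in H^2$ solves the limiting linearized equation $\Delta\tilde v-\tilde v+f'(w)\tilde v=0$ (the Poisson terms drop out since they are $O(\epsilon_n^2)$, and the Lagrange-multiplier data vanishes in the limit by Step 1). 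By non-degeneracy (f2), $\tilde v\in\mathrm{span}\{\partial_1 w,\dots,\partial_N w\}$. On the other hand, passing to the limit in the orthogonality conditions $\langle v_n,Z_{P^n_{i_0},j}\rangle=0$ gives $\langle \tilde v, f'(w)\partial_j w\rangle=\langle \tilde v, \partial_j w-\Delta\partial_j w\rangle_{H^1}=0$ for all $j$, hence $\tilde v\perp\mathrm{span}\{\partial_j w\}$, so $\tilde v\equiv0$. This contradicts $|\tilde v_n|\ge\delta$ at a bounded point, since the weighted decay of Step 2 together with $C^1_{loc}$ convergence would then force $\|v_n\|_{*,{\bf P}^n}\to0$. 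Therefore the estimate $\|v\|_{*,{\bf P}}\le C\|h\|_{*,{\bf P}}$ holds for all small $\epsilon$.

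\emph{Main obstacle.} The delicate point is Step 2: one must verify that the "error" terms $f'(w_{{\bf P}^n})v_n$ and $2\phi[w_{{\bf P}^n}v_n]$ are genuinely dominated by a right-hand side of the form required by Lemma \ref{PDEest} — i.e. bounded by $c_0\,e^{-\mu\min_i|x-P_i|/\epsilon_n}$ with $c_0$ either small or producing only a localized (near-peak) correction that can be absorbed. This is where the condition $\beta\in(\sigma,1)$ and the gap between the weight exponent $\mu$ and the decay rate of $f'(w)$ matter, and it is the step requiring the most careful bookkeeping with the weighted norm; the blow-up argument in Step 3 is then comparatively standard given non-degeneracy.
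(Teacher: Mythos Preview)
Your overall strategy---contradiction, elimination of the multipliers by testing against $\partial w_{P^n_m}/\partial x_n$, a blow-up at a peak combined with non-degeneracy, and an application of Lemma~\ref{PDEest}---is precisely that of the paper. The gap is in the ordering of your Steps~2 and~3, and specifically in the use of Lemma~\ref{PDEest} in Step~2.

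Lemma~\ref{PDEest} takes a \emph{constant} $c_0$ in its hypothesis and returns the same constant (up to a universal factor) in the conclusion. At the stage you invoke it, the term $f'(w_{{\bf P}^n})v_n$ on the right has $\|\cdot\|_{*,{\bf P}^n}$-norm of order one (since $f'(w)$ is bounded away from zero near the origin and $\|v_n\|_{*,{\bf P}^n}=1$), so the lemma only reproduces $\|v_n\|_{*,{\bf P}^n}\le C$, which you already know. The spatially-varying bound you write,
\[
|v_n(x)|\le \Big(o(1)+C\sum_i f'(w)\big(\tfrac{x-P^n_i}{\epsilon_n}\big)\Big)e^{-\mu\min_i|x-P^n_i|/\epsilon_n},
\]
does not follow from Lemma~\ref{PDEest} as stated; it would require a different barrier. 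An exterior comparison on $\{\min_i|x-P^n_i|\ge R\epsilon_n\}$ fails for the same reason: on its boundary you only know $|v_n|\le e^{-\mu R}$, not $o(1)e^{-\mu R}$, so the barrier constant cannot be taken small.

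The paper resolves this by reversing the order. It first runs the blow-up argument (your Step~3) as a self-contained step: assuming for contradiction that $\|v_n\|_{L^\infty(\cup_i B_{R\epsilon_n}(P^n_i))}\ge c>0$ for some fixed $R$, it rescales at the offending peak and uses non-degeneracy together with the orthogonality conditions to conclude $v_n\to 0$ uniformly on every fixed $\epsilon_n$-ball about each peak. This step does not require any decay information away from the peaks. Only then is $\|f'(w_{{\bf P}^n})v_n\|_{*,{\bf P}^n}=o(1)$ (since $f'(w_{{\bf P}^n})$ is exponentially small away from the peaks and $v_n$ is now small near them), and Lemma~\ref{PDEest} applies with $c_0=o(1)$ to give $\|v_n\|_{*,{\bf P}^n}=o(1)$, the desired contradiction. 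Swapping your Steps~2 and~3 in this way closes the gap.
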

\begin{proof}
By contradiction, we assume the existence of a sequence $\epsilon_n\rightarrow 0$, $$(\bar{v}_n, \bar{c}_{i, j}^n)\in H^2_{*, {\bf P}^n}\times \mathbb R,\qquad \bar{h}_n\in C_{*, {\bf P}^n}$$ satisfying (\ref{problin}) such that $$\|\bar{v}_n\|_{*, {\bf P}^n}> n \|\bar{h}_n\|_{*, {\bf P}^n}.$$ Set $$v_n=\frac{\bar{v}_n}{\|\bar{v}_n\|_{*, {\bf P}^n}},\qquad c_{i, j}^n=\frac{\bar{c}^n_{i, j}}{\|\bar{v}_n\|_{*, {\bf P}^n}},\qquad h_n=\frac{\bar{h}_n}{\|\bar{v}_n\|_{*, {\bf P}^n}}.$$ We obtain that $(v_n, c_{i, j}^n, h_n)$ satisfies (\ref{problin}) and $$\|v_n\|_{*, {\bf P}^n}=1, \qquad \|h_n\|_{*, {\bf P}^n}=o(1).$$ Choose $(h', m)\in \{1, \ldots, \ell\}\times \{1, \ldots, N\}$ be such that, up to a subsequence, $|c^n_{h', m}|\geq |c^n_{i, j}|$ for all $(i, j)$ and $n$. By multiplying the equation in (\ref{problin}) by $\frac{\partial w_{P_{h'}^n}}{\partial x_m}$ and integrating on $\mathbb R^N$, we get
$$\underbrace{\sum_{i, j}c^n_{i, j}\langle Z_{P^n_{i}, j},\frac{\partial w_{P_{h'}^n}}{\partial x_m}\rangle }_{(A)}=-\underbrace{\langle h_n, \frac{\partial w_{P_{h'}^n}}{\partial x_m}\rangle}_{(B)}+\underbrace{\langle \mathcal{L}_{{\bf P}^n}[v_n], \frac{\partial w_{P_{h'}^n}}{\partial x_m}\rangle}_{(C)}.$$ First let us examine the term (A). By (\ref{prodotto}) $$(A)= \epsilon^{N-2}_n c^n_{h', m}\left(\left\|\frac{\partial w}{\partial x_1}\right\|^2+o(1)\right).$$ The term (B) can be estimated as
$$|(B)|=\left|\int_{\mathbb R^3}h_n \frac{\partial w_{P_{h'}^n}}{\partial x_m}\, dx\right|\leq \|h_n\|_{*, {\bf P}^n}\int_{\mathbb R^3}|\nabla w_{P^n_{h'}}|\, dx\leq \epsilon_n^{N-1}\|h_n\|_{*, {\bf P}^n}.$$
Then, as regards the last term (C) we find
\begin{eqnarray*}
|(C)|&=&\left|\int_{\mathbb R^N}\mathcal{L}_{{\bf P}^n}[v_n]\frac{\partial w_{P_{h'}^n}}{\partial x_m}\, dx\right|\\
&=&\left|\int_{\mathbb R^N}\left[\epsilon^2_n\Delta v_n -v_n+ f'(w_{{\bf P}^n})v_n-\phi[{w_{{\bf P}^n}^2}]v_n-2\phi[w_{{\bf P}^n} v_n] w_{{\bf P}^n}\right]\frac{\partial w_{P_{h'}^n}}{\partial x_m}\, dx\right|\\
&\leq & C\|v_n\|_{*, {\bf P}^n}\int_{\mathbb R^N}\left|f'(w_{{\bf P}^n})-f'(w_{P_{h'}^n})\right|\left|\frac{\partial w_{P_{h'}^n}}{\partial x_m}\right|\, dx + C\epsilon_n^{N-1}\|v_n\|_{*, {\bf P}^n}\\
& \leq & C\epsilon_n^{-1}\|v_n\|_{*, {\bf P}^n}\sum_{i\neq j}\int_{\mathbb R^N}w_{P_i^n}^{\sigma}w_{P_j^n}\, dx+ C\epsilon_n^{N-1}\|v_n\|_{*, {\bf P}^n}\\
&\leq & \ C\|v_n\|_{*, {\bf P}^n}\left(\epsilon_n^{N-1+2\beta^2\sigma}+\epsilon^{N-1}\right).
\end{eqnarray*}
Putting together (A), (B) and (C) we get $$c^n_{i, j}=o(\epsilon)\qquad \forall\,\,\ (i, j)$$ by which $$\|h_n + \sum_{i, j}c^n_{i, j}Z_{P_{i}^n, j}\|_{*, {\bf P}^n}=o(1).$$ This implies
\begin{equation}\label{pallino}
\|\mathcal{L}_{{\bf P}^n}[v_n]\|_{*, {\bf P}^n}=o(1).
\end{equation}

Fix $R>0$. We claim $$\|v_n\|_{L^{\infty}\left(\cup_{i=1}^\ell B_{R\epsilon_n} (P_{i}^n)\right)}=o(1).$$ Otherwise, we may assume that $$\|v_n\|_{L^{\infty}(B_{R\epsilon_n}(P_1^n))}\geq c>0$$ for some $R>0$. By multiplying the equation in (\ref{problin}) by $v_n$ and integrating by parts we get that the sequence $v_n(\epsilon_n x +P_1^n)$ is bounded in $H^1(\mathbb R^N)$. Therefore, possibly passing to a subsequence, $v_n(\epsilon_n x+P_1^n) \rightarrow v_0$ weakly in $H^1(\mathbb R^N)$ and a. e. in $\mathbb R^N$ and $v_0$ satisfies $$\Delta v_0-v_0+f'(w)v_0=0, \qquad |v_0(x)|\leq e^{-\mu|x|}.$$ According to elliptic regularity theory we may assume $v_n(\epsilon_n x +P_1^n) \rightarrow v_0$ uniformly on compact sets. Then $\|v_0\|_{\infty}\geq c$. By the non-degeneracy of $w$ (assumption (f2)) it follows $$v_0=\sum_{j=1}^N a_j \frac{\partial w}{\partial x_j}.$$ On the other hand, for $j\in \{1, \ldots, N\}$
\begin{eqnarray*}
&& \hskip-1.0cm 0=\int_{\mathbb R^N} v_n(\epsilon_n x +P_1^n) Z_{P_1^n, m}(\epsilon_n x+P_1^n)\, dx = \left(v_n(\epsilon x+P_1^n), \frac{\partial w_{P_1^n}}{\partial x_m}(\epsilon_n x+ P_1^n)\right)_{\epsilon}\\
&& \rightarrow \left(v_0, \frac{\partial w}{\partial x_m}\right)_1=a_m \left\|\frac{\partial w}{\partial x_1}\right\|^2.
\end{eqnarray*}
from which it follows $a_m=0$ and hence, in particular, $v_0=0$, a contradiction. Then the claim follows. We immediately obtain $$\|f'(w_{{\bf P}^n})v_n\|_{*, {\bf P}^n}=o(1)$$ and by (\ref{pallino}) $$\|\epsilon_n^2\Delta v_n -(1+\phi[{w_{{\bf P}^n}^2}])v_n\|_{*, {\bf P}^n}=o(1)$$ By the Lemma \ref{PDEest} we get $$v_n(x)=o(1)e^{-\mu \min_{i=1, \ldots, \ell}\frac{|x-P_{i}^n|}{\epsilon_n}}$$ which is a contradiction since $\|v_n\|_{*, {\bf P}^n}=1$.
\end{proof}
\begin{lemma}\label{EsUn}
For $\epsilon>0$ sufficiently small, for ${\bf P}\in \bar{\Gamma}_{\epsilon}$ and $h\in C_{*, {\bf P}}$ there exists a unique pair $(v, c_{i, j})\in H^2_{*, {\bf P}}\times \mathbb R^{N\ell}$ solving (\ref{problin}). Furthermore by Lemma (\ref{Estnorm}) $$\|v\|_{*, {\bf P}}\leq C \|h\|_{*, {\bf P}}.$$
\end{lemma}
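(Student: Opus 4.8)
The plan is to obtain existence and uniqueness for the linear problem \eqref{problin} by a standard functional-analytic argument, combining the a priori estimate of Lemma \ref{Estnorm} with Fredholm theory. First I would recast \eqref{problin} as an operator equation in a Hilbert-space setting: let $\mathcal{K}_{{\bf P}} := \{u\in H^1(\mathbb R^N) : \langle u, Z_{P_i,j}\rangle = 0 \text{ for all } i,j\}$ be the $L^2$-orthogonal complement (equivalently, by \eqref{H1L2}, the $(\cdot,\cdot)_{\epsilon}$-orthogonal complement) of the $N\ell$-dimensional space spanned by the $\frac{\partial w_{P_i}}{\partial x_j}$. Denote by $\Pi_{{\bf P}}$ the corresponding orthogonal projection. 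The problem \eqref{problin} is equivalent, for $h\in C_{*,{\bf P}}\subset L^2$, to finding $v\in \mathcal{K}_{{\bf P}}\cap H^2(\mathbb R^N)$ with $\Pi_{{\bf P}}\mathcal{L}_{{\bf P}}[v] = \Pi_{{\bf P}} h$; the constants $c_{i,j}$ are then recovered from the components of $\mathcal{L}_{{\bf P}}[v]-h$ along the $Z_{P_i,j}$, and \eqref{prodotto} guarantees that the Gram matrix $\langle Z_{P_i,j}, \frac{\partial w_{P_m}}{\partial x_n}\rangle$ is invertible for $\epsilon$ small so that this recovery is well-defined and unique.

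Next I would verify that the reduced operator is Fredholm of index zero and that its kernel is trivial. Writing $\mathcal{L}_{{\bf P}}[v] = \epsilon^2\Delta v - v + \big(f'(w_{{\bf P}})v - \phi[w_{{\bf P}}^2]v - 2\phi[w_{{\bf P}}v]w_{{\bf P}}\big)$, the principal part $\epsilon^2\Delta - \mathrm{Id}$ is an isomorphism $H^2(\mathbb R^N)\to L^2(\mathbb R^N)$, while the remaining terms define a compact perturbation (the potentials $f'(w_{{\bf P}})$ and $\phi[w_{{\bf P}}^2]$ decay at infinity by the exponential decay of $w$ and by Lemma \ref{PoissonSolutionBounded}, so multiplication by them followed by the compact embedding $H^2_{loc}\hookrightarrow L^2_{loc}$ together with the decay yields compactness; the nonlocal term $v\mapsto \phi[w_{{\bf P}}v]w_{{\bf P}}$ is likewise compact by the regularizing property of $\phi[\cdot]$ and the decay of $w_{{\bf P}}$). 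Hence $\Pi_{{\bf P}}\mathcal{L}_{{\bf P}}\colon \mathcal{K}_{{\bf P}}\cap H^2 \to \mathcal{K}_{{\bf P}}$ is Fredholm of index zero. By Lemma \ref{Estnorm}, any solution $(v,c_{i,j})$ of \eqref{problin} with $h=0$ satisfies $\|v\|_{*,{\bf P}}\leq C\|0\|_{*,{\bf P}} = 0$, so $v\equiv 0$ and then all $c_{i,j}=0$ by the invertibility of the Gram matrix; thus the kernel is trivial, and Fredholm index zero gives surjectivity, hence existence and uniqueness of $(v,c_{i,j})$ for every $h\in C_{*,{\bf P}}$.

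Finally, once existence and uniqueness are established, the quantitative bound $\|v\|_{*,{\bf P}}\leq C\|h\|_{*,{\bf P}}$ is immediate from Lemma \ref{Estnorm} applied to the solution just produced, with $C$ independent of $\epsilon$ and ${\bf P}\in\bar\Gamma_{\epsilon}$ for $\epsilon$ small. The main technical point to be careful about is that the a priori estimate of Lemma \ref{Estnorm} is phrased in the weighted norm $\|\cdot\|_{*,{\bf P}}$, whereas the Fredholm argument is naturally carried out in $L^2$; I would bridge this by noting that if $h\in C_{*,{\bf P}}$ then $h\in L^2(\mathbb R^N)$, the $L^2$-solution $v$ produced by Fredholm theory is in $H^2(\mathbb R^N)$, and elliptic regularity together with the equation $\epsilon^2\Delta v - (1+\phi[w_{{\bf P}}^2])v = -f'(w_{{\bf P}})v + 2\phi[w_{{\bf P}}v]w_{{\bf P}} + h + \sum c_{i,j}Z_{P_i,j}$ (whose right-hand side decays like $e^{-\mu\min_i|x-P_i|/\epsilon}$) puts $v$ into $C_{*,{\bf P}}$ via Lemma \ref{PDEest}, so that Lemma \ref{Estnorm} legitimately applies. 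This regularity bootstrap is the only step requiring genuine care; everything else is the routine packaging of an a priori estimate plus a compactness argument.
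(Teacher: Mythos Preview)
Your proposal is correct and follows essentially the same route as the paper: reduce \eqref{problin} to an equation on the orthogonal complement of the span of the $\frac{\partial w_{P_i}}{\partial x_j}$, observe that the operator is a compact perturbation of an isomorphism so that Fredholm theory applies, use Lemma~\ref{Estnorm} with $h=0$ to kill the kernel, recover the $c_{i,j}$ via the nondegenerate Gram matrix \eqref{prodotto}, and finally bootstrap the $L^2$-solution into $H^2_{*,{\bf P}}$ via elliptic regularity and Lemma~\ref{PDEest}. The only cosmetic difference is that the paper packages the Fredholm step through a Riesz-representation reformulation $v+\mathcal{K}_{\bf P}(v)=\bar h$ on $W$ rather than invoking ``Fredholm of index zero'' directly, but the content is identical.
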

\begin{proof}
The existence follows from the Fredholm alternative. To this aim, for every ${\bf P}\in \bar{\Gamma}_{\epsilon}$, let us consider $$W:=\left\{v\in H^1(\mathbb R^N)\,\, :\,\,\ \left(v, \frac{\partial w_{P_{i}}}{\partial x_j}\right)_{\epsilon}=0, \,\, i=1, \ldots, \ell\,\, j=1, \ldots, N \right\}.$$ It is easy to see that $W$ is a closed subset of $H^1(\mathbb R^N)$. By (\ref{H1L2}) $v\in W$ solves the equation in (\ref{problin}) if and only if $$\langle \mathcal{L}_{{\bf P}}[v], z\rangle =\langle h, z \rangle \qquad \forall\,\, z\in W.$$ Indeed, once we know $v$, we can determine the unique $c_{i, j}$ from the linear system of equations
\begin{equation}\label{sy}
\langle \mathcal{L}_{{\bf P}}[v], \frac{\partial w_{P_{i}}}{\partial x_j}\rangle = \langle h,  \frac{\partial w_{P_{i}}}{\partial x_j}\rangle +\sum_{m, n}c_{m, n}\langle Z_{P_{m}, n},  \frac{\partial w_{P_{i}}}{\partial x_j}\rangle.
\end{equation}
with $j=1, \ldots, N$ and $i=1, \ldots, \ell$. \\ The system (\ref{sy}) is equivalent to
\begin{eqnarray}\label{syeq}
\nonumber
&&\hskip-1.0cm -\int_{\mathbb R^N}f'(w_{{\bf P}}) \frac{\partial w_{P_{i}}}{\partial x_j} v\, dx +\int_{\mathbb R^N}\left(\phi[{w_{{\bf P}}^2}]+2\phi[w_{{\bf P}}v] \right) \frac{\partial w_{P_{i}}}{\partial x_j}v\, dx =\\
&&=-\int_{\mathbb R^N}h \frac{\partial w_{P_{i}}}{\partial x_j}\, dx -\sum_{m, n}c_{m, n}\int_{\mathbb R^N}Z_{P_{m}, n} \frac{\partial w_{P_{i}}}{\partial x_j}\, dx
\end{eqnarray}
According to (\ref{prodotto}), the coefficient matrix is nonsingular since it is dominated by its diagonal. By standard elliptic regularity, $v\in L^{\infty}(\mathbb R^N)\cap H^2(\mathbb R^N)$. Furthermore, using the $C^{1, \sigma}$ regularity of $f$ and the exponential decay of $w$
$$\|f'(w_{{\bf P}})v-2 \phi[w_{{\bf P}}v]w_{\bf P}-h-\sum_{i, j}c_{i, j}Z_{P_{i}, j}\|_{*, {\bf P}}<\infty$$ hence Lemma \ref{PDEest} implies $\|v\|_{*, {\bf P}}<\infty$, consequently $(v, c_{i, j})$ solves (\ref{problin}).\\ Thus it remains to solve (\ref{sy}). According to Riesz's representation theorem, take $\mathcal{K}_{\bf P}(v)$, $\bar{h} \in W$ such that
\begin{eqnarray*}
(\mathcal{K}_{\bf P}(v), \psi)_{\epsilon}&=&-\langle f'(w_{\bf P})v, \psi \rangle +\langle\phi[{w_{\bf P}^2}]v+2\phi[w_{\bf P}v]w_{\bf P}, \psi \rangle\,\,\, \forall\,\, \psi\in W.\\
 (\bar{h}, \psi)_{\epsilon}&=&-\langle h, \psi \rangle\,\,\, \forall\,\,\ \psi \in W.
\end{eqnarray*}
Then the problem (\ref{sy}) consists in finding $v\in W$ such that
\begin{equation}\label{syeq1}
v+\mathcal{K}_{\bf P}(v)=\bar{h}.
\end{equation}
It is easy to prove that $\mathcal{K}_{\bf P}$ is a linear compact operator from $W$ to $W$.\\ Using Fredholm's alternatives, (\ref{syeq1}) has a unique solution for each $\bar{h}$, if and only if (\ref{syeq1}) has a unique solution for $\bar{h}=0$. Let $v\in W$ be a solution of $v+\mathcal{K}_{\bf P}(v)=0$ then $v$ solves the system (\ref{problin}) with $h=0$ for some $c_{i, j}\in \mathbb R$. Lemma \ref{Estnorm} implies $v\equiv 0$.
\end{proof}

\

\section{Finite dimensional reduction }\label{Section finite reduction}

This section is devoted to solve the following nonlinear system with the unknowns $(\psi, c_{i, j})\in H^2_{*, {\bf P}}\times \mathbb R^{N\ell}$:
\begin{equation}\label{nsy}
\left\{
\begin{array}{lr}
S_{\epsilon}[w_{\bf P}+\psi]=\sum_{i, j}c_{i, j}Z_{P_{i}, j},\\
\psi\in H^2_{*, {\bf P}},\,\, \langle \psi, Z_{P_{i}, j}\rangle=0,\,\,\ i=1, \ldots, \ell,\,\, j=1, \ldots, N.
\end{array}
\right.
\end{equation}
where ${\bf P}\in \bar{\Gamma}_{\epsilon}$ and $$S_{\epsilon}[v]=\epsilon^2\Delta v-v+f(v)-\phi[{v}^2]v.$$
We prove the following result.
\begin{lemma}\label{fdr}
Fix $\tau=\beta^4(1+\sigma)$. Provided $\epsilon>0$ sufficiently small, for every ${\bf P}\in \bar{\Gamma}_{\epsilon}$ there is a unique pair $(\psi_{\bf P}, c_{i, j}({\bf P}))\in H^2_{*, {\bf{ P}}}\times \mathbb R^{N\ell}$ which solve (\ref{nsy}). Moreover
\begin{equation}
\|\psi_{\bf P}\|_{*, {\bf{ P}}}<\epsilon^{\tau}; \quad (\psi_{\bf P}, \psi_{\bf P})_{\epsilon}\leq \epsilon^{N+2\tau}
\end{equation}
and the maps ${\bf P}\in \bar{\Gamma}_{\epsilon}\longmapsto \psi_{\bf P}\in H^1(\mathbb R^N)$ and ${\bf P}\longmapsto c_{i, j}(\bf P)\in \mathbb R$ are continuous.
\end{lemma}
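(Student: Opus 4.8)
The plan is to recast the system \eqref{nsy} as a fixed point problem in the ball $B_{\epsilon^\tau}:=\{\psi\in H^2_{*,{\bf P}}:\|\psi\|_{*,{\bf P}}\le\epsilon^\tau,\ \langle\psi,Z_{P_i,j}\rangle=0\}$ and apply the contraction mapping principle, using Lemma \ref{EsUn} as the linear inversion step. Concretely, I would write
$S_\epsilon[w_{\bf P}+\psi]=S_\epsilon[w_{\bf P}]+\mathcal{L}_{\bf P}[\psi]+N_{\bf P}(\psi)$, where $\mathcal{L}_{\bf P}$ is the linearized operator introduced before Lemma \ref{Estnorm} and $N_{\bf P}(\psi):=S_\epsilon[w_{\bf P}+\psi]-S_\epsilon[w_{\bf P}]-\mathcal{L}_{\bf P}[\psi]$ collects the genuinely nonlinear remainder, which splits into a term coming from $f$ (of quadratic type in $\psi$, controlled via (f1), i.e. the $C^{1+\sigma}_{loc}\cap C^2$ regularity of $f$) and terms coming from the Poisson nonlinearity $\phi[(w_{\bf P}+\psi)^2](w_{\bf P}+\psi)$, which are estimated using Lemma \ref{esistenzaphi}, Lemma \ref{PoissonSolutionBounded} and the boundedness of $\phi[\cdot]$ in the weighted norm. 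Solving \eqref{nsy} is then equivalent to finding $\psi$ with $\mathcal{L}_{\bf P}[\psi]=-S_\epsilon[w_{\bf P}]-N_{\bf P}(\psi)+\sum_{i,j}c_{i,j}Z_{P_i,j}$ together with the orthogonality conditions, i.e. to the fixed point equation $\psi=\mathcal{A}_{\bf P}(\psi):=-\mathcal{T}_{\bf P}\big(S_\epsilon[w_{\bf P}]+N_{\bf P}(\psi)\big)$, where $\mathcal{T}_{\bf P}$ is the solution operator of \eqref{problin} furnished by Lemma \ref{EsUn}, satisfying $\|\mathcal{T}_{\bf P}h\|_{*,{\bf P}}\le C\|h\|_{*,{\bf P}}$.

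The key estimates are then: first, by Lemma \ref{LemmaStimaErroreS} and the choice ${\bf P}\in\bar\Gamma_\epsilon$ (so that $|P_i-P_j|\ge\beta^2\epsilon\log\frac1{\epsilon^2}\ge 2\beta^2\epsilon\log\frac1\epsilon$ for $\epsilon$ small), one has $\|S_\epsilon[w_{\bf P}]\|_{*,{\bf P}}\le C\epsilon^{\beta^2(\beta^2+\sigma)}$, which — after absorbing the weight $w_{P_i}^{1-\beta^2}$ against $e^{\mu\min_i|x-P_i|/\epsilon}$ for $\mu$ small and $\beta$ close to $1$ — is bounded by $\epsilon^{\tau'}$ for some $\tau'>\tau=\beta^4(1+\sigma)$; here one uses $\beta<1$ so that $\beta^2(\beta^2+\sigma)>\beta^4(1+\sigma)$ fails in general, so the precise bookkeeping of exponents (comparing $\beta^2(\beta^2+\sigma)$ with $\beta^4(1+\sigma)=\beta^4+\beta^4\sigma$, using $\beta^2>\beta^4$ and $\beta^2\sigma>\beta^4\sigma$) is what pins down why $\tau$ is chosen as it is and why $\beta$ must be taken close enough to $1$. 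Second, the nonlinear term obeys $\|N_{\bf P}(\psi_1)-N_{\bf P}(\psi_2)\|_{*,{\bf P}}\le C\big(\|\psi_1\|_{*,{\bf P}}^{\min\{1,\sigma\}}+\|\psi_2\|_{*,{\bf P}}^{\min\{1,\sigma\}}+\epsilon^2\big)\|\psi_1-\psi_2\|_{*,{\bf P}}$ on $B_{\epsilon^\tau}$, so the Lipschitz constant of $\mathcal{A}_{\bf P}$ is $O(\epsilon^{\tau\min\{1,\sigma\}}+\epsilon^2)=o(1)$, and $\mathcal{A}_{\bf P}$ maps $B_{\epsilon^\tau}$ into itself since $\|\mathcal{A}_{\bf P}(\psi)\|_{*,{\bf P}}\le C(\epsilon^{\tau'}+\epsilon^{2\tau})<\epsilon^\tau$ for $\epsilon$ small. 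Banach's fixed point theorem then gives the unique $\psi_{\bf P}\in B_{\epsilon^\tau}$; uniqueness in the whole $H^2_{*,{\bf P}}$ (not just the small ball) follows because any solution must, by the linear estimate of Lemma \ref{Estnorm} applied to the difference, already lie in the ball where the contraction holds. Since $\|\psi_{\bf P}\|_\infty\le\|\psi_{\bf P}\|_{*,{\bf P}}\le C\epsilon^{\tau'}<\epsilon^\tau$, the first bound follows; the energy bound $(\psi_{\bf P},\psi_{\bf P})_\epsilon\le\epsilon^{N+2\tau}$ is obtained by testing the equation in \eqref{nsy} with $\psi_{\bf P}$, integrating by parts, using $\langle\psi_{\bf P},Z_{P_i,j}\rangle=0$ to kill the multipliers, and estimating $\int(\epsilon^2|\nabla\psi_{\bf P}|^2+\psi_{\bf P}^2)\le \|\psi_{\bf P}\|_{*,{\bf P}}\cdot\|S_\epsilon[w_{\bf P}]+N_{\bf P}(\psi_{\bf P})\|_{L^1}$ together with the fact that the weighted $L^\infty$ bounds integrate to an extra factor $\epsilon^N$.

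The continuity of ${\bf P}\mapsto\psi_{\bf P}$ and ${\bf P}\mapsto c_{i,j}({\bf P})$ follows from the uniform-in-${\bf P}$ contraction: writing the fixed point equations for two nearby configurations ${\bf P},{\bf P}'$ and subtracting, the difference $\|\psi_{\bf P}-\psi_{{\bf P}'}\|$ is controlled by $\|S_\epsilon[w_{\bf P}]-S_\epsilon[w_{{\bf P}'}]\|_{*}$, by the variation of the operator $\mathcal{T}_{\bf P}-\mathcal{T}_{{\bf P}'}$ (continuous in ${\bf P}$ because the coefficient matrix in \eqref{prodotto}–\eqref{sy} depends continuously and is uniformly invertible), and by the continuity of ${\bf P}\mapsto w_{\bf P}$ in the relevant norms — all of which are standard once the estimates above are in place; the multipliers $c_{i,j}({\bf P})$ are then recovered continuously from the linear system \eqref{sy}. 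I expect the main obstacle to be the careful treatment of the Poisson-nonlinearity contributions to $N_{\bf P}(\psi)$ in the weighted norm $\|\cdot\|_{*,{\bf P}}$ — in particular showing that $\phi[w_{\bf P}\psi]w_{\bf P}$ and $\phi[\psi^2]\psi$ do not destroy the weight, which is where Lemma \ref{PoissonSolutionBounded} and the $\epsilon^2$ gain in the Poisson potential (decisive for $N\le 6$, so that the Poisson term is a genuinely subcritical perturbation) are essential — together with the exponent bookkeeping that forces $\beta$ sufficiently close to $1$ so that $S_\epsilon[w_{\bf P}]$ is smaller than $\epsilon^\tau$.
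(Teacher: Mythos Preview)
Your proposal is essentially the same as the paper's proof: the same decomposition $S_\epsilon[w_{\bf P}+\psi]=S_\epsilon[w_{\bf P}]+\mathcal{L}_{\bf P}[\psi]+R[\psi]$, the same contraction argument on the ball $\{\|\psi\|_{*,{\bf P}}\le\epsilon^\tau\}$ via Lemma~\ref{EsUn} and Lemma~\ref{LemmaStimaErroreS}, and the same derivation of the $H^1$ bound by testing the equation with $\psi_{\bf P}$. The only notable difference is in the continuity step: the paper argues by taking a sequence ${\bf P}_n\to\bar{\bf P}$, extracting a weak $H^1$ limit, identifying it with $\psi_{\bar{\bf P}}$ by uniqueness, and upgrading to strong convergence via the energy identity, whereas you propose a direct difference estimate exploiting the uniform contraction constant---both are standard and either works here. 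One small expository slip: you write that $\beta^2(\beta^2+\sigma)>\beta^4(1+\sigma)$ ``fails in general'', but in fact it holds for every $\beta\in(0,1)$ since $\beta^2\sigma>\beta^4\sigma$, which is exactly why $\|S_\epsilon[w_{\bf P}]\|_{*,{\bf P}}=o(\epsilon^\tau)$ without further restriction on $\beta$ at this step.
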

\begin{proof}
We note that $$S_{\epsilon}[w_{\bf P}+\psi]=S_{\epsilon}[w_{\bf P}]+\mathcal{L}_{\bf P}[\psi]+R[\psi]$$ with $$R[\psi]= \left[f(w_{\bf P}+\psi)-f'(w_{\bf P})\psi-f(w_{\bf P})\right]-\left[\phi[{\psi}^2]w_{\bf P}+\phi[{\psi}^2]\psi+2\phi[w_{\bf P}\psi]\psi\right]$$ Hence (\ref{nsy}) can be written as
\begin{equation}\label{nsy1}
\left\{
\begin{array}{lr}
\mathcal{L}_{\bf P}[\psi]= \sum_{i, j}c_{i, j}Z_{P_{i}, j}-S_{\epsilon}[w_{\bf P}]-R[\psi],\\\\
\psi\in H^2_{*, {\bf P}},\,\, \langle \psi, Z_{P_{i}, j}\rangle=0,\,\,\ i=1, \ldots, \ell,\,\, j=1, \ldots, N.
\end{array}
\right.
\end{equation}
that is (\ref{problin}) with $$h=-S_{\epsilon}[w_{\bf P}]-R[\psi].$$ Let us consider the metric space $${\mathcal B}=\left\{\psi\in C(\mathbb R^N)\,\,:\,\, \|\psi\|_{*, {\bf P}}\leq \epsilon^{\tau}\right\}$$ endowed with the norm $\|\cdot\|_{*, {\bf P}}$.  For all $\psi_1, \psi_2\in {\mathcal B}$ $$\|R[\psi_1]-R[\psi_2]\|_{*, {\bf P}}\leq C\epsilon^{\sigma\tau}\|\psi_1-\psi_2\|_{*, {\bf P}}$$ and for all $\psi\in {\mathcal B}$
\begin{equation}\label{5.18}
\|R[\psi]\|_{*, {\bf P}}\leq C\epsilon^{\tau}\|\psi\|_{*, {\bf P}}\leq \epsilon^{(1+\sigma)\tau}.
 \end{equation}
 Moreover, by Lemma \ref{LemmaStimaErroreS} we have that
 \begin{equation}\label{5.19}
 \|S_{\epsilon}[\psi]\|_{*, {\bf P}}\leq \epsilon^{\beta^2(\beta^2+\sigma)}.
  \end{equation}
  Thus, for all $\psi\in {\mathcal B}$, let $(\mathcal{A}(\psi), c_{i, j})$ the unique solution of (\ref{problin}) with $$h=-S[w_{\bf P}]-R[\psi]$$ Then we claim that $\mathcal{A}$ maps $\mathcal B$ into $\mathcal B$ and $\mathcal{A}$ is a contraction. By lemma \ref{EsUn} and the choice of $\tau$ $$\|\mathcal{A}(\psi)\|_{*, {\bf P}}\leq C\|-S_{\epsilon}[w_{\bf P}]-R(\psi)\|_{*, {\bf P}}\leq C\left(\epsilon^{\beta^2(\beta^2+\sigma)}+\epsilon^{(1+\sigma)\tau}\right)\leq C\epsilon^{\tau}$$ for $\epsilon$ sufficiently small and so $\mathcal{A}(\psi)\in {\mathcal B}$. Moreover $\mathcal{A}(\psi_1)-\mathcal{A}(\psi_2)$ solves (\ref{problin}) with $h=-R[\psi_1]+R[\psi_2]$. Then by Lemma \ref{EsUn} $$\|\mathcal{A}(\psi_1)-\mathcal{A}(\psi_2)\|_{*, {\bf P}}\leq C\|R[\psi_1]-R[\psi_2]\|_{*, {\bf P}}\leq \epsilon^{\tau}\|\psi_1-\psi_2\|_{*, {\bf P}}.$$ and so, for $\epsilon$ small, $\mathcal{A}$ is a contraction. Thus, by applying the contraction mapping theorem we conclude. It remains to prove the $H^1-$ norm estimate of $\psi$. By multiplying (\ref{nsy1}) by $\psi_{\bf P}$ and integrating by parts we obtain
\begin{eqnarray}\label{5.20}
\nonumber
(\psi_{\bf P}, \psi_{\bf P})_{\epsilon}&=&\int_{\mathbb R^N}f'(w_{\bf P})\psi_{\bf P}^2\, dx -\int_{\mathbb R^N}\phi[w_{\bf P}^2]\psi_{\bf P}^2\, dx-2\int_{\mathbb R^N}\phi[w_{\bf P}\psi_{{\bf P}}]w_{\bf P}\psi_{\bf P}\, dx+\\
&&+\langle S_{\epsilon}[w_{\bf P}], \psi_{\bf P}\rangle +\langle R[\psi_{\bf P}], \psi_{\bf P}\rangle
\end{eqnarray}
By using the fact that $\psi_{\bf P}\in {\mathcal B}$, the estimates (\ref{5.19}) and (\ref{5.20}) and by making a change of variable we immediately get $$(\psi_{\bf P}, \psi_{\bf P})_{\epsilon}\leq C\epsilon^{N+2\tau}.$$ Then the family $\left\{\psi_{\bf P}\,\, :\,\, {\bf P}\in \bar{\Gamma}_{\epsilon}\right\}$ is bounded in $H^1$. Now fix $\epsilon>0$ and consider $\{{\bf P}_n\}\subset \bar{\Gamma}_{\epsilon}$ such that ${\bf P}_n \rightarrow \bar{P}\in \bar{\Gamma}_{\epsilon}$. Up to a subsequence, $\psi_{{\bf P}_n}\rightharpoonup {\bf \bar{\psi}}$ weakly in $H^1$; on the other hand, choosing $(m, q)$ such that, up to a subsequence $|c_{m, q}({\bf P}_n)|\geq |c_{i, j}({\bf P}_n)|$ for every $(i, j)$ and $n$, by using (\ref{prodotto}) we have
\begin{eqnarray*}
\left(\psi_{{\bf P}_n}, \frac{\partial w_{{\bf P}^n_m}}{\partial x_q}\right)_{\epsilon}&=&\int_{\mathbb R^N}f'(w_{{\bf P}_n})\psi_{{\bf P}_n}\frac{\partial w_{{\bf P}^n_m}}{\partial x_q}\, dx -\int_{\mathbb R^N}\phi[{w_{{\bf P}_n}^2}]\psi_{{\bf P}_n}\frac{\partial w_{{\bf P}^n_m}}{\partial x_q}\, dx\\
&&-2\int_{\mathbb R^N}\phi\left[w_{{\bf P}_n}\frac{\partial w_{{\bf P}^n_m}}{\partial x_q}\right]w_{{\bf P}_n}\frac{\partial w_{{\bf P}^n_m}}{\partial x_q}\, dx+\langle S_{\epsilon}[w_{{\bf P}_n}], \frac{\partial w_{{\bf P}^n_m}}{\partial x_q}\rangle \\
&&+\langle R[\psi_{{\bf P}_n}], \frac{\partial w_{{\bf P}^n_m}}{\partial x_q}\rangle-c_{m, q}({\bf P}_n)\left(\epsilon^{N-2}\left\|\frac{\partial w}{\partial x_1}\right\|^2+o\left(\epsilon^{N-2}\right)\right)
\end{eqnarray*}
by which we deduce that the sequence $\{c_{i, j}({\bf P}_n)\}$ is bounded too for every $(i, j)$. Assume, without loss of generality $c_{i, j}({\bf P}_n)\rightarrow \bar{c}_{i, j}$. Then $(\bar{\psi}, \bar{c}_{i, j})$ solves the equation
$$\mathcal{L}_{\bar{\bf P}}(\bar{\psi})=-S_{\epsilon}[w_{\bar{\bf P}}]-R[\bar{\psi}]+\sum_{i, j}\bar{c}_{i, j}Z_{\bar{P}_i, j}, \quad \langle \bar{\psi}, Z_{\bar{P}_i, j}\rangle=0,\quad \|\bar{\psi}\|_{*, {\bf P}}\leq \epsilon^{\tau}.$$ Hence, from uniqueness, it follows $\bar{\psi}=\psi_{\bar{\bf P}}$ and $\bar{c}_{i, j}=c_{i, j}(\bar{\bf P})$, By (\ref{5.20}) we get
\begin{eqnarray*}
\|\psi_{{\bf P}_n}\|^2 &\rightarrow&
\int_{\mathbb R^N}f'(w_{\bar{\bf P}})\psi_{\bar{\bf P}}^2\, dx -\int_{\mathbb R^N}\phi[{w_{\bar{\bf P}}^2}]\psi_{\bar{\bf P}}^2\, dx-2\int_{\mathbb R^N}\phi[w_{\bar{\bf P}}\psi_{\bar{\bf P}}]w_{\bar{\bf P}}\psi_{\bar{\bf P}}\, dx+\\
&&+\langle S_{\epsilon}[w_{\bar{\bf P}}], \psi_{\bar{\bf P}}\rangle +\langle R[\psi_{\bar{\bf P}}], \psi_{\bar{\bf P}}\rangle=\|\bar{\psi}\|^2,
\end{eqnarray*}
hence we deduce $\psi_{\bf P_n}\rightarrow \psi_{\bar{\bf P}}$ in $H^1$.
\end{proof}
\begin{lemma}
For $\epsilon>0$ sufficiently small the map ${\bf P}\in \bar{\Gamma}_{\epsilon}\longmapsto \psi_{\bf P}\in H^1$ constructed in Lemma \ref{fdr} is $C^1$.
\end{lemma}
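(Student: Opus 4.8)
The plan is to deduce the $C^1$ dependence from the implicit function theorem applied to equation \eqref{nsy1}, rather than re‑running the contraction scheme of Lemma \ref{fdr} with parameters. Fix a reference configuration ${\bf P}_0\in\bar\Gamma_\epsilon$. For ${\bf P}$ in a small neighborhood $U$ of ${\bf P}_0$ the quantities $\min_i|x-P_i|$ and $\min_i|x-P_i^0|$ differ by at most $\max_i|P_i-P_i^0|$, so the weighted norms $\|\cdot\|_{*,{\bf P}}$ and $\|\cdot\|_{*,{\bf P}_0}$ are equivalent with constants uniform on $U$; hence we may work in the \emph{fixed} spaces $C_{*,{\bf P}_0}$ and $H^2_{*,{\bf P}_0}$. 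Define, for $({\bf P},\psi,{\bf c})$ in a neighborhood of $({\bf P}_0,\psi_{{\bf P}_0},c_{i,j}({\bf P}_0))$ inside $\mathbb R^{N\ell}\times H^2_{*,{\bf P}_0}\times\mathbb R^{N\ell}$,
$$
\mathcal G({\bf P},\psi,{\bf c}):=\Big(\,S_\epsilon[w_{\bf P}+\psi]-\sum_{i,j}c_{i,j}Z_{P_i,j}\,,\ \big(\langle\psi,Z_{P_i,j}\rangle\big)_{i,j}\,\Big)\ \in\ C_{*,{\bf P}_0}\times\mathbb R^{N\ell}.
$$
By construction $\mathcal G({\bf P},\psi_{\bf P},c_{i,j}({\bf P}))=0$ for every ${\bf P}\in\bar\Gamma_\epsilon$, by Lemma \ref{fdr}.

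Next one checks that $\mathcal G$ is of class $C^1$ jointly in $({\bf P},\psi,{\bf c})$. It is affine in ${\bf c}$. In $\psi$ it is $C^1$ because $f\in C^{1+\sigma}_{loc}(\mathbb R)\cap C^2(\mathbb R)$, and because the Poisson nonlinearities appearing in $S_\epsilon[w_{\bf P}+\psi]$ (namely $\phi[(w_{\bf P}+\psi)^2](w_{\bf P}+\psi)$) are smooth in $\psi$ with derivatives controlled in the weighted norm via Lemma \ref{esistenzaphi} and Lemma \ref{PoissonSolutionBounded} together with the exponential decay \eqref{dec} of $w$. In ${\bf P}$ the maps $P_i\mapsto w((x-P_i)/\epsilon)$ and $P_i\mapsto Z_{P_i,j}(x)=f'(w_{P_i})\partial_{x_j}w_{P_i}$ are $C^1$ with values in $C_{*,{\bf P}_0}$: differentiating $Z_{P_i,j}$ in $P_i$ produces terms with $f''$ and second derivatives of $w$, all admissible since $f\in C^2$ and $w$ is smooth by elliptic regularity, and all of them decay exponentially, hence stay in the weighted space. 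Likewise $S_\epsilon[w_{\bf P}]=\epsilon^2\Delta w_{\bf P}-w_{\bf P}+f(w_{\bf P})-\phi[w_{\bf P}^2]w_{\bf P}$ is $C^1$ in ${\bf P}$ by the same considerations and Lemma \ref{PoissonSolutionBounded}. Then I would compute the partial differential in $(\psi,{\bf c})$ at the solution. Using the splitting $S_\epsilon[w_{\bf P}+\psi]=S_\epsilon[w_{\bf P}]+\mathcal L_{\bf P}[\psi]+R[\psi]$, one finds
$$
D_{(\psi,{\bf c})}\mathcal G({\bf P},\psi_{\bf P},c_{i,j}({\bf P}))[\phi,{\bf d}]=\Big(\mathcal L_{\bf P}[\phi]+DR[\psi_{\bf P}][\phi]-\sum_{i,j}d_{i,j}Z_{P_i,j}\,,\ \big(\langle\phi,Z_{P_i,j}\rangle\big)_{i,j}\Big).
$$
By Lemma \ref{EsUn} (together with the a priori bound of Lemma \ref{Estnorm}) the operator associated to $\mathcal L_{\bf P}$ with the orthogonality constraints is invertible with inverse bounded uniformly in ${\bf P}$ and $\epsilon$; since the Lipschitz estimate on $R$ and $\|\psi_{\bf P}\|_{*,{\bf P}}\le\epsilon^\tau$ give $\|DR[\psi_{\bf P}]\|\le C\epsilon^{\sigma\tau}=o(1)$, a Neumann series argument shows $D_{(\psi,{\bf c})}\mathcal G$ is still an isomorphism for $\epsilon$ small.

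The implicit function theorem then provides, in a neighborhood of each ${\bf P}_0$, a unique $C^1$ map ${\bf P}\mapsto(\widetilde\psi({\bf P}),\widetilde c_{i,j}({\bf P}))$ with $\mathcal G({\bf P},\widetilde\psi({\bf P}),\widetilde c_{i,j}({\bf P}))=0$; by the uniqueness part of Lemma \ref{fdr} it coincides with $({\bf P}\mapsto\psi_{\bf P},\ {\bf P}\mapsto c_{i,j}({\bf P}))$. Since ${\bf P}_0\in\bar\Gamma_\epsilon$ was arbitrary and $H^2_{*,{\bf P}_0}\hookrightarrow H^1(\mathbb R^N)$ continuously, the map ${\bf P}\in\bar\Gamma_\epsilon\mapsto\psi_{\bf P}\in H^1(\mathbb R^N)$ (and ${\bf P}\mapsto c_{i,j}({\bf P})$) is $C^1$. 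The genuinely delicate point — the part I expect to be the main obstacle — is the bookkeeping forced by the ${\bf P}$‑dependence of the weighted spaces: one must verify that \emph{all} the ${\bf P}$‑derivatives of $w_{\bf P}$, of $Z_{P_i,j}$, of $S_\epsilon[w_{\bf P}]$ and of the Poisson terms land in a single fixed weighted space with norms controlled uniformly for ${\bf P}$ near ${\bf P}_0$. This is exactly where the exponential decay \eqref{dec} and Lemmas \ref{esistenzaphi} and \ref{PoissonSolutionBounded} are used; the alternative route of differentiating the fixed‑point identity $\psi_{\bf P}=\mathcal A(\psi_{\bf P})$ directly would require the same estimates, so nothing is gained by avoiding the implicit function theorem.
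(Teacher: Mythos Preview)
Your approach and the paper's are the same at heart: both invoke the implicit function theorem on the equation $S_\epsilon[w_{\bf P}+\psi]=\sum c_{i,j}Z_{P_i,j}$ together with the orthogonality constraints, and both rely on the invertibility of the linearized operator established in Lemma \ref{EsUn}. The paper, however, packages the map differently: it sets
\[
T({\bf P},\psi,c_{i,j})=\Big((\epsilon^2\Delta-1)^{-1}\big(S_\epsilon[w_{\bf P}+\psi]+\textstyle\sum_{i,j}c_{i,j}\frac{\partial w_{P_i}}{\partial x_j}\big),\ \big(\psi,\tfrac{\partial w_{P_i}}{\partial x_j}\big)_\epsilon\Big)
\]
with target the \emph{fixed} space $H^1(\mathbb R^N)\times\mathbb R^{N\ell}$, and then cites \cite{DW} for the verification of the IFT hypotheses. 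Composing with the resolvent $(\epsilon^2\Delta-1)^{-1}$ and landing in $H^1$ buys two things at once: the target no longer depends on ${\bf P}$, so the ``genuinely delicate point'' you flag (tracking ${\bf P}$-derivatives through ${\bf P}$-dependent weighted spaces) simply evaporates; and it removes a mapping issue in your formulation, namely that for a generic $\psi\in H^2_{*,{\bf P}_0}=H^2\cap C_{*,{\bf P}_0}$ the term $\epsilon^2\Delta\psi$ lies only in $L^2$, so $S_\epsilon[w_{\bf P}+\psi]$ need not belong to $C_{*,{\bf P}_0}$ and your $\mathcal G$ is not obviously well defined into the stated codomain. Your argument is morally correct and can be repaired (e.g.\ by applying $(\epsilon^2\Delta-1)^{-1}$ as the paper does, or by working in an $L^2$-based target), but the paper's formulation is the cleaner route.
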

\begin{proof}
To prove that the map ${\bf P}\in \Gamma_{\epsilon}\rightarrow \psi_{\bf P}\in H^1$ is $C^1$ consider the following map $T: \Gamma_{\epsilon}\times H^1(\mathbb R^N)\times\mathbb R^{N\ell}\rightarrow H^1(\mathbb R^N)\times \mathbb R^{N\ell}:$
\begin{equation}
T({\bf P}, \psi_{\bf P}, c_{i, j})=
\left(
\begin{array}{lr}
(\epsilon^2\Delta-1)^{-1}\left(S_{\epsilon}[w_{\bf P}+\psi_{\bf P}]+\sum_{i, j}c_{i, j}\frac{\partial w_{P_i}}{\partial x_j}\right)\\\\
\left(\psi_{\bf P}, \frac{\partial w_{P_i}}{\partial x_j}\right)_{\epsilon}
\end{array}
\right)
\end{equation}
where $v=(\epsilon^2\Delta-1)^{-1}(h)$ is defined as the unique solution $u\in H^1$ of $\epsilon^2\Delta v-v=h$. Since $-\epsilon^2\Delta\frac{\partial w_{P_i}}{\partial x_j}-\frac{\partial w_{P_i}}{\partial x_j}=-Z_{P_i, j}$ it is immediate that $(\psi, c_{i, j})$ solves the system (\ref{nsy}) if and only if $T({\bf P}, \psi, c_{i, j})=0$. The thesis will follow by applying the Implicit Function Theorem (see \cite{DW}).
\end{proof}

\

\section{Reduced energy functional}\label{sectionReduced}
For $\epsilon>0$ sufficiently small we define the reduced functional
$M_{\epsilon}:\bar{\Gamma}_{\epsilon}\rightarrow\mathbb R$ \begin{equation}
\label{funzioRidot}
 M_{\epsilon}[{\mathbf P}]:=\epsilon^{-N} J_{\epsilon}[w_{{\bf P}}+\psi_{{\bf P}}]-\ell I[w]-\epsilon^2 C_1,\end{equation}
where $\psi_{{\bf P}}$ has been constructed in Lemma \ref{fdr} and $C_1$ is given by Proposition \ref{4.1}.\\

Next proposition contains the key expansion of $M_{\epsilon}$

\begin{proposition}\label{ridotto generale}
For $\epsilon>0$ sufficiently small the following holds:
\[M_{\epsilon}[{\bf P}]= - (\gamma_0+o(1))\sum_{i\neq j}\lambda_i\lambda_jw\left(\frac{P_i-P_j}{\epsilon}\right) \,+\, \epsilon^{2}(C_2+o(1))\sum_{i\neq j}\frac{1}{|\frac{P_i-P_j}{\epsilon}|^{N-2}}
\, +\,  O(\epsilon^{2\tau}),\]
uniformly for ${\bf P}\in \bar{\Gamma}_{\epsilon},$ where $\tau= \beta^4(1+\sigma)$ is given by Lemma \ref{fdr} and $\gamma_0,C_2$ are the constants in Proposition \ref{4.1}.
\end{proposition}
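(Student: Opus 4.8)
The plan is to substitute the definition $M_{\epsilon}[{\bf P}] = \epsilon^{-N}J_{\epsilon}[w_{\bf P}+\psi_{\bf P}] - \ell I[w] - \epsilon^2 C_1$ and expand $J_{\epsilon}[w_{\bf P}+\psi_{\bf P}]$ around $J_{\epsilon}[w_{\bf P}]$, controlling the remainder via the estimates on $\psi_{\bf P}$ from Lemma \ref{fdr}. First I would write, by Taylor expansion of the $C^2$ functional $J_{\epsilon}$,
\[
J_{\epsilon}[w_{\bf P}+\psi_{\bf P}] = J_{\epsilon}[w_{\bf P}] + J_{\epsilon}'[w_{\bf P}][\psi_{\bf P}] + \frac12 J_{\epsilon}''[\xi][\psi_{\bf P},\psi_{\bf P}]
\]
for some $\xi$ between $w_{\bf P}$ and $w_{\bf P}+\psi_{\bf P}$. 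The first-order term is handled by recalling that $J_{\epsilon}'[w_{\bf P}][\psi_{\bf P}] = -\langle S_{\epsilon}[w_{\bf P}], \psi_{\bf P}\rangle$, which by Lemma \ref{LemmaStimaErroreS} together with the orthogonality $\langle \psi_{\bf P}, Z_{P_i,j}\rangle = 0$ and the weighted-norm bound $\|\psi_{\bf P}\|_{*,{\bf P}}\le \epsilon^{\tau}$ is of order $\epsilon^{N}\cdot\epsilon^{\beta^2(\beta^2+\sigma)}\cdot\epsilon^{\tau}$, hence $o(\epsilon^{N+2\tau})$ after dividing by $\epsilon^N$ since $\beta$ is close to $1$; alternatively one uses $(\psi_{\bf P},\psi_{\bf P})_{\epsilon}\le \epsilon^{N+2\tau}$ directly. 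The quadratic term is bounded, using the continuity of $J_{\epsilon}''$ and again $(\psi_{\bf P},\psi_{\bf P})_{\epsilon}\le \epsilon^{N+2\tau}$ together with Lemma \ref{PoissonSolutionBounded} to control the Poisson contributions, by $C\epsilon^{N+2\tau}$. Dividing by $\epsilon^N$, all $\psi_{\bf P}$-dependent corrections are absorbed into $O(\epsilon^{2\tau})$.

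It then remains to expand $\epsilon^{-N}J_{\epsilon}[w_{\bf P}]$. By definition $J_{\epsilon}[w_{\bf P}]$ is the sum of the Schr\"odinger energy $\int(\frac12(\epsilon^2|\nabla w_{\bf P}|^2 + w_{\bf P}^2) - F(w_{\bf P}))\,dx$ and the Poisson energy $\frac14\int\int w_{\bf P}^2(x)w_{\bf P}^2(y)|x-y|^{-(N-2)}\,dx\,dy$. These are exactly the two quantities estimated in Proposition \ref{4.1}: the first equals $\epsilon^N\ell I[w] - \epsilon^N(\gamma_0+o(1))\sum_{i\neq j}\lambda_i\lambda_j w(\frac{P_i-P_j}{\epsilon})$ and the second equals $\epsilon^{N+2}C_1 + \epsilon^{N+2}(C_2+o(1))\sum_{i\neq j}|\frac{P_i-P_j}{\epsilon}|^{-(N-2)}$. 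Subtracting $\ell I[w] + \epsilon^2 C_1$ after dividing by $\epsilon^N$ cancels the two leading constants and produces precisely the claimed formula
\[
M_{\epsilon}[{\bf P}] = -(\gamma_0+o(1))\sum_{i\neq j}\lambda_i\lambda_j w\left(\frac{P_i-P_j}{\epsilon}\right) + \epsilon^2(C_2+o(1))\sum_{i\neq j}\frac{1}{|\frac{P_i-P_j}{\epsilon}|^{N-2}} + O(\epsilon^{2\tau}).
\]
One should check that the $o(1)$ terms and the $O(\epsilon^{2\tau})$ term are uniform over $\bar{\Gamma}_{\epsilon}$; this follows because the constraint defining $\Gamma_{\epsilon}$ forces $|\frac{P_i-P_j}{\epsilon}|\to\infty$ uniformly, so the asymptotic statements of Proposition \ref{4.1} and the $\psi_{\bf P}$-estimates of Lemma \ref{fdr} apply uniformly.

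The main obstacle I expect is bookkeeping the exponents to confirm that every error term is genuinely negligible compared to the two main terms on $\bar{\Gamma}_{\epsilon}$, in particular verifying that $\epsilon^{\beta^2(\beta^2+\sigma)+\tau} = o(\epsilon^{2\tau})$ and that the first-order term $\langle S_{\epsilon}[w_{\bf P}],\psi_{\bf P}\rangle$ is small enough — this uses the specific choice $\tau = \beta^4(1+\sigma)$ and $\beta$ close to $1$, and is where the delicate interplay between the rate $\beta^2(\beta^2+\sigma)$ from Lemma \ref{LemmaStimaErroreS} and the reduction exponent $\tau$ matters. A secondary technical point is that the Poisson part of the Taylor remainder involves mixed terms like $\phi[w_{\bf P}\psi_{\bf P}]$ and $\phi[\psi_{\bf P}^2]$; these are controlled by the Hardy–Littlewood–Sobolev inequality (or by Lemma \ref{PoissonSolutionBounded}-type bounds) combined with the $H^1$ smallness of $\psi_{\bf P}$, but one must be careful that these also end up inside $O(\epsilon^{2\tau})$ rather than competing with the $\epsilon^2$-scale Poisson term, which again follows from $2\tau > 2$ being false in general — so in fact one keeps them simply as $O(\epsilon^{2\tau})$ without comparing to $\epsilon^2$, since the statement only claims an $O(\epsilon^{2\tau})$ error and both main terms dominate it on $\bar\Gamma_\epsilon$.
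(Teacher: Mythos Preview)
Your proposal is correct and follows essentially the same route as the paper: expand $J_{\epsilon}[w_{\bf P}+\psi_{\bf P}]$ around $J_{\epsilon}[w_{\bf P}]$, bound the first-order piece $-\langle S_{\epsilon}[w_{\bf P}],\psi_{\bf P}\rangle$ via Lemma~\ref{LemmaStimaErroreS} together with $\|\psi_{\bf P}\|_{\infty}<\epsilon^{\tau}$, bound the quadratic remainder by $(\psi_{\bf P},\psi_{\bf P})_{\epsilon}\le \epsilon^{N+2\tau}$ (using Lemma~\ref{PoissonSolutionBounded} for the Poisson pieces), and then apply Proposition~\ref{4.1} to $J_{\epsilon}[w_{\bf P}]$. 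The only cosmetic difference is that the paper writes out the expansion of $J_{\epsilon}[w_{\bf P}+\psi_{\bf P}]$ term by term rather than invoking an abstract Taylor remainder $J_{\epsilon}''[\xi][\psi_{\bf P},\psi_{\bf P}]$; in particular it isolates the linear Poisson cross term $\int\!\!\int w_{\bf P}^2(x)w_{\bf P}(y)\psi_{\bf P}(y)|x-y|^{-(N-2)}\,dx\,dy$ and bounds it directly by $C\epsilon^{N+2}\|\psi_{\bf P}\|_{\infty}$, whereas in your formulation this term sits inside $J_{\epsilon}'[w_{\bf P}][\psi_{\bf P}]=-\langle S_{\epsilon}[w_{\bf P}],\psi_{\bf P}\rangle$ and is controlled through Lemma~\ref{LemmaStimaErroreS}. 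Two small remarks: the orthogonality $\langle \psi_{\bf P},Z_{P_i,j}\rangle=0$ plays no role in bounding $\langle S_{\epsilon}[w_{\bf P}],\psi_{\bf P}\rangle$ and can be dropped from that step; and your exponent check $\beta^2(\beta^2+\sigma)\ge \tau=\beta^4(1+\sigma)$ (equivalent to $\sigma\ge \beta^2\sigma$) is exactly what the paper uses to pass from $\epsilon^{\beta^2(\beta^2+\sigma)}$ to $\epsilon^{\tau}$.
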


\begin{proof} An easy computation gives
\begin{eqnarray*}
&&\hskip-0.7cm  J_{\epsilon}(w_{{\bf P}}+\psi_{{\bf P}}) 
= J_{\epsilon}(w_{{\bf P}}) - \int_{\mathbb R^N} S_{\epsilon}[w_{{\bf P}}]\psi_{{\bf P}}\, dx + \frac{1}{2}(\psi_{{\bf P}},\psi_{{\bf P}})_{\epsilon} - \int_{\mathbb R^N} \left(F(w_{{\bf P}}+\psi_{{\bf P}})-F(w_{{\bf P}})-f(w_{{\bf P}})\psi_{{\bf P}}\right)\, dx\\
&& + \int_{\mathbb R^N} \psi_{{\bf P}}^2\left(\frac{1}{4} \phi[\psi_{{\bf P}}^2]+\frac{1}{2}\phi[w_{{\bf P}}^2](x)+\phi[w_{{\bf P}}\psi_{{\bf P}}]\right)\, dx+ \int_{\mathbb R^N}\!\!\int_{\mathbb R^N} \frac{w_{{\bf P}}(x)\psi_{{\bf P}}(x)w_{{\bf P}}(y)\psi_{{\bf P}}(y)}{|x-y|^{N-2}}\, dx\, dy\\
&& + \int_{\mathbb R^N}\!\!\int_{\mathbb R^N} \frac{w_{{\bf P}}^2(x)w_{{\bf P}}(y)\psi_{{\bf P}}(y)}{|x-y|^{N-2}}\, dx\, dy.\\
\end{eqnarray*}
By Lemma \ref{LemmaStimaErroreS} we have
\begin{eqnarray*}&& \hskip-0.7cm \left|\int_{\mathbb R^N} S_{\epsilon}[w_{{\bf P}}]\psi_{{\bf P}}\, dx\right| \leq C \epsilon ^{\beta^2(\beta^2+\sigma)} \|\psi_{{\bf P}}\|_{\infty}\sum_{i=1}^{\ell}\int_{\mathbb R^N} w_{P_i}^{1-\beta^2}\, dx
\leq C \epsilon ^{\tau} \|\psi_{{\bf P}}\|_{\infty}\sum_{i=1}^{\ell}\int_{\mathbb R^N} w_{P_i}^{1-\beta^2}\, dx = \tilde C \epsilon ^{N+\tau} \|\psi_{{\bf P}}\|_{\infty}.\end{eqnarray*}
Moreover, since $\left|F(w_{{\bf P}}+\psi_{{\bf P}})-F(w_{{\bf P}})-f(w_{{\bf P}})\psi_{{\bf P}}\right|\leq C|\psi_{{\bf P}}|^2,$ one can estimate $$\left|\int_{\mathbb R^N} \left(F(w_{{\bf P}}+\psi_{{\bf P}})-F(w_{{\bf P}})-f(w_{{\bf P}})\psi_{{\bf P}}\right|)\, dx\right|\leq C\|\psi_{{\bf P}}\|^2.$$
It's also easy to see that
$$\int_{\mathbb R^N} \psi_{{\bf P}}^2\left(\frac{1}{4} \phi[\psi_{{\bf P}}^2]+\frac{1}{2}\phi[w_{{\bf P}}^2](x)+\phi[w_{{\bf P}}\psi_{{\bf P}}]\right)\, dx+  \int_{\mathbb R^N}\!\!\int_{\mathbb R^N} \frac{w_{{\bf P}}(x)\psi_{{\bf P}}(x)w_{{\bf P}}(y)\psi_{{\bf P}}(y)}{|x-y|^{N-2}}\,dx\,dy
\leq  C\|\psi_{{\bf P}}\|^2.$$
Last, similarly as in the proof of \eqref{stimaEnergiaPoisson}  (using now \eqref{laSeconda} instead of \eqref{laPrima}), one has

\begin{eqnarray*}\int_{\mathbb R^N}\!\!\int_{\mathbb R^N} \frac{w_{{\bf P}}^2(x)w_{{\bf P}}(y)\psi_{{\bf P}}(y)}{|x-y|^{N-2}}\,dx\,dy&\leq& \|\psi_{\bf P}\|_{\infty}
\int_{\mathbb R^N}\!\!\int_{\mathbb R^N} \frac{w_{{\bf P}}^2(x)w_{{\bf P}}(y)}{|x-y|^{N-2}}\,dx\,dy\leq C\epsilon^{2+N} \|\psi_{\bf P}\|_{\infty}
\sum_{i\neq j}\frac{1}{|\frac{P_j-P_i}{\epsilon}|^{N-2}}\\
&\leq& C\epsilon^{2+N} \|\psi_{\bf P}\|_{\infty}
\sum_{i\neq j}\frac{1}{\log{(\frac{1}{\epsilon^{2\beta^2}})}^{N-2}}\leq C\epsilon^{2+N} \|\psi_{\bf P}\|_{\infty}.\end{eqnarray*}
Hence by Lemma \ref{fdr} 
(observe that by our assumptions $2+\tau>2\tau$) one obtains
\[J_{\epsilon}(w_{{\bf P}}+\psi_{{\bf P}}) = J_{\epsilon}(w_{{\bf P}})+ O(\epsilon^{N+2\tau}),
\]
and the thesis easily follows from Proposition \ref{4.1}.
\end{proof}

%
%
%
%
%
%
%

We recall the following result (whose proof can be found for instance in \cite{DW}) that will be useful in the next sections in order to find a critical point (a maximum) of $M_{\epsilon}$ under symmetry assumptions
\begin{lemma}\label{Lemma Massimo Ridotto}
Fix a positive constant $C>0$ and consider the function
$$\alpha_{\epsilon, C}(\rho):=-\gamma_0w(\rho)+C\epsilon^2\frac{1}{\rho^{N-2}},\ \ \ \rho\geq \beta^2\log{\frac{1}{\epsilon^2}},$$
where $\gamma_0$ is the positive constants introduced in Proposition \ref{4.1}. Then for $\epsilon>0$ small enough, $\alpha_{\epsilon,C}$ has a unique maximum point $\rho_{\epsilon}.$ Moreover we have
$$\rho_{\epsilon}=\log{\frac{1}{\epsilon^2}}+\frac{N-1}{2}\log{\log{\frac{1}{\epsilon^2}}}+o\left(\log{\log{\frac{1}{\epsilon^2}}}\right)$$
and
$$\alpha_{\epsilon,C}(\rho_{\epsilon})=C\epsilon^2\frac{1}{\left( \log{\frac{1}{\epsilon^2}}\right)^{N-2}}(1+o(1)).$$
\end{lemma}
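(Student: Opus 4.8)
The plan is to treat $\alpha_{\epsilon,C}$ as a one-variable calculus problem on the half-line $\rho \geq \beta^2\log\frac{1}{\epsilon^2}$, using the precise asymptotics \eqref{dec} for $w$ and $w''$. First I would write the derivative
\[
\alpha_{\epsilon,C}'(\rho) = -\gamma_0 w'(\rho) - (N-2) C\epsilon^2 \rho^{-(N-1)}.
\]
Since $w'(\rho) = -A_N \rho^{-(N-1)/2} e^{-\rho}(1+O(1/\rho))$ is strictly negative, the first term $-\gamma_0 w'(\rho)$ is strictly positive and decays like $e^{-\rho}$ up to an algebraic prefactor, while the second term is negative and decays only polynomially in $\rho$. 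So at $\rho = \beta^2\log\frac{1}{\epsilon^2}$ one has $e^{-\rho} = \epsilon^{2\beta^2}$, which (since $\beta < 1$) dominates $\epsilon^2$; hence $\alpha_{\epsilon,C}'$ is positive at the left endpoint for $\epsilon$ small. For large $\rho$ the exponential term is negligible and $\alpha_{\epsilon,C}' < 0$. Thus a maximum point $\rho_\epsilon$ exists in the interior.

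For uniqueness I would examine the equation $\alpha_{\epsilon,C}'(\rho) = 0$, i.e.
\[
-\gamma_0 w'(\rho) = (N-2) C \epsilon^2 \rho^{-(N-1)}.
\]
Taking logarithms and using $w'(\rho) = -A_N\rho^{-(N-1)/2}e^{-\rho}(1+O(1/\rho))$, this becomes
\[
\rho + \tfrac{N-1}{2}\log\rho = \log\tfrac{1}{\epsilon^2} + \log\tfrac{\gamma_0 A_N}{(N-2)C} + (N-1)\log\rho + O(1/\rho),
\]
that is, $\rho = \log\frac{1}{\epsilon^2} + \frac{N-1}{2}\log\rho + O(1)$. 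A bootstrap/fixed-point argument then gives $\rho_\epsilon = \log\frac{1}{\epsilon^2} + \frac{N-1}{2}\log\log\frac{1}{\epsilon^2} + o\big(\log\log\frac{1}{\epsilon^2}\big)$, which is the claimed asymptotics; uniqueness follows because the map $\rho \mapsto \rho + \frac{N-1}{2}\log\rho$ is strictly increasing (so the critical equation has at most one root in the relevant range), once $\epsilon$ is small enough that the $O(1/\rho)$ error does not spoil monotonicity. Alternatively one checks $\alpha_{\epsilon,C}'' < 0$ near any critical point using \eqref{dec} for $w''$.

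Finally, to evaluate $\alpha_{\epsilon,C}(\rho_\epsilon)$ I would substitute the asymptotics of $\rho_\epsilon$ into both terms. From \eqref{dec}, $w(\rho_\epsilon) = A_N \rho_\epsilon^{-(N-1)/2} e^{-\rho_\epsilon}(1+O(1/\rho_\epsilon))$; since $e^{-\rho_\epsilon} = \epsilon^2 (\log\frac{1}{\epsilon^2})^{-(N-1)/2}(1+o(1))$ and $\rho_\epsilon^{-(N-1)/2} = (\log\frac{1}{\epsilon^2})^{-(N-1)/2}(1+o(1))$, we get $w(\rho_\epsilon) = O\big(\epsilon^2 (\log\frac{1}{\epsilon^2})^{-(N-1)}\big)$, which is of smaller order than $C\epsilon^2 \rho_\epsilon^{-(N-2)} = C\epsilon^2(\log\frac{1}{\epsilon^2})^{-(N-2)}(1+o(1))$. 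Hence $\alpha_{\epsilon,C}(\rho_\epsilon) = C\epsilon^2 (\log\frac{1}{\epsilon^2})^{-(N-2)}(1+o(1))$, as asserted. The main obstacle is the bookkeeping in the bootstrap for $\rho_\epsilon$: one must be careful that the implicit $O(1/\rho)$ and $o(1)$ terms coming from \eqref{dec} are genuinely absorbed and do not interfere with the leading $\log\log$ correction; everything else is routine single-variable estimation.
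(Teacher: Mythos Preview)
The paper does not actually give a proof of this lemma: it simply states the result and refers the reader to \cite{DW}. Your proposal is a correct, self-contained calculus argument that would serve perfectly well in place of the citation.

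A couple of small remarks on the write-up. After you derive the critical-point equation and take logarithms, the clean rearrangement is
\[
\rho - \tfrac{N-1}{2}\log\rho \;=\; \log\tfrac{1}{\epsilon^2} + O(1),
\]
and it is the monotonicity of $\rho \mapsto \rho - \tfrac{N-1}{2}\log\rho$ (for $\rho > \tfrac{N-1}{2}$) that yields uniqueness; you wrote $\rho + \tfrac{N-1}{2}\log\rho$, which is of course also increasing, but is not the map that arises once both $\log\rho$ terms are collected on one side. Also, when you compute $e^{-\rho_\epsilon}$ with $\rho_\epsilon = \log\tfrac{1}{\epsilon^2} + \tfrac{N-1}{2}\log\log\tfrac{1}{\epsilon^2} + o(\log\log\tfrac{1}{\epsilon^2})$, the $o(\log\log)$ correction exponentiates to a factor $(\log\tfrac{1}{\epsilon^2})^{o(1)}$, not $1+o(1)$; this does not affect your conclusion, since you only need $w(\rho_\epsilon)$ to be of strictly smaller order than $C\epsilon^2\rho_\epsilon^{-(N-2)}$, and a factor $(\log\tfrac{1}{\epsilon^2})^{-1+o(1)}$ still tends to zero. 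With these two cosmetic fixes the argument is complete.
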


\

\

\section{Proof of Theorem \ref{theorem poligono piano} }
For every $x\in\mathbb R^N$ we set $x=(x_1,\ldots ,x_N)=(x_1,x_2,x')=(z,x'),$ where $z\in \mathbb C.$ \\In this section we prove the existence of a cluster solution to \eqref{eqScalare} having a positive bump in $0$ and $k$ negative bumps at the vertices of a regular polygon centered in $0$.

Precisely in this case  $\ell=k+1$ and and we look for a solution of the form \eqref{generalansatz} making the following {\it ansatz}  
\begin{equation}\label{ansatzSimmetrico1}
v=w_{\bf P}+\psi_{\epsilon} 
\end{equation}
where
\[w_{\bf P}=w_0-\sum_{i=1}^kw_{P_i}\]
and $P_i:=rQ_i\in\mathbb R^N,$
$Q_i:=(e^{2\pi \sqrt{-1}(i-1)/k}, {\bf 0})=\left(\cos{\frac{2\pi(i-1)}{k}},\sin{\frac{2\pi(i-1)}{k}}, {\bf 0}\right)\in\mathbb R^N,$ $i=1,\ldots,k,$ $0=P_{k+1}=(0,0,{\bf 0})$
and  $r\in R_{\epsilon}:=\left\{r >0\ : \,\, \frac{\beta^2}{\beta_k}\epsilon\log{\frac{1}{\epsilon^2}}<r <\frac{1}{2}\epsilon \left(\log{\frac{1}{\epsilon^2}}\right)^2
\right\},$ for   $\beta_k:=2\sin{\frac{\pi}{k}}$.\\

Observe that with this choice of $r$ the point ${\bf P}_r:=(P_1,\ldots,P_k,P_{k+1})\in \Gamma_{\epsilon}(\subset \mathbb R^{(k+1)N})$, where $\Gamma_{\epsilon}$ is the configurations set introduced in \eqref{gammeepsilon}. Indeed, by the definition of $\Gamma_{\epsilon}$, ${\bf P}_r\in \Gamma_{\epsilon}$ if and only if
\[\left\{
\begin{array}{lr}
 \beta^2\epsilon\log{\frac{1}{\epsilon^2}}<r <\epsilon \left(\log{\frac{1}{\epsilon^2}}\right)^2\\
\beta^2\epsilon\log{\frac{1}{\epsilon^2}}<r|Q_i-Q_j| <\epsilon \left(\log{\frac{1}{\epsilon^2}}\right)^2,\quad i\neq j
\end{array}
\right.\] and by the assumption $k\geq 7$, it follows that $\beta_k<1$  and moreover it is not difficult to see that
 $\beta_k\leq |Q_i-Q_j|\leq 2$ for $i\neq j$. As a consequence we are in the good framework to obtain all the results in Sects. \ref{section linearized} and \ref{Section finite reduction}. 
\\

In addition here we look for a solution $v$ satisfying also the following symmetry properties
\[v(x_1,x_2,x')=v(z,|x'|)=v(ze^{2\pi\sqrt{-1}/k},|x'|)\]
\[v(x_1,x_2,x')=v(x_1,-x_2,x').\]
This translates into restricting to work into the following Sobolev space of symmetric functions
\[X:=\{v\in H^1(\mathbb R^N): v(x_1,x_2,x')=v(z,|x'|)=v(ze^{2\pi\sqrt{-1}/k},|x'|),\ v(x_1,x_2,x')=v(x_1,-x_2,x')\}.\]
Hence, for every $r\in R_{\epsilon},$ we set
\begin{equation}\label{spazioSimmetrico1}H^2_{\ast,r,s}:=H^2_{\ast, {\bf P}_{r}}\cap X, \ \ C_{\ast,r,s}:=C_{\ast, {\bf P}_{r}}\cap X,\end{equation}
and proceeding as in Sects. \ref{section linearized} and \ref{Section finite reduction}, we find  for $\epsilon$ small enough
a unique solution $(\psi_{{\bf P}_r}, c_{ij}({\bf P}_r))\in H^2_{\ast, r, s}\times \mathbb R^{N\ell}$ to problem \eqref{nsy} (see Lemma \ref{fdr}).
\\

Following \cite{DelPinoFelmerMusso} it is possible to show that, restricting to work on $X$ and with the symmetric choice of the point ${\bf P_r}$ that we have done, the unknowns $(c_{i, j})_{\substack{i=1,\dots, \ell\\j=1,\dots,N}}\in  \mathbb R$ in problem \eqref{nsy} reduces  only to one unknown $c_{1,1}$, precisely we prove the following result
\begin{lemma}\label{LemmaAusiliariaSimmetrico1}
Fix $\tau=\beta^4(1+\sigma)$. Provided $\epsilon>0$ is sufficiently small, for every $r\in R_{\epsilon}$ there is a unique pair $(\psi_{\bf P_r}, c_{1, 1}({\bf P_r}))\in H^2_{*,r,s}\times \mathbb R$ which solve 
\begin{equation} \label{ausiliariaRidottaPersimmetria}
\left\{
\begin{array}{lr}
S_{\epsilon}[w_{\bf P_r}+\psi]=c_{1, 1}\sum_{i=1,\cdots, k}\left[ \cos\left({\frac{2\pi}{k}(i-1)}\right) Z_{P_{i}, 1}+\sin\left({\frac{2\pi}{k}(i-1)}\right) Z_{P_{i}, 2}\right],\\
\psi\in H^2_{*, r,s},\,\, \langle \psi, Z_{{P}_{i}, j}\rangle=0,\,\,\ i=1, \ldots, \ell,\,\, j=1, \ldots, N.
\end{array}
\right..
\end{equation}
Moreover
\[
\|\psi_{\bf P_r}\|_{*,{\bf P}_r}<\epsilon^{\tau}; \quad (\psi_{\bf P_r}, \psi_{\bf P_r})_{\epsilon}\leq \epsilon^{N+2\tau}
\]
the map ${r}\in \bar{R}_{\epsilon}\longmapsto \psi_{\bf P_r}\in H^1(\mathbb R^N)$ is $C^1$ and the map ${r}\longmapsto c_{1, 1}(\bf P_r)\in \mathbb R$ is continuous.
\end{lemma}
\begin{proof}
We postpone the proof of this result to the Appendix.
\end{proof}
$\;$\\
Let us set, for $\epsilon>0$ sufficiently small, the one-variable function $\widetilde M_{\epsilon}:\bar{R_{\epsilon}}\rightarrow \mathbb R$
\[\widetilde M_{\epsilon}[r]:= M_{\epsilon}[{\bf P}_r],\]
where $M_{\epsilon}$ is the reduced functional defined in \eqref{funzioRidot}.
\\
\\
To conclude the proof it is sufficient to find for $\epsilon$ small, a critical point $r$ of the  function $\widetilde M_{\epsilon}$. Indeed  the following holds
\begin{lemma} \label{lemmarelazione punti critici ridotto e funzionale} Let ${\bar r}\in R_{\epsilon}$ be an interior maximum point for $\widetilde M_{\epsilon}.$ Then, for $\epsilon>0$ sufficiently small, the corresponding function $v_{\epsilon}:=w_{{\bf P}_{\bar r}}+\psi_{{\bf P}_{\bar r}}$ is a critical point of $J_{\epsilon}$, namely a solution to \eqref{eqScalare}.
\end{lemma}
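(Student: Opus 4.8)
The plan is to invoke the standard correspondence between critical points of the reduced functional $M_{\epsilon}$ and critical points of $J_{\epsilon}$ restricted to the symmetric subspace $X$. This is the final and decisive step of the Lyapunov-Schmidt scheme: the full equation $S_{\epsilon}[w_{{\bf P}_r}+\psi_{{\bf P}_r}]=\sum_{i,j}c_{ij}({\bf P}_r)Z_{P_i,j}$ holds by Lemma \ref{fdr}, so it suffices to show that at an interior maximum point ${\bf P}_r$ of $M_{\epsilon}$ all the Lagrange multipliers $c_{ij}({\bf P}_r)$ vanish.

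First I would differentiate $M_{\epsilon}[{\bf P}]=\epsilon^{-N}J_{\epsilon}[w_{\bf P}+\psi_{\bf P}]-\ell I[w]-\epsilon^2 C_1$ with respect to the components of ${\bf P}$, using the $C^1$-dependence of ${\bf P}\mapsto \psi_{\bf P}$ established in the previous lemma. Since $\langle \psi_{\bf P}, Z_{P_i,j}\rangle =0$ and $J_{\epsilon}'[w_{\bf P}+\psi_{\bf P}]=\sum_{m,n}c_{mn}({\bf P})Z_{P_m,n}$ (identifying $S_{\epsilon}$ with $-J_{\epsilon}'$ up to sign and the Riesz isomorphism), the chain rule gives, schematically,
\begin{equation*}
\partial_{(P_i)_j}\big(\epsilon^{-N}J_{\epsilon}[w_{\bf P}+\psi_{\bf P}]\big)=\epsilon^{-N}\sum_{m,n}c_{mn}({\bf P})\Big\langle Z_{P_m,n},\ \partial_{(P_i)_j}(w_{\bf P}+\psi_{\bf P})\Big\rangle .
\end{equation*}
At an interior maximum ${\bf P}_r$ the left-hand side vanishes for every admissible variation; moreover, because we are working in the symmetric space $X$, the relevant variations of ${\bf P}_r$ are those preserving the polygonal symmetry, i.e. variations of the single scalar parameter $r$ (the constraint ${\bf P}_r\in\Gamma_{\epsilon}$ being open so that $r_\epsilon$ is genuinely interior). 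The key algebraic fact is that the matrix $\big(\langle Z_{P_m,n}, \partial_{(P_i)_j}(w_{\bf P}+\psi_{\bf P})\rangle\big)$ is, by \eqref{prodotto} together with the smallness estimate $\|\psi_{\bf P}\|_\infty<\epsilon^\tau$ from Lemma \ref{fdr}, diagonally dominant of size $\epsilon^{N-2}\|\partial w/\partial x_1\|^2(1+o(1))$, hence invertible for $\epsilon$ small; combined with the symmetry (which forces $c_{ij}({\bf P}_r)$ to have the same polygonal symmetry, cutting the system down to a compatible number of equations) one deduces $c_{ij}({\bf P}_r)=0$ for all $i,j$.

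Consequently $S_{\epsilon}[w_{{\bf P}_r}+\psi_{{\bf P}_r}]=0$, i.e. $v_\epsilon:=w_{{\bf P}_r}+\psi_{{\bf P}_r}$ solves \eqref{eqScalare}, and since $v_\epsilon\in X$ it is in particular a critical point of $J_{\epsilon}|_X$ (indeed of $J_\epsilon$ on all of $H^1$, by the principle of symmetric criticality, as $J_\epsilon$ is invariant under the group action). The main obstacle is the bookkeeping needed to check that the linear system for the $c_{ij}$ is exactly determined in the symmetric setting — one must verify that the symmetry reduces both the number of free parameters in ${\bf P}_r$ and the number of independent multipliers $c_{ij}$ in a matched way, so that diagonal dominance of the reduced coefficient matrix still yields $c_{ij}=0$ rather than an overdetermined or underdetermined situation. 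Since all of these ingredients (the $C^1$ map ${\bf P}\mapsto\psi_{\bf P}$, estimate \eqref{prodotto}, Lemma \ref{fdr}, invariance of $J_\epsilon$) are already in place, and the argument is the one carried out in \cite{DW}, I would present it concisely by reference rather than reproducing the routine computation.
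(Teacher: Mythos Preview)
Your proposal is correct and matches the paper's treatment: the paper gives no proof of this lemma at all, simply stating ``one can prove the following (see for instance \cite{DW})'', and you likewise outline the standard Lyapunov--Schmidt closing argument (differentiate $M_\epsilon$, use diagonal dominance from \eqref{prodotto} and the smallness of $\psi_{\bf P}$, exploit the symmetry to match the number of independent $c_{ij}$ with the number of free parameters) and then propose to cite \cite{DW}. One minor remark: once you have $c_{ij}=0$ and hence $S_\epsilon[v_\epsilon]=0$, the function $v_\epsilon$ is already a critical point of $J_\epsilon$ on all of $H^1$, so invoking symmetric criticality is unnecessary at that point; in the paper's logical flow the principle of symmetric criticality is applied \emph{after} this lemma, to pass from criticality in $X$ to criticality in $H^1$, but your direct route makes that step redundant.
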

\begin{proof}
The proof consists in showing that if ${\bar r}\in R_{\epsilon}$ is an interior maximum point for $\widetilde M_{\epsilon},$ then
%
%
%
%
%
%
%
\begin{equation}
\label{c11=zero} 
c_{1,1}({\bf P}_{\bar r})=0.\end{equation}
Indeed it is clear from \eqref{ausiliariaRidottaPersimmetria} that for  a point $\bf P$ solving \eqref{c11=zero}, the corresponding function $v=w_{\bf P}+\psi_{\bf P}$ is a critical point of $J_{\epsilon}$ on $X$.
\\
Hence, if we denote by $G$ the group of the rotation matrix in $\mathbb R^{N-2},$ and
for every $i\in \mathbb N$ and $g\in G$ we define \[T_{i,g}:\mathbb R^N\rightarrow\mathbb R, \quad T_{i,g}(x)=T_{i,g}(z,x'):=(ze^{2\pi i\sqrt{-1}/k},gx'),\]
\[\widetilde T_{2,g}:\mathbb R^N\rightarrow\mathbb R, \quad \widetilde T_{2,g}(x)=\widetilde T_{2,g}(x_1,x_2,x'):=(x_1,-x_2,gx'),\]
then, by Lemma \ref{invact} the functional $J_{\epsilon}$ is invariant under the action of the group $\{T_{i,g}, \widetilde T_{2,g}:i\in\mathbb N, g\in G\}.$  Moreover $X=\{u\in H^1(\mathbb R^N): u(T_{i,g}(x))=u(x), u(\widetilde T_{2,g}(x))=u(x)\}.$ So  the  principle of symmetric criticability ensures that $v$ is also a critical point of $J_{\epsilon}$ and, consequently, a solution of \eqref{eqScalare}.
\\
\\
In the following we show that  \eqref{c11=zero} holds.
Since $\bar r\in R_{\epsilon}$ is an interior maximum point for $\widetilde M_{\epsilon}$, then in particular
\begin{equation}
\label{derinulla}
\frac{\partial}{\partial r}\widetilde M_{\epsilon}(r)\big|_{r=\bar r}=0.
\end{equation}
Using the $C^1$ regularity of the map $r\mapsto\psi_{{\bf P}_r}$, \eqref{derinulla} may be rewritten as
\[
\int_{\mathbb R^N} S_{\epsilon}[w_{{\bf P}_{\bar r}}+\psi_{{\bf P}_{\bar r}}]\frac{\partial}{\partial r}\left( w_{{\bf P}_r}+\psi_{{\bf P}_r}\right)\big|_{r=\bar r} \ dx =0,
\]
which is equivalent by  \eqref{ausiliariaRidottaPersimmetria} to
\begin{equation}
\label{dernullacontinuo}
c_{1, 1}\int_{\mathbb R^N} \sum_{i=1}^k\left[ \cos\left({\frac{2\pi}{k}(i-1)}\right) Z_{P_{i}, 1}+\sin\left({\frac{2\pi}{k}(i-1)}\right) Z_{P_{i}, 2}\right]
\frac{\partial}{\partial r}\left( w_{{\bf P}_r}+\psi_{{\bf P}_r}\right)\big|_{r=\bar r} \ dx =0.
\end{equation}
Now
\begin{eqnarray}\label{eqderivataw}
\frac{\partial}{\partial r} w_{{\bf P}_r} &=& \frac{\partial}{\partial r}\left(\sum_{i=1}^{k+1} w_{p_i}\right) = -\sum_{i=1}^{k}\left({Q_{i}}_1\frac{\partial w_{P_i}}{\partial x_1}+{Q_{i}}_2\frac{\partial w_{P_i}}{\partial x_2} \right)\nonumber\\
&= &-\sum_{i=1}^{k}\left[\left(\cos \frac{2\pi}{k}(i-1)\right)\frac{\partial w_{P_i}}{\partial x_1}+\left(\sin \frac{2\pi}{k}(i-1)\right)\frac{\partial w_{P_i}}{\partial x_2} \right]
\end{eqnarray}
Moreover, since $\langle \psi_{\bf P_r}, Z_{{P}_{i}, j}\rangle=0$, then
\begin{equation}
\label{equainterm}
\int_{\mathbb R^N}Z_{P_i,j}\frac{\partial \psi_{\bf P_r}}{\partial r} dx= 
-\int_{\mathbb R^N}\frac{\partial Z_{P_i,j}}{\partial r}\psi_{\bf P_r} =
-\left( {Q_{i}}_1\int_{\mathbb R^N}\frac{\partial Z_{P_i,j}}{\partial x_1}\psi_{\bf P_r}dx\  +\ {Q_{i}}_2 \int_{\mathbb R^N} \frac{\partial Z_{P_i,j}}{\partial x_2}\psi_{\bf P_r}dx  \right),
\end{equation} 
and for $m=1,2$, by H\"older inequality
\[
\int_{\mathbb R^N}
\frac{\partial Z_{P_i,j}}{\partial x_m}\psi_{\bf P_r}dx=
\int_{\mathbb R^N}\left(
\epsilon^2 \nabla\left(\frac{\partial^2 w_{P_i}}{\partial x_m\partial x_j}\right)\nabla\psi_{\bf P_r}+
\frac{\partial^2 w_{P_i}}{\partial x_m\partial x_j}\psi_{\bf P_r}\right) dx\leq \left\| \frac{\partial^2 w_{P_i}}{\partial x_m\partial x_j}\right\|\|\psi_{\bf P_r}  \|. 
\]
By a change of variable it is easy to see that
\[\left\| \frac{\partial^2 w_{P_i}}{\partial x_m\partial x_j}\right\|=\int_{\mathbb{R}^N}\left( \epsilon^2\left|\nabla \frac{\partial^2 w_{P_i}}{\partial x_m\partial x_j}\right |^2+\left(\frac{\partial^2 w_{P_i}}{\partial x_m\partial x_j}\right)^2\right) dx= \epsilon^{N-4} \left\| \frac{\partial^2 w}{\partial x_m\partial x_j}\right\|_{H^1(\mathbb R^N)} \]
ad also by Lemma \ref{LemmaAusiliariaSimmetrico1}
\[\|\psi_{\bf P_r}  \|\leq \epsilon^{\frac{N+2\tau}{2}}\]
hence substituting into \eqref{equainterm} we get
\begin{equation}\label{eqderivPsi}
\int_{\mathbb R^N}Z_{P_i,j}\frac{\partial \psi_{\bf P_r}}{\partial r} dx
=O(\epsilon^{\frac{3N}{2}+\tau-4})\end{equation}
Hence substituting \eqref{eqderivataw} and \eqref{eqderivPsi} into \eqref{dernullacontinuo},  and also using \eqref{prodotto}, we get
\[c_{1,1}\left[k\left\|\frac{\partial w}{\partial x_1}\right\|^2 + o(1)+ O(\epsilon^{\frac{N}{2}+\tau-2})\right]=0\]
from which $c_{1,1}=0$.
\end{proof}

The remaining part of the section is then devoted to find an interior maximum point of the reduced functional $\widetilde M_{\epsilon}$.

Let us observe that, thanks to Proposition \ref{ridotto generale} and to the assumption $k\geq 7,$ it reduces to the following
\begin{proposition}
For $\epsilon>0$ sufficiently small
\begin{equation}\label{ridottoSimmetricoPoligono}
\widetilde M_{\epsilon}[r]= (2k+o(1)) \alpha_{\epsilon, C_k}\left(\frac{r}{\epsilon}\beta_k\right)+ O(\epsilon^{2\tau})
\end{equation}
uniformly for  $r>0$ such that $r\in \bar R_{\epsilon},$ where $\beta_k:=2\sin{\frac{\pi}{k}},$ ${C_k}$ is a positive constant and $\alpha_{\epsilon, C_k}$ is the function defined in Lemma \ref{Lemma Massimo Ridotto}.
\end{proposition}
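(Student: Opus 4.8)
The plan is to insert the symmetric ansatz into the general expansion of $M_\epsilon$ provided by Proposition \ref{ridotto generale} and to track the contributions of the three types of interacting pairs. Write ${\bf P}_r=(0,P_1,\dots,P_k)$ with $P_i=rQ_i$, so $\ell=k+1$, $\lambda_0=+1$ and $\lambda_i=-1$ for $i=1,\dots,k$. The pairs $(i,j)$ with $i\ne j$ split into three families: the $2k$ pairs $\{0,i\}$ (opposite signs, $\lambda_0\lambda_i=-1$), the pairs $\{i,j\}$ with $1\le i\ne j\le k$ (same sign, $\lambda_i\lambda_j=+1$), and each family must be fed into both the exponential term $-\gamma_0\sum_{i\ne j}\lambda_i\lambda_j w\!\left(\frac{P_i-P_j}{\epsilon}\right)$ and the Poisson term $\epsilon^2 C_2\sum_{i\ne j}\frac1{|(P_i-P_j)/\epsilon|^{N-2}}$. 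The key geometric facts are $|P_i-0|=r$ for all $i$ and, for the regular $k$-gon of radius $r$, $|P_i-P_j|=r\,2\sin\frac{\pi|i-j|}{k}$; in particular the minimal mutual distance among the outer peaks is attained by nearest neighbours and equals $r\beta_k$ with $\beta_k=2\sin\frac\pi k$.

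Next I would show the dominant balance is between the $2k$ opposite-sign pairs $\{0,i\}$. For those pairs the exponential term contributes $+2k\,\gamma_0\, w(r/\epsilon)$, which is \emph{attractive} and would push $r\to 0$, while the Poisson term contributes $+2k\,\epsilon^2 C_2\,(r/\epsilon)^{-(N-2)}$, which grows as $r$ shrinks; these two together give precisely $2k\,\alpha_{\epsilon,C_2}(r/\epsilon)$ after matching constants, but with the \emph{wrong sign} on the $w$-term relative to $\alpha_{\epsilon,C}$. The resolution is that the same-sign outer pairs $\{i,j\}$ produce a term $-\gamma_0\sum_{i\ne j}w\!\left(\frac{P_i-P_j}{\epsilon}\right)$ which is \emph{repulsive}; thanks to the exponential decay \eqref{dec}, this sum is dominated by the $2k$ nearest-neighbour pairs at distance $r\beta_k$, giving $-\gamma_0\cdot 2k\,(1+o(1))\,w(r\beta_k/\epsilon)$, and since $\beta_k<1$ exactly when $k\ge 7$ (as $2\sin\frac\pi7<1\le 2\sin\frac\pi6$), one has $w(r\beta_k/\epsilon)\gg w(r/\epsilon)$, so this repulsive outer interaction \emph{dominates} the attractive central one. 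Factoring out the variable $\rho=\frac r\epsilon\beta_k$, the leading part of $M_\epsilon$ becomes $-\gamma_0\cdot 2k\,(1+o(1))\,w(\rho)+C_k\epsilon^2(1+o(1))\rho^{-(N-2)}=2k\,(1+o(1))\,\alpha_{\epsilon,C_k}(\rho)$ with an appropriate $C_k>0$ collecting the Poisson constant and the Jacobian $\beta_k^{N-2}$; all remaining pair-contributions (the central Poisson pairs, the non-nearest outer pairs) are lower order and are absorbed either into the $o(1)$ factors or into the error term, which stays $O(\epsilon^{2\tau})$ by Proposition \ref{ridotto generale}.

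The main technical obstacle is the bookkeeping that certifies these heuristic dominances rigorously and \emph{uniformly} for ${\bf P}_r\in\bar\Gamma_\epsilon$: one must check that on the admissible range $\beta^2\epsilon\log\frac1{\epsilon^2}<r\beta_k<\epsilon(\log\frac1{\epsilon^2})^2$ (equivalently $\rho$ in a window around $\log\frac1{\epsilon^2}$) the ratio $w(r/\epsilon)/w(r\beta_k/\epsilon)$ is indeed $o(1)$, using the precise asymptotics \eqref{dec} — here it is essential that $\beta_k<1$ strictly, and one exploits $w(r/\epsilon)/w(r\beta_k/\epsilon)\sim(\beta_k)^{(N-1)/2}e^{-(1-\beta_k)r/\epsilon}$ with $(1-\beta_k)r/\epsilon\gtrsim(1-\beta_k)\beta^2\log\frac1{\epsilon^2}\to\infty$. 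One also checks that the central Poisson terms $\epsilon^2(r/\epsilon)^{-(N-2)}$ are of the same size as the outer Poisson terms $\epsilon^2(r\beta_k/\epsilon)^{-(N-2)}$ up to the constant $\beta_k^{N-2}$, so they merely adjust $C_k$; and that the sum over non-nearest-neighbour outer pairs, being $\sum_{m\ge 2}(2k)\,w(r\,2\sin\frac{\pi m}{k}/\epsilon)$, is exponentially smaller than the nearest-neighbour contribution since $2\sin\frac{\pi m}{k}>\beta_k$ for $2\le m\le k/2$. Collecting these estimates and invoking Proposition \ref{ridotto generale} for the form and uniformity of the error yields \eqref{ridottoSimmetricoPoligono}.
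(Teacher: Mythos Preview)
Your proposal is correct and follows essentially the same approach as the paper: plug the symmetric configuration into Proposition~\ref{ridotto generale}, separate the center-vertex pairs from the outer vertex-vertex pairs, use $\beta_k=2\sin\frac{\pi}{k}<1$ (exactly when $k\ge 7$) together with the decay \eqref{dec} to show the nearest-neighbour outer interactions dominate both the center-vertex $w$-terms and the non-nearest outer $w$-terms, and absorb all Poisson contributions into a single constant $C_k$ after rescaling to $\rho=\frac{r}{\epsilon}\beta_k$. The paper's write-up is more direct (it goes straight to the computation without the ``wrong sign / resolution'' narrative), but the mathematical content is the same.
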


\begin{proof}
For $r\in \bar R_{\epsilon}$ the point ${\bf P}_r\in\bar \Gamma_{\epsilon}$, and the reduced functional becomes
\begin{eqnarray*}
\widetilde M_{\epsilon}[r]&=& - (\gamma_0+o(1))\left( -2k w\left(\frac{r}{\epsilon}\right)+
\sum_{i\neq j}w\left(\frac{r}{\epsilon}|Q_i-Q_j|\right)\right)\\ && +\epsilon^{2}(C_2+o(1))\frac{1}{\left|\frac{r}{\epsilon}\right|^{N-2}}\left(2k+\sum_{i\neq j}\frac{1}{|Q_i-Q_j|^{N-2}}\right)
+ O(\epsilon^{2\tau})\\
&=& - k(\gamma_0+o(1))\left( -2 w\left(\frac{r}{\epsilon}\right)+
\sum_{i=2}^kw\left(\frac{r}{\epsilon}|Q_1-Q_i|\right)\right)\\
&&+\epsilon^{2}k(C_2+o(1))\frac{1}{\left|\frac{r}{\epsilon}\right|^{N-2}}\left(2+\sum_{i=2}^k\frac{1}{|Q_1-Q_i|^{N-2}}\right)
+ O(\epsilon^{2\tau})\\
& =&
 - k(\gamma_0+o(1))\left( -2 w\left(\frac{r}{\epsilon}\right)+ 2 w(\frac{r}{\epsilon}\beta_k)+
\sum_{i=3}^{k-1}w\left(\frac{r}{\epsilon}\beta_k^i\right)\right)\\
&&+\epsilon^{2}k(C_2+o(1))\frac{1}{\left|\frac{r}{\epsilon}\right|^{N-2}}\left(2+2\frac{1}{\beta_k^{N-2}}+\sum_{i=3}^{k-1}\frac{1}{(\beta_k^i)^{N-2}}\right)
+ O(\epsilon^{2\tau}).
\end{eqnarray*}
where we set
\[\beta_k:=|Q_2-Q_{1}|=|Q_k-Q_{1}|=2\sin{\frac{\pi}{k}}\]
and
\[\beta_k^i:=|Q_i-Q_{1}|=\sqrt{2}\sqrt{1-\cos{\frac{2\pi(i-1)}{k}}},\ i=3,...,k-1.\]
Observe that by our choice
\[\beta_k<\beta_k^i,\ i=3,\ldots,k-1\]
hence, from \eqref{dec}, it follows that
\[w\left(\frac{r}{\epsilon}\beta_k^i\right)=o\left(w\left(\frac{r}{\epsilon}\beta_k\right)\right),\ i=3,\ldots,k-1,\ \ \mbox{ as }\frac{r}{\epsilon}\rightarrow +\infty.\]
Moreover $\beta_k<1$ because $k\geq 7,$ hence we also have
\[w\left(\frac{r}{\epsilon}\right)=o\left(w\left(\frac{r}{\epsilon}\beta_k\right)\right),\ \ \mbox{ as }\frac{r}{\epsilon}\rightarrow +\infty.\]
As a consequence the reduced functional becomes
\[
\widetilde M_{\epsilon}[r]=-(2k \gamma_0+o(1))w\left(\frac{r}{\epsilon}\beta_k \right)+\epsilon^2 \left(2kC_k+o(1)\right)     \frac{1}{\left|\frac{r}{\epsilon}\beta_k\right|^{N-2}} + O(\epsilon^{2\tau}),
\]
where ${C_k}:=C_2\left(1+\beta_{k}^{N-2}+\frac{1}{2}\sum_{i=3}^{k-1}\frac{\beta_{k}^{N-2}}{(\beta_k^i)^{N-2}}\right).$
\end{proof}

Finally next result gives an interior maximum point $r$ for $\widetilde M_{\epsilon}$
\begin{proposition}\label{proofMax} For $\epsilon >0$ sufficiently small, the following maximization problem
$$\max\{\widetilde M_{\epsilon}[r]\ : \ r \in \bar R_{\epsilon}\}$$
has a solution ${r}_{\epsilon}\in R_{\epsilon}.$
Furthermore $$\lim_{\epsilon\rightarrow 0}\frac{r_{\epsilon}\beta_k}{\epsilon\log{\frac{1}{\epsilon^2}}}=1.$$
\end{proposition}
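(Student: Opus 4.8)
The plan is to reduce everything to the one–variable analysis already packaged in Lemma \ref{Lemma Massimo Ridotto}. By the previous Proposition we have, uniformly for $r$ with ${\bf P}_r\in\bar\Gamma_\epsilon$,
\[
M_\epsilon[r]=(2k+o(1))\,\alpha_{\epsilon,C_k}\!\left(\tfrac{r}{\epsilon}\beta_k\right)+O(\epsilon^{2\tau}),
\]
so the maximization of $M_\epsilon$ over $\bar R_\epsilon$ is, up to the error $O(\epsilon^{2\tau})$ and the positive factor $2k$, the same as maximizing $\alpha_{\epsilon,C_k}(\rho)$ over the interval of $\rho=\tfrac{r}{\epsilon}\beta_k$ corresponding to $r\in\bar R_\epsilon$. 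First I would spell out this range: for $r\in R_\epsilon$ one has $\rho=\tfrac{r}{\epsilon}\beta_k\in\bigl(\beta_k^2\log\tfrac1{\epsilon^2},\ \tfrac{\beta_k}{2}(\log\tfrac1{\epsilon^2})^2\bigr)$, an interval which for $\epsilon$ small contains the point $\rho_\epsilon=\log\tfrac1{\epsilon^2}+\tfrac{N-1}{2}\log\log\tfrac1{\epsilon^2}+o(\log\log\tfrac1{\epsilon^2})$ given by Lemma \ref{Lemma Massimo Ridotto} (since $\beta_k<1\le$, the left endpoint is below $\log\tfrac1{\epsilon^2}$, and the right endpoint, of order $(\log\tfrac1{\epsilon^2})^2$, is well above $\rho_\epsilon$).

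Next I would use the existence of a maximizer: $\bar R_\epsilon$ is compact and $M_\epsilon$ is continuous (the map ${\bf P}\mapsto\psi_{\bf P}$ is continuous by Lemma \ref{fdr}, hence $r\mapsto M_\epsilon[r]$ is), so $\max\{M_\epsilon[r]:r\in\bar R_\epsilon\}$ is attained at some $r_\epsilon$. The real content is to show $r_\epsilon\in R_\epsilon$, i.e. the maximum is interior and not on the boundary $\partial R_\epsilon$. For this I would compare values: at the interior candidate $\tilde r_\epsilon:=\tfrac{\epsilon}{\beta_k}\rho_\epsilon$ we get, from Lemma \ref{Lemma Massimo Ridotto}, $M_\epsilon[\tilde r_\epsilon]\ge (2k+o(1))C_k\epsilon^2(\log\tfrac1{\epsilon^2})^{-(N-2)}+O(\epsilon^{2\tau})$, and since by the assumptions of Lemma \ref{fdr} $\tau=\beta^4(1+\sigma)$ with $\beta$ close to $1$ we may assume $2\tau>2$, so the first term dominates and $M_\epsilon[\tilde r_\epsilon]\ge c\,\epsilon^2(\log\tfrac1{\epsilon^2})^{-(N-2)}$ for some $c>0$. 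On the other hand, at either endpoint $r=\beta_k\epsilon\log\tfrac1{\epsilon^2}$ or $r=\tfrac12\epsilon(\log\tfrac1{\epsilon^2})^2$, the corresponding $\rho$ is at an endpoint of the admissible $\rho$–interval, and since $\alpha_{\epsilon,C_k}$ has a \emph{unique} maximum $\rho_\epsilon$ strictly inside that interval and is monotone on each side of it, $\alpha_{\epsilon,C_k}$ at those endpoints is strictly smaller than $\alpha_{\epsilon,C_k}(\rho_\epsilon)$; a quantitative version (using the explicit asymptotics: the $w(\rho)$ term forces a large penalty near the left end, the $\epsilon^2\rho^{-(N-2)}$ term is comparatively tiny near the right end) gives $M_\epsilon[r]\le M_\epsilon[\tilde r_\epsilon]-(\text{something})<M_\epsilon[\tilde r_\epsilon]$ on $\partial R_\epsilon$ for $\epsilon$ small. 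Hence the maximum cannot be attained on the boundary, so $r_\epsilon\in R_\epsilon$.

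Finally, for the asymptotic statement $\lim_{\epsilon\to0}\tfrac{r_\epsilon\beta_k}{\epsilon\log\frac1{\epsilon^2}}=1$: since $r_\epsilon$ is an interior maximizer and $M_\epsilon[r]=(2k+o(1))\alpha_{\epsilon,C_k}(\tfrac{r}{\epsilon}\beta_k)+O(\epsilon^{2\tau})$ with $\epsilon^{2\tau}=o(\epsilon^2(\log\tfrac1{\epsilon^2})^{-(N-2)})$, the value $M_\epsilon[r_\epsilon]$ must agree with $\max_\rho\alpha_{\epsilon,C_k}$ up to lower order; by the uniqueness and the shape of $\alpha_{\epsilon,C_k}$ (its maximum is nondegenerate enough that near–maximal values force $\rho$ near $\rho_\epsilon$) one deduces $\tfrac{r_\epsilon}{\epsilon}\beta_k=\rho_\epsilon(1+o(1))$, and then $\rho_\epsilon=\log\tfrac1{\epsilon^2}+\tfrac{N-1}{2}\log\log\tfrac1{\epsilon^2}+o(\log\log\tfrac1{\epsilon^2})\sim\log\tfrac1{\epsilon^2}$ yields the claim. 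The main obstacle I anticipate is making the boundary comparison fully rigorous, i.e. turning "$\alpha_{\epsilon,C_k}$ is strictly smaller on $\partial R_\epsilon$'' into a quantitative gap that survives the additive $O(\epsilon^{2\tau})$ error and the multiplicative $(2k+o(1))$; this requires using the precise exponential decay \eqref{dec} of $w$ at the left endpoint and the polynomial smallness of $\epsilon^2\rho^{-(N-2)}$ at the right endpoint, together with the choice $\beta$ (hence $\tau$) close to $1$ so that $2\tau>2$.
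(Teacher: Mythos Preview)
Your proposal follows essentially the same route as the paper: reduce to the one-variable function $\alpha_{\epsilon,C_k}$ via the expansion \eqref{ridottoSimmetricoPoligono}, use continuity to get a maximizer on $\bar R_\epsilon$, evaluate at the test point $\tilde r_\epsilon=\epsilon\rho_\epsilon/\beta_k$ to obtain the lower bound $M_\epsilon[r_\epsilon]\ge \epsilon^2(2kC_k+o(1))(\log\tfrac1{\epsilon^2})^{-(N-2)}$, and then compare with other values of $r$. The paper organizes the last step a little differently and more concretely than you do: rather than first checking the two boundary points and afterwards invoking a vague ``nondegeneracy'' of $\alpha_{\epsilon,C_k}$ to pin down the limit, it directly argues by contradiction that $\tfrac{r_\epsilon\beta_k}{\epsilon\log(1/\epsilon^2)}\to 1$, splitting into the two cases $>1+c$ (where one bounds $M_\epsilon[r_\epsilon]$ above by the Poisson term alone, getting a factor $(1+c)^{-(N-2)}<1$ that contradicts the lower bound) and $<1-c$ (where one uses \eqref{dec} to show the negative $-\gamma_0 w$ term dominates, forcing $M_\epsilon[r_\epsilon]$ below the lower bound). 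Interiority then follows automatically from the limit. This is precisely the ``quantitative gap'' you flag as the main obstacle; your sketch using the exponential decay at the left endpoint and the polynomial smallness at the right endpoint is exactly what the paper makes rigorous, so no new idea is needed---just carry out those two estimates explicitly (as the paper does) instead of appealing to an unproved shape property of $\alpha_{\epsilon,C_k}$.
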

\begin{proof}
Since $\widetilde M_{\epsilon}$ is continuous in $r,$ there exists $r_{\epsilon}\in \bar{R_{\epsilon}}$ such that
\[\widetilde M_{\epsilon}[r_{\epsilon}]=\max_{r\in \bar{R}_{\epsilon}}\widetilde M_{\epsilon}[r].\]
We claim that $r_{\epsilon}\in R_{\epsilon}.$ We prove this by energy comparison.
We first obtain a lower bound for $\widetilde M_{\epsilon}[r_{\epsilon}].$
Let us choose $s_{\epsilon}:=\frac{\epsilon \rho_{\epsilon}}{\beta_k}, $ where $\rho_{\epsilon}>0$ is given in Lemma \ref{Lemma Massimo Ridotto}. It is easy to see that $s_{\epsilon}$ belongs to $R_{\epsilon}$. Indeed, by Lemma, \ref{Lemma Massimo Ridotto} $\rho_{\epsilon}>\beta^2\log{\frac{1}{\epsilon^2}}$ and, for $\epsilon$ small,  $\rho_{\epsilon}<\frac{1}{2}\beta_k\left( \log{\frac{1}{\epsilon^2}}\right)^2.$
Then by using again Lemma \ref{Lemma Massimo Ridotto} and \eqref{ridottoSimmetricoPoligono}
\begin{equation}\label{lower bound1}
\widetilde M_{\epsilon}[r_{\epsilon}]\geq \widetilde M_{\epsilon}[s_{\epsilon}] =
(2k+o(1)) \alpha_{\epsilon, C_k}(\rho_{\epsilon})+ O(\epsilon^{2\tau})= \epsilon^2(2kC_k+o(1))  \frac{1}{\left(\log{\frac{1}{\epsilon^2}}\right)^{N-2}}.
\end{equation}
We are going to prove that $\frac{r_{\epsilon}\beta_k}{\epsilon\log{\frac{1}{\epsilon^2}}}\rightarrow 1$ as $\epsilon\rightarrow 0.$
By contradiction assume that there exists a sequence $\epsilon_{n}\rightarrow 0$ such that $\frac{r_{\epsilon_{n}}\beta_k}{\epsilon_{n}\log{\frac{1}{\epsilon_{n}^2}}}>1+c.$ Using once more \eqref{ridottoSimmetricoPoligono}
\begin{equation*}
\widetilde M_{\epsilon_n}[r_{\epsilon_n}]\leq \epsilon_{n}^2(2kC_k+o(1)) \frac{1}{\left|\frac{r_{\epsilon_{n}}}{\epsilon_{n}}\beta_k\right|^{N-2}} +O(\epsilon_{n}^{2\tau})\leq
\epsilon_{n}^2(2kC_k+o(1)) \frac{1}{(1+c)^{N-2}} \frac{1}{\left|\log{\frac{1}{\epsilon_{n}^2}}\right|^{N-2}}
\end{equation*}
which contradicts the \eqref{lower bound1}.
Now assume the existence of a sequence  $\epsilon_{n}\rightarrow 0$ such that $\frac{r_{\epsilon}\beta_k}{\epsilon\log{\frac{1}{\epsilon^2}}}<1-c.$ Then by the decay of $w$ (observe that the function $x\mapsto \frac{e^x}{x^{\frac{N-3}{2}}}$ is nondecreasing for $x$ large)
\begin{eqnarray*}\widetilde M_{\epsilon_{n}}[r_{\epsilon_n}]&=& (2k+o(1))\left(-\gamma_0w\left(\frac{r_{\epsilon_n}}{\epsilon_n}\beta_k\right)+C_k\epsilon_n^2\frac{1}{\left|\frac{r_{\epsilon_n}}{\epsilon_n}\beta_k\right|^{N-2}}  \right)+ O(\epsilon_n^{2\tau})
\\
&&= (2k+o(1))\left( -\gamma_0A_N\frac{e^{-\left(\frac{r_{\epsilon_n}}{\epsilon_n}\beta_k\right)}}{\left| \frac{r_{\epsilon_n}}{\epsilon_n}\beta_k \right|^{\frac{N-1}{2}}} +C_k\epsilon_n^2\frac{1}{\left|\frac{r_{\epsilon_n}}{\epsilon_n}\beta_k\right|^{N-2}} \right) + O(\epsilon_n^{2\tau})\\
&&\leq (2k+o(1))\left(  -\gamma_0A_N\frac{e^{-\left(\frac{r_{\epsilon_n}}{\epsilon_n}\beta_k\right)}}{\left| \frac{r_{\epsilon_n}}{\epsilon_n}\beta_k \right|^{N-2}} +C_k\epsilon_n^2\frac{1}{\left|\frac{r_{\epsilon_n}}{\epsilon_n}\beta_k\right|^{N-2}} \right) + O(\epsilon_n^{2\tau})\\
&&=\epsilon_n^2 (2kC_k+o(1))
\left(  -\frac{\gamma_0A_N}{\epsilon_n^2C_k}\frac{e^{-\left(\frac{r_{\epsilon_n}}{\epsilon_n}\beta_k\right)}}{\left| \frac{r_{\epsilon_n}}{\epsilon_n}\beta_k \right|^{N-2}} +\frac{1}{\left|\frac{r_{\epsilon_n}}{\epsilon_n}\beta_k\right|^{N-2}} \right) + O(\epsilon_n^{2\tau})\\
&&\leq \epsilon_n^2 (2kC_k+o(1))\frac{1}{\left|\log{\frac{1}{\epsilon_n^2}}\right|^{N-2}}
\left(  -\frac{\gamma_0A_N}{C_k}\frac{\epsilon_n^{-2c}}{\left| (1-c) \right|^{N-2}} +\frac{1}{\left|\beta^2\right|^{N-2}} \right)
\end{eqnarray*}
which is in contradiction with \eqref{lower bound1} since $\left(  -\frac{\gamma_0A_N}{C_k}\frac{\epsilon_n^{-2c}}{\left| (1-c) \right|^{N-2}} +\frac{1}{\left|\beta^2\right|^{N-2}} \right)<1$ for $n$ big.
\end{proof}

\

\

\section{Proof of Theorem \ref{theorem politopo}}
For every $x\in\mathbb R^N$ and $h\in [2,N]$ we set $x=(x_1,\ldots,x_N)=(z,x'),$ where $z:=(x_1,\ldots, x_h)\in \mathbb R^h$ and $x':=(x_{h+1},\ldots, x_N)\in \mathbb R^{N-h}.$ \\
\\ In this section we prove the existence of a cluster solution to \eqref{eqScalare} having a positive bump in $0$ and $k$ negative bumps at the vertices of a regular polytope $\mathcal P$ centered  in $0$. \\
The proof is similar to the one of Theorem \ref{theorem poligono piano} (which is actually  a special case of Theorem \ref{theorem politopo} with $h=2$). For this reason we will only sketch it briefly, emphasizing the main differences.\\
\\
Let $Q_i:=(z_i,{\bf 0}), i=1,\ldots,k$  be the vertices of a convex regular polytope $\mathcal P$ in $\mathbb R^h$ centered in the origin and having radius $1$ and side $s,$ with $s\leq 1$ when $h>2$, $s<1$ when $h=2$,  we  make now the following
{\it{ansatz}}
\[v=w_0-\sum_{i=1}^kw_{P_i}+\psi_{\epsilon},\]
where
$P_i:=rQ_i\in\mathbb R^N,$   and
$r\in R_{\epsilon},$ where
\[R_{\epsilon}:=\left\{r >0\ : \ \frac{\beta^2}{s}\epsilon\log{\frac{1}{\epsilon^2}}<r <\frac{1}{2}\epsilon \left(\log{\frac{1}{\epsilon^2}}\right)^2
\right\}.\]
\\
Observe that with this choice of $r$ the point ${\bf P}_r:=(0,P_1,\ldots,P_k)\in \Gamma_{\epsilon}(\subset \mathbb R^{(k+1)N})$, where $\Gamma_{\epsilon}$ is the configuration set
introduced in \eqref{gammeepsilon}. Indeed, by the definition of $\Gamma_{\epsilon}$, ${\bf P}_r\in \Gamma_{\epsilon}$ if and only if
\[\left\{
\begin{array}{lr}
 \beta^2\epsilon\log{\frac{1}{\epsilon^2}}<r <\epsilon \left(\log{\frac{1}{\epsilon^2}}\right)^2\\
\beta^2\epsilon\log{\frac{1}{\epsilon^2}}<r|Q_i-Q_j| <\epsilon \left(\log{\frac{1}{\epsilon^2}}\right)^2\quad i\neq j.
\end{array}
\right.\]
and by construction $s\leq |Q_i-Q_j|\leq 2$ for $i\neq j,$ and by assumption $s\leq 1$.
\\
\\
Generalizing what we have done in the previous section, we may  assume that $\mathcal P$ is invariant by reflection with respect to the  hyperplanes $x_{1}x_{i_2}\dots x_{i_{h-1}},$ where $i_m\in \{2,\dots, h\}$.
The Sobolev space of symmetric functions in which to work is now
\[X:=\{v\in H^1(\mathbb R^N): v(z,x')=v(z,|x'|)=v(g z,|x'|)\ \ \forall g\in \mathcal G\},\]
where $\mathcal G$ is the Coxeter group of $\mathbb R^h$ associated to $\mathcal P,$ namely the symmetry group that leaves invariant $\mathcal P$
(and which contains  all the reflections with respect to the  hyperplanes $x_{1}x_{i_2}\dots x_{i_{h-1}},$ for $i_m\in\{2, \dots, h\}$).
Observe that $X=\{u\in H^1(\mathbb R^N): u(T_{g,h}(x))=u(x), \ \mbox{ for all }\  g\in \mathcal G, h\in\mathcal R\},$ where
\[T_{g,h}:\mathbb R^N\rightarrow\mathbb R, \quad T_{g,h}(y)=T_{g,h}(z,y')=(gz,hy'),\]
and $\mathcal R$ is the group of the rotation matrix in $\mathbb R^{N-h}.$
 Moreover, by Lemma \ref{invact} the functional $J_{\epsilon}$ is invariant under the action of the group $\{T_{g,h}:\ g \in \mathcal G, \, h\in\mathcal R\},$ namely $J_{\epsilon}(u(T_{g,h}(x)))=J_{\epsilon}(u(x)).$\\

The analogous of Lemma \ref{LemmaAusiliariaSimmetrico1} is now the following (we omit the proof)
\begin{lemma}\label{LemmaAusiliariaSimmetrico2}
Fix $\tau=\beta^4(1+\sigma)$. Provided $\epsilon>0$ is sufficiently small, for every $r\in R_{\epsilon}$ there is a unique pair $(\psi_{\bf P_r}, c_{1, 1}({\bf P_r}))\in H^2_{*,r,s}\times \mathbb R$ which solve 
\begin{equation} \label{ausiliariaRidottaPersimmetria}
\left\{
\begin{array}{lr}
S_{\epsilon}[w_{\bf P_r}+\psi]=c_{1, 1}\sum_{\substack{i=1,\dots, k\\j=1,\dots,h}} \alpha_{i,j} Z_{P_{i}, j},\\
\psi\in H^2_{*, r,s},\,\, \langle \psi, Z_{{P}_{i}, j}\rangle=0,\,\,\ i=1, \ldots, \ell,\,\, j=1, \ldots, N.
\end{array}
\right.,
\end{equation}
where $\alpha_{i,j}$ are known numbers.
Moreover
\[
\|\psi_{\bf P_r}\|_{*,{\bf P}_r}<\epsilon^{\tau}; \quad (\psi_{\bf P_r}, \psi_{\bf P_r})_{\epsilon}\leq \epsilon^{N+2\tau}
\]
the map ${r}\in \bar{R}_{\epsilon}\longmapsto \psi_{\bf P_r}\in H^1(\mathbb R^N)$ is $C^1$ and the map ${r}\longmapsto c_{1, 1}(\bf P_r)\in \mathbb R$ is continuous.
\end{lemma}
$\;$\\
We set, for $\epsilon>0$ sufficiently small, the one variable function
$$\widetilde M_{\epsilon}[r]:=M_{\epsilon}[{\bf P}_r]$$
where $M_{\epsilon}$ is the reduced functional defined in \eqref{funzioRidot}, and, similarly as in Section \ref{theorem poligono piano} we can prove Lemma \ref{lemmarelazione punti critici ridotto e funzionale}.
Hence in order to conclude the proof we need to find a critical point for the reduced functional.\\

The reduced functional reduces  to the following
\begin{proposition}
For $\epsilon>0$ sufficiently small, if $s<1$
\begin{equation}\label{ridottoSimmetricoPolitopo1}
\widetilde M_{\epsilon}[r]= (qk+o(1)) \alpha_{\epsilon, C_{\mathcal P}}\left(\frac{r}{\epsilon}s\right)+ O(\epsilon^{2\tau}),
\end{equation}
if $s=1$ and $q\neq 2,$
\begin{equation}\label{ridottoSimmetricoPolitopo2}
\widetilde M_{\epsilon}[r]= ((q-2)k+o(1)) \alpha_{\epsilon, C_{\mathcal P}'}\left(\frac{r}{\epsilon}\right)+ O(\epsilon^{2\tau}),
\end{equation}
uniformly for  $r>0$ such that ${\bf P}_r\in \bar{\Gamma}_{\epsilon},$ where $q$ denotes the number of vertices $Q_i$ which are one side away from $Q_1,$ $C_{\mathcal P}, C_{\mathcal P}' $ are positive constants and $\alpha_{\epsilon, C_{\mathcal P}}$ is the function defined in Lemma \ref{Lemma Massimo Ridotto}.
\end{proposition}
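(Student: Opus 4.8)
The plan is to repeat, almost verbatim, the computation of the planar case \eqref{ridottoSimmetricoPoligono}; the only genuinely new ingredient is the combinatorics of the distances between the vertices of $\mathcal{P}$. Applying Proposition~\ref{ridotto generale} to the configuration ${\bf P}_r=(0,rQ_1,\dots,rQ_k)$ with $\lambda_0=+1$ and $\lambda_i=-1$ for $i=1,\dots,k$, I note that the $2k$ ordered pairs involving the central peak have $\lambda_0\lambda_i=-1$ and mutual distance $r|Q_i|=r$ (the polytope has radius $1$), while the pairs $(i,j)$ with $i,j\ge1$, $i\ne j$, have $\lambda_i\lambda_j=+1$ and distance $r|Q_i-Q_j|$. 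Since the Coxeter group $\mathcal{G}$ acts transitively on the vertices of $\mathcal{P}$, the quantity $\sum_{j\ne i}g(|Q_i-Q_j|)$ is independent of $i$ for every $g$, so $\sum_{i\ne j,\;i,j\ge1}g(|Q_i-Q_j|)=k\sum_{j=2}^{k}g(|Q_1-Q_j|)$. Hence Proposition~\ref{ridotto generale} gives
\begin{align*}
M_\epsilon[r]&=(\gamma_0+o(1))\Big(2k\,w\big(\tfrac r\epsilon\big)-k\sum_{j=2}^{k}w\big(\tfrac r\epsilon|Q_1-Q_j|\big)\Big)\\
&\quad+\epsilon^2(C_2+o(1))\,k\Big(\tfrac\epsilon r\Big)^{N-2}\Big(2+\sum_{j=2}^{k}\frac1{|Q_1-Q_j|^{N-2}}\Big)+O(\epsilon^{2\tau}).
\end{align*}

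Next I would isolate the leading exponential term. Exactly $q$ of the vertices $Q_j$ are one side away from $Q_1$, hence lie at distance $s$, and since $s$ is the minimal distance between two vertices of a convex regular polytope, all the remaining vertices satisfy $|Q_1-Q_j|>s$. On $\bar{\Gamma}_\epsilon$ one has $r/\epsilon\ge\beta^2\log\frac1{\epsilon^2}\to+\infty$, so the asymptotics \eqref{dec} yield $w(tr/\epsilon)/w(sr/\epsilon)\sim(t/s)^{-(N-1)/2}e^{-(t-s)r/\epsilon}\to0$ for every fixed $t>s$, uniformly since only finitely many distances occur; therefore $\sum_{j=2}^{k}w(|Q_1-Q_j|r/\epsilon)=(q+o(1))\,w(sr/\epsilon)$. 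The central-peak term $2k\,w(r/\epsilon)$ sits at distance $1$: when $s<1$ it is $o(w(sr/\epsilon))$ and is absorbed into the error, while when $s=1$ it merges with the neighbour terms. In the Poisson double sum there is no cancellation of orders, every distance contributing at the same polynomial rate, so that sum simply collapses to $2+\sum_{j=2}^{k}|Q_1-Q_j|^{-(N-2)}$ times $\epsilon^2 k(\epsilon/r)^{N-2}$.

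Collecting these facts finishes the computation. When $s<1$, using $(\epsilon/r)^{N-2}=s^{N-2}\big(\epsilon/(rs)\big)^{N-2}$ and factoring out $qk$, the expansion becomes \eqref{ridottoSimmetricoPolitopo1} with
$$C_{\mathcal{P}}:=\frac{C_2\,s^{N-2}}{q}\Big(2+\sum_{j=2}^{k}\frac1{|Q_1-Q_j|^{N-2}}\Big)>0;$$
when $s=1$ the coefficient of $-w(r/\epsilon)$ equals $(q-2)k$ and, factoring it out, one obtains \eqref{ridottoSimmetricoPolitopo2} with $C_{\mathcal{P}}':=\frac{C_2}{q-2}\big(2+\sum_{j=2}^{k}|Q_1-Q_j|^{-(N-2)}\big)>0$; the remainder $O(\epsilon^{2\tau})$ is inherited unchanged from Proposition~\ref{ridotto generale}. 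I expect the only delicate point to be precisely the case $s=1$: there the attractive central peak and the repulsive neighbour peaks live at the same scale and partially cancel, so one must check that the surviving coefficient $(q-2)k$ is strictly positive, so that Lemma~\ref{Lemma Massimo Ridotto} can later be invoked to produce a maximum. This is the meaning of the hypothesis $q\ne2$, which for $h>2$ holds automatically, every vertex of an $h$-polytope having at least $h\ge3$ neighbours, and for $h=2$ with $s=1$ would force the regular hexagon $q=2$ and is therefore ruled out. Everything else — the symmetric Lyapunov--Schmidt reduction in the space $X$, the continuity and $H^1$-bound of $\psi_{{\bf P}_r}$, and the admissible range $R_\epsilon$ of $r$ — goes through exactly as in Sections~\ref{section linearized}--\ref{Section finite reduction} and in the proof of Theorem~\ref{theorem poligono piano}.
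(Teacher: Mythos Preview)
Your argument is correct and follows essentially the same route as the paper: apply Proposition~\ref{ridotto generale}, use vertex-transitivity to collapse the double sum, isolate the $q$ neighbours at minimal distance $s$, absorb the remaining distances via the exponential decay~\eqref{dec}, and split into the cases $s<1$ and $s=1$. Your explicit constants $C_{\mathcal P}$ and $C_{\mathcal P}'$ agree with the paper's, and your additional remark on why $q\neq2$ when $s=1$ is exactly the content of the paper's Remark following the proposition.
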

\begin{remark}Under the assumptions of Theorem \ref{theorem politopo} $q\neq 2$ when $s=1,$ indeed $q\geq h>2.$
\end{remark}
\begin{proof}
For ${r}$ such that ${\bf P}_r\in \bar{\Gamma}_{\epsilon}$ the reduced functional becomes
\begin{eqnarray*}
\widetilde M_{\epsilon}[r]&=& - (\gamma_0+o(1))\left( -2k w\left(\frac{r}{\epsilon}\right)+
\sum_{i\neq j}w\left(\frac{r}{\epsilon}|Q_i-Q_j|\right)\right) \\
&&+\epsilon^{2}(C_2+o(1))\frac{1}{\left|\frac{r}{\epsilon}\right|^{N-2}}\left(2k+\sum_{i\neq j}\frac{1}{|Q_i-Q_j|^{N-2}}\right)
+ O(\epsilon^{2\tau})\\
&=& - k(\gamma_0+o(1))\left( -2 w\left(\frac{r}{\epsilon}\right)+
\sum_{i=2}^kw\left(\frac{r}{\epsilon}|Q_1-Q_i|\right)\right)\\
&&+\epsilon^{2}k(C_2+o(1))\frac{1}{\left|\frac{r}{\epsilon}\right|^{N-2}}\left(2+\sum_{i=2}^k\frac{1}{|Q_1-Q_i|^{N-2}}\right)
+ O(\epsilon^{2\tau})\\
& =&
 - k(\gamma_0+o(1))\left( -2 w\left(\frac{r}{\epsilon}\right)+ q w\left(\frac{r}{\epsilon}s\right)+
\sum_{\{i: s_i>s\}}w\left(\frac{r}{\epsilon}s_i\right)\right)\\
&&+\epsilon^{2}k(C_2+o(1))\frac{1}{\left|\frac{r}{\epsilon}\right|^{N-2}}\left(2+q\frac{1}{s^{N-2}}+\sum_{\{i: s_i>s\}}\frac{1}{s_i^{N-2}}\right)
+ O(\epsilon^{2\tau})
\end{eqnarray*}
where we set
\[s_i:=|Q_i-Q_{1}|.\]
From the exponential decay of $w$ it follows that for  $s_i>s$
\[w\left(\frac{r}{\epsilon}s_i\right)=o\left(w\left(\frac{r}{\epsilon}s\right)\right),\ \ \ \mbox{ as }\frac{r}{\epsilon}\rightarrow +\infty,\]
hence
\[\widetilde M_{\epsilon}[r]= - k(\gamma_0+o(1))\left( -2 w\left(\frac{r}{\epsilon}\right)+ q w\left(\frac{r}{\epsilon}s\right)\right) +\epsilon^{2}k(C_2+o(1))\frac{1}{\left|\frac{r}{\epsilon}\right|^{N-2}}\left(2+q\frac{1}{s^{N-2}}+\sum_{\{i: s_i>s\}}\frac{1}{s_i^{N-2}}\right)
+ O(\epsilon^{2\tau}).\]
If $s<1$ we have
\[w\left(\frac{r}{\epsilon}\right)=o\left(w\left(\frac{r}{\epsilon}s\right)\right),\ \ \mbox{ as }\frac{r}{\epsilon}\rightarrow +\infty.\]
As a consequence the reduced functional becomes
\[
\widetilde M_{\epsilon}[r]=-(qk \gamma_0+o(1))w\left(\frac{r}{\epsilon}s \right)+\epsilon^2 \left(qkC_{\mathcal P}+o(1)\right)     \frac{1}{\left|\frac{r}{\epsilon}s\right|^{N-2}} + O(\epsilon^{2\tau}),
\]
where ${C_{\mathcal P}}:=C_2\left(1+\frac{2}{q}s^{N-2}+\frac{1}{q}\sum_{\{i: s_i>s\}}\frac{s^{N-2}}{s_i^{N-2}}\right).$
\\
\\
While if $s=1$ then
\[
\widetilde M_{\epsilon}[r]=-((q-2)k \gamma_0+o(1))w\left(\frac{r}{\epsilon} \right)+\epsilon^2 \left(qkC_{\mathcal P}+o(1)\right)     \frac{1}{\left|\frac{r}{\epsilon}\right|^{N-2}} + O(\epsilon^{2\tau}).\]
\end{proof}

The following result (which can be proved similarly as Proposition \ref{proofMax}) concludes the proof
\begin{proposition}Assume $s\leq 1,$ $h>2$ or $s<1$ $h=2.$ For $\epsilon >0$ sufficiently small, the following maximization problem
\[\max\{\widetilde M_{\epsilon}[r]\ : \ r \in \bar R_{\epsilon}\}\]
has a solution ${r}_{\epsilon}\in R_{\epsilon}.$
Furthermore \[\lim_{\epsilon\rightarrow 0}\frac{r_{\epsilon}s}{\epsilon\log{\frac{1}{\epsilon^2}}}=1.\]
\end{proposition}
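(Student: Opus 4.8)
The plan is to reproduce the energy-comparison argument of Proposition~\ref{proofMax}, distinguishing the two regimes allowed by the hypothesis. When $s<1$ (either value of $h$) the reduced functional is given by \eqref{ridottoSimmetricoPolitopo1}, and the argument is a verbatim copy of the one for Theorem~\ref{theorem poligono piano}, with the side $s$ playing the role of $\beta_k$. When $s=1$ the hypothesis forces $h>2$, hence $q\ge h>2$, so in \eqref{ridottoSimmetricoPolitopo2} the coefficient $(q-2)k$ is \emph{strictly positive}; this positivity is exactly what is needed for Lemma~\ref{Lemma Massimo Ridotto} to describe $M_\epsilon$. Since $M_\epsilon$ is continuous on the compact interval $\bar R_\epsilon$, a maximiser $r_\epsilon$ exists; it remains to show $r_\epsilon\in R_\epsilon$ and $r_\epsilon s/(\epsilon\log\frac{1}{\epsilon^2})\to1$.

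First I would obtain a lower bound for $M_\epsilon[r_\epsilon]$ by testing at the explicit competitor $s_\epsilon:=\epsilon\rho_\epsilon/s$ (and $s_\epsilon:=\epsilon\rho_\epsilon$ when $s=1$), $\rho_\epsilon$ being the maximum point of $\alpha_{\epsilon,C_{\mathcal P}}$ (resp. $\alpha_{\epsilon,C_{\mathcal P}'}$) furnished by Lemma~\ref{Lemma Massimo Ridotto}. Its asymptotics $\rho_\epsilon=\log\frac{1}{\epsilon^2}+\frac{N-1}{2}\log\log\frac{1}{\epsilon^2}+o(\log\log\frac{1}{\epsilon^2})$ guarantee $\beta^2\log\frac{1}{\epsilon^2}<\rho_\epsilon<\frac{1}{2}s(\log\frac{1}{\epsilon^2})^2$, so $s_\epsilon\in R_\epsilon$, and plugging into \eqref{ridottoSimmetricoPolitopo1} together with $\alpha_{\epsilon,C_{\mathcal P}}(\rho_\epsilon)=C_{\mathcal P}\epsilon^2(\log\frac{1}{\epsilon^2})^{-(N-2)}(1+o(1))$ gives
\[
M_\epsilon[r_\epsilon]\ \ge\ M_\epsilon[s_\epsilon]\ =\ \epsilon^2\bigl(qkC_{\mathcal P}+o(1)\bigr)\,\frac{1}{(\log\frac{1}{\epsilon^2})^{N-2}}
\]
(and the same with $(q-2)kC_{\mathcal P}'$ in place of $qkC_{\mathcal P}$ when $s=1$). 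In particular the right-hand side is positive and of strictly larger order than the remainder $O(\epsilon^{2\tau})$, since $\tau=\beta^4(1+\sigma)>1$ for $\beta$ close to $1$.

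Next I would show $t_\epsilon:=r_\epsilon s/(\epsilon\log\frac{1}{\epsilon^2})\to1$, arguing by contradiction along $\epsilon_n\to0$. If $t_{\epsilon_n}\ge1+c$, dropping the negative $w$-term in \eqref{ridottoSimmetricoPolitopo1} and using monotonicity of $\rho\mapsto\rho^{-(N-2)}$ yields $M_{\epsilon_n}[r_{\epsilon_n}]\le\epsilon_n^2(qkC_{\mathcal P}+o(1))(1+c)^{-(N-2)}(\log\frac{1}{\epsilon_n^2})^{-(N-2)}+O(\epsilon_n^{2\tau})$, contradicting the lower bound. If $t_{\epsilon_n}\le1-c$, inserting the asymptotics \eqref{dec} of $w$ (and that $x\mapsto e^{x}x^{-(N-3)/2}$ is nondecreasing for $x$ large, as in the proof of Proposition~\ref{proofMax}) together with $\rho:=\frac{r_{\epsilon_n}}{\epsilon_n}s>s^2\log\frac{1}{\epsilon_n^2}$ and $e^{-\rho}\ge\epsilon_n^{2(1-c)}$ gives
\[
M_{\epsilon_n}[r_{\epsilon_n}]\ \le\ \epsilon_n^2\bigl(qkC_{\mathcal P}+o(1)\bigr)\,\frac{1}{(\log\frac{1}{\epsilon_n^2})^{N-2}}\left(-\frac{\gamma_0A_N}{C_{\mathcal P}}\,\frac{\epsilon_n^{-2c}}{(1-c)^{N-2}}+\frac{1}{s^{2(N-2)}}\right),
\]
whose bracket tends to $-\infty$, again a contradiction; for $s=1$ one repeats this verbatim using \eqref{ridottoSimmetricoPolitopo2}. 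Once $t_\epsilon\to1$ is known, interiority $r_\epsilon\in R_\epsilon$ follows at once when $s<1$, because then $r_\epsilon=\frac{\epsilon}{s}\log\frac{1}{\epsilon^2}\,(1+o(1))$ and $1/s>s$ leaves a fixed gap from both endpoints of $R_\epsilon$. When $s=1$ one checks interiority separately: using that $\rho_\epsilon$ is the \emph{unique} maximum of $\alpha_{\epsilon,C_{\mathcal P}'}$, the boundary values $M_\epsilon[\epsilon\log\frac{1}{\epsilon^2}]$ and $M_\epsilon[\frac{1}{2}\epsilon(\log\frac{1}{\epsilon^2})^2]$ are strictly smaller than $M_\epsilon[s_\epsilon]$ — at the lower endpoint because the $-\gamma_0 w$ term dominates for $N\ge4$ (a direct comparison of the constants settling $N=3$), at the upper endpoint because $w$ decays exponentially there. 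Finally, when $s=1$ the limit $t_\epsilon\to1$ is precisely the asserted $r_\epsilon/(\epsilon\log\frac{1}{\epsilon^2})\to1$.

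The step I expect to demand the most care — beyond faithfully transcribing the computations of Proposition~\ref{proofMax} — is the case $s=1$: there the lower endpoint of $R_\epsilon$, in the rescaled variable, sits only a $\log\log$-factor below the expected location $\rho_\epsilon$ of the maximum, so the crude limit $t_\epsilon\to1$ no longer places $r_\epsilon$ automatically in the \emph{open} set $R_\epsilon$, and one must bring in either the finer expansion of $\rho_\epsilon$ or the strict monotonicity of $\alpha_{\epsilon,C_{\mathcal P}'}$ below its maximum. Everything else — existence of the maximiser, admissibility of $s_\epsilon$, and the two one-line upper bounds — is routine.
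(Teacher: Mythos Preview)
Your proposal is correct and follows exactly the route the paper indicates: the paper gives no independent argument here but simply says the proposition ``can be proved similarly as Proposition~\ref{proofMax},'' and your write-up is precisely that transcription, with $s$ in place of $\beta_k$ and the constants $qkC_{\mathcal P}$ (resp.\ $(q-2)kC_{\mathcal P}'$) in place of $2kC_k$. Your observation about the case $s=1$ is in fact sharper than the paper: in Proposition~\ref{proofMax} one has $\beta_k<1$ strictly, so the limit $r_\epsilon\beta_k/(\epsilon\log\frac{1}{\epsilon^2})\to1$ automatically forces $r_\epsilon$ into the open interval $R_\epsilon$, whereas for $s=1$ the lower endpoint of $R_\epsilon$ sits at the same leading order as the expected maximiser and the extra boundary comparison you outline (using the $\frac{N-1}{2}\log\log$ correction in $\rho_\epsilon$, or equivalently the strict inequality $\alpha_{\epsilon,C_{\mathcal P}'}(\log\frac{1}{\epsilon^2})<\alpha_{\epsilon,C_{\mathcal P}'}(\rho_\epsilon)$) is genuinely needed --- the paper's blanket reference to Proposition~\ref{proofMax} glosses over this point.
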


\

\section*{Appendix}

\begin{proof}[Proof of Lemma \ref{LemmaAusiliariaSimmetrico1}]
Let $\bf P=\bf P_r$ be as in \eqref{ansatzSimmetrico1} and $H^2_{\ast,r,s}$ be the space of symmetric functions defined in \eqref{spazioSimmetrico1}.
\\
Then proceeding as in Sects. \ref{section linearized} and \ref{Section finite reduction}, we obtain the analogous of see Lemma \ref{fdr} in the symmetric case, namely  for $\epsilon$ small enough we find a unique solution $(\psi_{{\bf P}_r}, c_{ij}({\bf P}_r))\in H^2_{\ast, r, s}\times \mathbb R^{N(k+1)}$ to problem 
\begin{equation}\label{AusEq}
\left\{
\begin{array}{lr}
S_{\epsilon}[w_{\bf P}+\psi]=\sum_{i, j}c_{i, j}Z_{P_{i}, j},\\
\psi\in H^2_{*, {\bf P}},\,\, \langle \psi, Z_{P_{i}, j}\rangle=0,\,\,\ i=1, \ldots, k+1,\,\, j=1, \ldots, N.
\end{array}
\right.
\end{equation}
Following \cite{DelPinoFelmerMusso} we show that the right hand side in \eqref{AusEq} reduces because of the symmetries of $\psi_{{\bf P}}$ and the symmetries in the choice of $\bf P$.\\\\
Let us first analyze these symmetries. Observe that 
 $w_{{\bf P}},\psi_{{\bf P}}$ are even with respect to $x_h$, $h=2,\dots,N$, namely
\begin{equation}\label{simmetriaRispettoxh} w_{{\bf P}}(\dots, x_h,\dots)=w_{{\bf P}}(\dots, -x_h,\dots), \  \ \psi_{{\bf P}}(\dots, x_h,\dots)=\psi_{{\bf P}}(\dots, -x_h,\dots), \ \ h=2,\dots,N.
\end{equation}
Moreover $w_{{\bf P}},\psi_{{\bf P}}$ are invariant by the following rotation: 
\begin{equation}\label{simmetriaRispRota}
w_{{\bf P}}(z, x')=w_{{\bf P}}(ze^{2\pi\sqrt{-1}/k}, x'), \qquad \ \psi_{{\bf P}}(z, x')=\psi_{{\bf P}}(ze^{2\pi\sqrt{-1}/k}, x')
\end{equation}
 \\
 \\
 Moreover each $w_{{P_i}}$, $i=1,\dots,k+1$ is even with respect to $x_h$, $h=3,\dots,N$, hence an easy computation shows that
%
%
%
%
\begin{equation}\label{simmPihmagg3}
\left\{\begin{array}{lr}
\mbox{ for } h=3,\dots, N, \  i=1,\dots, k+1\\
Z_{P_i,j}(\dots, x_h,\dots)=\left\{\begin{array}{ll} -Z_{P_i,j}(\dots, -x_h,\dots)&\ \mbox{ if }j=h\\Z_{P_i,j}(\dots, -x_h,\dots)&\ \mbox{ if }j=1\dots, N; \ j\neq h \end{array} \right.,
\end{array}\right.
\end{equation}  
while, for $h=2$, only $w_{P_1}$ and $w_{P_{k+1}}$ are even with respect to $x_2$, and so 
\begin{equation}\label{simmx2prima}
\left\{\begin{array}{lr}
i\in\{1,k+1\}\\
Z_{P_i,j}(x_1, x_2,x')=\left\{\begin{array}{ll} -Z_{P_i,j}(x_1, -x_2,x')&\ \mbox{ if }j=2\\Z_{P_i,j}(x_1, -x_2,x')&\ \mbox{ if } \ j=1\dots, N;\ j\neq 2\end{array}\right..
\end{array}\right. 
\end{equation} 
While, for $i=2,\dots, k$, clearly $w_{{P_i}}$ is not even with respect to $x_2$, anyway,  by the choice of the configuration $\bf P$, the point $P_{k+2-i}$ turns out to be the symmetric of $P_i$ through the reflection with respect to the $x_1$-axis, and so it is not difficult to see that  
\begin{equation}\label{simmx2seconda}
\left\{\begin{array}{lr} i=2,\dots, k\\
Z_{P_i,j}(x_1, x_2,x')=\left\{\begin{array}{ll} -Z_{P_{k+2-i},j}(x_1, -x_2,x')&\ \mbox{ if }j=2\\
Z_{P_{k+2-i},j}(x_1, -x_2,x')&\ \mbox{ if } \ j=1\dots, N ;\ j\neq 2 \end{array}  \right.
\end{array}\right.
\end{equation}  
$\;$
\\
About the rotation, let us observe that, because of the symmetry in the choice of $\bf P$:
\begin{equation}\label{simmZRota} 
\begin{array}{lr}
Z_{P_i,1}(ze^{2\pi\sqrt{-1}/k}, x')=\cos{\frac{2\pi}{k}} Z_{P_{i-1},1}(z, x')- \sin{\frac{2\pi}{k}} Z_{P_{i-1},2}(z, x')\\
Z_{P_i,2}(ze^{2\pi\sqrt{-1}/k}, x')=\sin{\frac{2\pi}{k}} Z_{P_{i-1},1}(z, x')+ \cos{\frac{2\pi}{k}} Z_{P_{i-1},2}(z, x')
\end{array}\ \mbox{ for } i=1,\dots, k
\end{equation}
with the convention that $P_0=P_k$, while 
\begin{equation}\label{simmZRota2} 
\begin{array}{lr}Z_{P_{k+1},1}(ze^{2\pi\sqrt{-1}/k}, x')=\cos{\frac{2\pi}{k}} Z_{P_{k+1},1}(z, x')- \sin{\frac{2\pi}{k}} Z_{P_{k+1},2}(z, x')\\
Z_{P_{k+1},2}(ze^{2\pi\sqrt{-1}/k}, x')=\sin{\frac{2\pi}{k}} Z_{P_{k+1},1}(z, x')+ \cos{\frac{2\pi}{k}} Z_{P_{k+1},2}(z, x')
\end{array}.
\end{equation}
\\
 \\
We are now ready to prove the result. By \eqref{AusEq}, \eqref{simmetriaRispettoxh} and \eqref{simmPihmagg3} it follows that, for $h=3,\dots, N$
\begin{eqnarray*}\sum_{\substack{i=1,\dots, k+1\\j=1,\dots,N}}c_{i, j}Z_{P_{i}, j}(\dots, x_h,\dots)
& \stackrel{\eqref{AusEq}, \eqref{simmetriaRispettoxh}}{ =} & \sum_{\substack{i=1,\dots, k+1\\j=1,\dots,N}}c_{i, j}Z_{P_{i}, j}(\dots, -x_h,\dots)\\
&\stackrel{\eqref{simmPihmagg3}}{=}& \sum_{\substack{i=1,\dots, k+1\\j=1,\dots,N\\ j\neq h}}c_{i, j}Z_{P_{i}, j}(\dots, x_h,\dots)-\sum_{\substack{i=1,\dots, k+1}}c_{i, h}Z_{P_{i}, h}(\dots, x_h,\dots),
\end{eqnarray*} 
namely
\[\sum_{\substack{i=1,\dots, k+1}}c_{i, h}Z_{P_{i}, h}=0,\quad \mbox{ for } h=3,\dots, N.\]
Multiplying by $\frac{\partial w_{P_m}}{\partial x_h}$, $m=1,\dots, k+1$, $h=3,\dots, N$, integrating and using \eqref{prodotto} we then obtain that
\[c_{i, h}=0, \quad \mbox{ for }  i=1,\dots, k+1; \  \ h=3,\dots, N.\]
So  the right hand side in \eqref{AusEq} reduces to
\begin{equation}\label{primariduzione}
S_{\epsilon}[w_{\bf P}+\psi]=\sum_{\substack{i=1,\dots, k+1\\j=1,\dots,N}}c_{i, j}Z_{P_{i}, j}=\sum_{\substack{i=1,\dots, k+1}}
\left(c_{i, 1}Z_{P_{i}, 1}+c_{i, 2}Z_{P_{i}, 2}\right).
\end{equation}
By  \eqref{primariduzione}, \eqref{simmetriaRispRota} and \eqref{simmZRota}, \eqref{simmZRota2}  we then have (recall that by our convention $P_0=P_k$)
\begin{eqnarray*}
&&\sum_{\substack{i=1,\dots, k+1}}
\left(c_{i, 1}Z_{P_{i}, 1}(z,x')+c_{i, 2}Z_{P_{i}, 2}(z,x')\right)
\\
&&\qquad \qquad \stackrel{\eqref{primariduzione}, \eqref{simmetriaRispRota}}{=}\sum_{\substack{i=1,\dots, k+1}}
\left(c_{i, 1}Z_{P_{i}, 1}(ze^{2\pi\sqrt{-1}/k},x')+c_{i, 2}Z_{P_{i}, 2}(ze^{2\pi\sqrt{-1}/k},x')\right)
\\
&&\qquad \qquad \stackrel{\eqref{simmZRota}, \eqref{simmZRota2}}{=}\sum_{\substack{i=1,\dots, k}} 
\left(
c_{i, 1} \cos{\frac{2\pi}{k}} + c_{i, 2} \sin{\frac{2\pi}{k}}
\right)
Z_{P_{i-1}, 1}\ + \
\left(-c_{i, 1} \sin{\frac{2\pi}{k}} + c_{i, 2} \cos{\frac{2\pi}{k}}\right)
Z_{P_{i-1}, 2}\\
&&\qquad\qquad\qquad +\quad\qquad  \left(
c_{k+1, 1} \cos{\frac{2\pi}{k}} + c_{k+1, 2} \sin{\frac{2\pi}{k}}
\right)
Z_{P_{k+1}, 1}\ + \
\left(-c_{k+1, 1} \sin{\frac{2\pi}{k}} + c_{k+1, 2} \cos{\frac{2\pi}{k}}\right)
Z_{P_{k+1}, 2}.
\end{eqnarray*}
Namely
\begin{eqnarray*}
&& 
\sum_{\substack{i=1,\dots, k}}\left[c_{i-1,1}-  \left(c_{i, 1} \cos{\frac{2\pi}{k}} + c_{i, 2} \sin{\frac{2\pi}{k}}
\right)\right]
Z_{P_{i-1}, 1}\ +\ \left[c_{i-1,2}-\left(-c_{i, 1} \sin{\frac{2\pi}{k}} + c_{i, 2} \cos{\frac{2\pi}{k}}\right)\right] Z_{P_{i-1}, 2}
\\
&&\qquad + \ \left[c_{k+1,1}-  \left(c_{k+1, 1} \cos{\frac{2\pi}{k}} + c_{k+1, 2} \sin{\frac{2\pi}{k}}
\right)\right]
Z_{P_{k+1}, 1}
\\
&&\qquad +\ \left[c_{k+1,2}-\left(-c_{k+1, 1} \sin{\frac{2\pi}{k}} + c_{k+1, 2} \cos{\frac{2\pi}{k}}\right)\right] Z_{P_{k+1}, 2}
\\
&&\ = \ 0.
\end{eqnarray*}
Multiplying by $\frac{\partial w_{P_m}}{\partial x_h}$, $m=1,\dots, k+1$, $h=1,2$, integrating and using \eqref{prodotto} we then obtain that
\[
\left\{
\begin{array}{lr}
c_{i-1,1} = c_{i, 1} \cos{\frac{2\pi}{k}} + c_{i, 2} \sin{\frac{2\pi}{k}}
\\
c_{i-1,2} = -c_{i, 1} \sin{\frac{2\pi}{k}} + c_{i, 2} \cos{\frac{2\pi}{k}}
\end{array}
\right., \ \ i=1,\dots, k
\]
and
\[
\left\{
\begin{array}{lr}
c_{k+1,1} = c_{k+1, 1} \cos{\frac{2\pi}{k}} + c_{k+1, 2} \sin{\frac{2\pi}{k}}
\\
c_{k+1,2} = -c_{k+1, 1} \sin{\frac{2\pi}{k}} + c_{k+1, 2} \cos{\frac{2\pi}{k}}
\end{array}
\right.
\]
From this it follows easily that
\[
\left\{
\begin{array}{lr}
c_{i,1} = c_{1, 1} \cos{\frac{2\pi (i-1)}{k}} - c_{1, 2} \sin{\frac{2\pi(i-1)}{k}}
\\
c_{i,2} = c_{1, 1} \sin{\frac{2\pi(i-1)}{k}} + c_{1, 2} \cos{\frac{2\pi(i-1)}{k}}
\end{array}
\right., \ \ i=1,\dots, k
\]
and 
\[
\left\{
\begin{array}{lr}
c_{k+1,1} =0
\\
c_{k+1,2} = 0
\end{array}
\right.
\]
So, from  \eqref{primariduzione}, the right hand side in \eqref{AusEq} reduces again to
\begin{equation}\label{secondariduzione}
S_{\epsilon}[w_{\bf P}+\psi]=\sum_{\substack{i=1,\dots, k}}
\left[\left(c_{1, 1} \cos{\frac{2\pi (i-1)}{k}} - c_{1, 2} \sin{\frac{2\pi(i-1)}{k}}\right)Z_{P_{i}, 1}+\left(c_{1, 1} \sin{\frac{2\pi(i-1)}{k}} + c_{1, 2} \cos{\frac{2\pi(i-1)}{k}} \right)Z_{P_{i}, 2}\right].
\end{equation}
Last we use the symmetry with respect to the $x_2$-variable. By \eqref{secondariduzione}, \eqref{simmx2prima} and \eqref{simmx2seconda}
\begin{eqnarray*}
S_{\epsilon}[w_{\bf P}(x_1,-x_2,x')+\psi(x_1,-x_2,x')] &= & c_{1,1} Z_{P_{1}, 1}(x_1,x_2,x') -c_{1,2}Z_{P_{1}, 2}(x_1,x_2,x')
\\ 
&+& \sum_{\substack{i=2,\dots, k}}
\left[\left(c_{1, 1} \cos{\frac{2\pi (i-1)}{k}} - c_{1, 2} \sin{\frac{2\pi(i-1)}{k}}\right)Z_{P_{k+2-i}, 1}(x_1,x_2,x')\ + \right.\\
&&\qquad\qquad\left.
-\ \left(c_{1, 1} \sin{\frac{2\pi(i-1)}{k}} + c_{1, 2} \cos{\frac{2\pi(i-1)}{k}} \right)Z_{P_{k+2-i}, 2}(x_1,x_2,x')\right]
\\ 
&= &  c_{1,1} Z_{P_{1}, 1}(x_1,x_2,x') -c_{1,2}Z_{P_{1}, 2}(x_1,x_2,x')
\\
&+& \sum_{\substack{i=2,\dots, k}}
\left[\left(c_{1, 1} \cos{\frac{2\pi (k+1-i)}{k}} - c_{1, 2} \sin{\frac{2\pi(k+1-i)}{k}}\right)Z_{P_{i}, 1}(x_1,x_2,x')\ + \right.\\
&&\qquad\quad\left.
-\ \left(c_{1, 1} \sin{\frac{2\pi(k+1-i)}{k}} + c_{1, 2} \cos{\frac{2\pi(k+1-i)}{k}} \right)Z_{P_{i}, 2}(x_1,x_2,x')\right] \\
&= & \sum_{\substack{i=1,\dots, k}}
\left[\left(c_{1, 1} \cos{\frac{2\pi (i-1)}{k}} + c_{1, 2} \sin{\frac{2\pi (i-1)}{k}}\right)Z_{P_{i}, 1}(x_1,x_2,x')\ + \right.\\
&&\qquad\quad\left.
-\ \left(-c_{1, 1} \sin{\frac{2\pi (i-1)}{k}} + c_{1, 2} \cos{\frac{2\pi (i-1)}{k}} \right)Z_{P_{i}, 2}(x_1,x_2,x')\right] 
\end{eqnarray*}
where in the last equality we used that the angles $-\frac{2\pi (i-1)}{k}=\frac{2\pi(k+1-i)}{k}$. 
And so,  by the symmetry with respect to the $x_2$-variable:
\[S_{\epsilon}[w_{\bf P}(x_1,x_2,x')+\psi(x_1,x_2,x')] = S_{\epsilon}[w_{\bf P}(x_1,-x_2,x')+\psi(x_1,-x_2,x')],\]
namely
\begin{eqnarray*}
&&\sum_{\substack{i=1,\dots, k}}
\left[ c_{1, 2} \sin{\frac{2\pi (i-1)}{k}}Z_{P_{i}, 1}(x_1,x_2,x')\ 
 - c_{1, 2} \cos{\frac{2\pi (i-1)}{k}} Z_{P_{i}, 2}(x_1,x_2,x')\right] =0
\end{eqnarray*}
From which, multiplying by $\frac{\partial w_{P_1}}{\partial x_2 }$, integrating and using \eqref{prodotto} we get
\[c_{1,2}=0.\]
As a consequence from  \eqref{secondariduzione}, the right hand side in \eqref{AusEq} reduces again, and we obtain
\begin{equation}\label{terzariduzione}
S_{\epsilon}[w_{\bf P}+\psi]= 
c_{1, 1}
\sum_{\substack{i=1,\dots, k}}
\left( \cos{\frac{2\pi (i-1)}{k}} Z_{P_{i}, 1}\ + \ \sin{\frac{2\pi(i-1)}{k}} Z_{P_{i}, 2}\right).
\end{equation}
\end{proof}
%
%
%


\


\begin{thebibliography}{99}

\bibitem{A}
A. Ambrosetti, {\it  On Schr\"{o}dinger-Poisson systems}, Milan J. Math. 76,
(2008), 257--274.

\bibitem{AM}
A. Ambrosetti and A. Malchiodi, {\it Perturbation Methods and Semilinear Elliptic Problems
on $\mathbb R^N$,} Birkh\"{a}user Verlag, 2005.

\bibitem{AR}
A. Ambrosetti and D. Ruiz, {\it Multiple bound states for the Schr\"{o}dinger-Poisson problem},
Comm. Contemp. Math. 10 (2008), 391--404.

\bibitem{AzzolliniPomponio} A. Azzollini and A. Pomponio, {\it Ground state solutions for the nonlinear Schr\"odinger-Maxwell equations}, J. Math. Anal. Appl. 345 (2008), 90--108.
\bibitem{BF1}
V. Benci and D. Fortunato, {\it An eigenvalue problem for the Schr\"odinger-Maxwell equations}, Top. Meth. Nonlinear Anal. 11(2) (1998), 283--293.

\bibitem{BF2}
V. Benci and D. Fortunato, {\it Solitary waves of the nonlinear Klein-Gordon equation coupled with the Maxwell equations}, Rev. Math. Phys.
14(4) (2002), 409--420.

\bibitem{5}
R. Benguria, H. Brezis and E.H. Lieb, {\it The Thomas-Fermi-von Weizscker theory of atoms and molecules}, Comm. Math. Phys. 79 (1981) 167--180.

\bibitem{8}
I. Catto and P.L. Lions, {\it Binding of atoms and stability of molecules in Hartree and Thomas-Fermi type theories. Part 1: A necessary
and sufficient condition for the stability of general molecular system}, Comm. Partial Differential Equations 17 (1992) 1051--1110.


\bibitem{DaprileMugnai1} T. D'Aprile and D. Mugnai, {\it Non-Existence results for the coupled Klein-Gordon-Maxwell equations}, Adv. Nonlinear Stud. 4 (2004), 307--322.

\bibitem{DaprileMugnai2} T. D'Aprile and D. Mugnai, {\it Solitary waves for nonlinear Klein-Gordon-Maxwell and
Schr\"odinger-Maxwell equations}, Proc. Roy. Soc. Edinburgh Sect. 134A (2004), 893--906.

\bibitem{DP}
T. D'Aprile and A. Pistoia, {\it  On the number of sign-changing solutions of a semiclassical nonlinear Schr\"{o}dinger equation}, Adv. Differential Equations 12 (2007), no. 7, 737--758.



\bibitem{DW} T. D'Aprile and J. Wei, {\it Standing waves in the Maxwell-Schr\"odinger equation and an
optimal configuration problem}, Calc. Var. 25 (2005), 105--137.

\bibitem{DaprileWei2} T. D'Aprile and J. Wei, {\it On bound states concentrating on spheres for the Maxwell-Schr\"odinger equation}, SIAM J. Math. Anal. 37 (2005), 321--342.

\bibitem{DelPinoFelmerMusso}
M. del Pino, P. Felmer and M. Musso, {\it Multi-bubble solutions for slightly super-critical elliptic problems in domains with symmetries}, Bull. London Math. Soc. 35 (2003), 513--521.

\bibitem{GNN}
B. Gidas, W.M. Ni and L. Nirenberg, {\it Symmetry and related properties via
the maximum principle}, Comm. Math. Phys. 68 (1979), no. 3, 209--243.

\bibitem{I}
I. Ianni, {\it Sign-changing radial solutions for the Schr\"odinger-Poisson-Slater problem}, TMNA 41 (no. 2) (2013), 365--38.

\bibitem{K}
H. Kikuchi, {\it On the existence of solutions for elliptic system related to the Maxwell-Schr\"odinger equations}, Nonlinear. Anal. 67 (2007), 1445--1456.


\bibitem{14}
E.H. Lieb, {\it Thomas-Fermi and related theories and molecules}, Rev. Modern Phys. 53 (1981) 603--641.

\bibitem{15}
P.L. Lions, {\it Solutions of Hartree-Fock equations for Coulomb systems}, Comm. Math. Phys. 109 (1984) 33--97.

\bibitem{16}
P. Markowich, C. Ringhofer and C. Schmeiser, {\it Semiconductor Equations}, Springer-Verlag, New York, 1990.

\bibitem{Ruiz} D. Ruiz, {\it Semiclassical states for coupled Schr\"odinger-Maxwell equations: concentration
around a sphere}, M3AS 15 (2005), 141--164.

\bibitem{R}
D. Ruiz, {\it The Schr\"{o}dinger-Poisson equation under the effect of a nonlinear local term},
J. Funct. Anal. 237 (2006), 655--674.

\bibitem{Slater}
J.C. Slater, {\it A simplification of the Hartree-Fock method}, Phys. Review 81 (1951),
385--390.

\end{thebibliography}
\end{document}